\documentclass[11pt,reqno]{amsart}
\usepackage{amsmath,amssymb,mathrsfs,color,amsthm}
\usepackage[title]{appendix}
\usepackage{enumitem}
\usepackage{xcolor}
\usepackage[colorlinks=true,linkcolor=blue,citecolor=red,urlcolor=cyan]{hyperref}

\makeatother
\let\cal=\mathcal
\def\N{{\mathbb N}}

\def\R{{\mathbb R}}
\def\P{{\mathbb P}}
\def\E{{\mathbb E}}
\def\T{{\mathbb T}}

\def\var{\varepsilon}

\newtheorem{thm}{Theorem}[section]
\newtheorem{cor}[thm]{Corollary}
\newtheorem{lem}[thm]{Lemma}
\newtheorem{prop}[thm]{Proposition}

\theoremstyle{definition}
\newtheorem{de}[thm]{Definition}
\theoremstyle{remark}
\newtheorem{rem}[thm]{Remark}
\newtheorem{exam}[thm]{Example}
\numberwithin{equation}{section}
\newcommand{\Law}[1]{\mathcal{L}_{#1}}
\newcommand{\norm}[1]{\left\| #1 \right\|}
\newcommand{\inpro}[1]{\left\langle #1 \right\rangle}

\allowdisplaybreaks

\begin{document}

\title[Weak Averaging Principle and Weak Pullback Attractors]{Weak Averaging Principle and Weak Pullback Attractors for Mckean--Vlasov Stochastic Navier--Stokes Equations}

\author{Honglei Chen}
\address{H. Chen: School of Mathematical Sciences, Dalian University of Technology, Dalian
116024, P. R. China}
\email{hcl9712285@gmail.com}

\author{Zhenxin Liu}
\address{Z. Liu: School of Mathematical Sciences, Dalian University of Technology, Dalian
116024, P. R. China}
\email{zxliu@dlut.edu.cn}
   
\date{September 14, 2026}  

\subjclass[2020]{60H15; 35Q30, 37L30, 37L55.}

\keywords{McKean--Vlasov stochastic Navier--Stokes equation;
weak averaging principle; Wasserstein distance; weak pullback attractor.}

\begin{abstract}
We establish three weak averaging principles for distribution-dependent stochastic Navier--Stokes equations with rapidly oscillating coefficients on the torus $\T^2$. 
First, solution laws are relatively compact on finite time intervals, and every limit point as $\var\to0$ is the path law of a variational martingale solution of the averaged equation.
Under a dissipative condition, we can choose bounded complete
variational solution laws of the original and averaged equations so that
every sequence with $\var\to0$ has a subsequence converging weakly to a
bounded complete variational solution law of the averaged equation.
At the level of probability laws, the original nonautonomous equation admits a family of weak pullback attractors, while the averaged equation has a weak global attractor.
The weak pullback attractors converge upper semicontinuously to the weak global attractor of the averaged equation, uniformly with respect to the coefficient hull.
\end{abstract}

\maketitle

\section{Introduction}

In this paper, we study weak averaging principles and asymptotic dynamics for a
class of distribution-dependent stochastic two-dimensional Navier--Stokes
equations with rapidly oscillating coefficients on the torus \(\T^2\).
For \(0<\var\ll1\), the equation under consideration is
\begin{equation}\label{eq:SPDEone}
\begin{aligned}
du_\var(t)
+\left[
\nu Au_\var(t)+B(u_\var(t),u_\var(t))
\right]dt
=
f\left(\frac{t}{\var},u_\var(t),\Law{u_\var(t)}\right)dt
+
g\left(\frac{t}{\var},u_\var(t),\Law{u_\var(t)}\right)dW_t,
\end{aligned}
\end{equation}
where \(\Law{u_\var(t)}\) denotes the law of \(u_\var(t)\),
\(W_t\) is a cylindrical Wiener process on a separable Hilbert space \(U\),
$f:\R\times H\times\mathcal P_2(H)\to H$,
$g:\R\times H\times\mathcal P_2(H)\to L_2(U,H)$,
and \(L_2(U,H)\) is the space of Hilbert--Schmidt operators from \(U\) to
\(H\). Here \(H\) is the zero-mean divergence-free subspace of
\(L^2(\T^2;\R^2)\) defined in \eqref{99:1}, and \(\mathcal P_2(H)\) is the
space of Borel probability measures on \(H\) with finite second moments.
Moreover, \(A\) is the Stokes operator, \(B\) is the Navier--Stokes
bilinear operator, and \(\nu>0\) is the kinematic viscosity coefficient.
The dependence of \(f\) and \(g\) on \(\Law{u_\var(t)}\) represents the
mean-field interaction of the system.

Averaging principles are fundamental tools for the analysis of dynamical
systems with separated time scales. They show that rapidly oscillating
systems can be approximated by effective systems in which the fast
oscillations are averaged out. The classical theory goes back to Krylov,
Bogolyubov and Mitropolsky
\cite{bogolyubov1961asymptotic,krylov1943introduction}, and was later
extended to stochastic differential equations by Khasminskii
\cite{has1966stochastic,khasminskij1968principle,khasminskii2004averaging}.
Since then, stochastic averaging has been extensively developed for fully
coupled systems, time-dependent coefficients, fractional noise, normal
deviations and large deviations; see, for example,
\cite{bakhtin2004diffusion,freidlinWentzell1998,kifer2004some,
veretennikov1991averaging,veretennikov1999large,liu2020averaging,
hairer2020averaging,pei2021averaging,cheng2023averaging2}.
Long-time and uniform-in-time averaging, including periodic and almost-periodic settings, has been studied in
\cite{freidlin2006long,kamenskii2015weak,cheban2021averaging,
liuwang2021averaging,uda2021averaging,crisan2026uniform,xie2026uniform}.

The averaging principle has also been extensively developed for stochastic
partial differential equations under various structural assumptions and
noise regimes; see, for example,
\cite{cerrai2009khasminskii,cerrai2009averaging,cerrai2011averaging,
brehier2012strong,wang2012average,duan2014effective,fu2015strong,
bao2017twotime,pei2017stochastic,cerrai2017averaging,
dong2018burgers,gao2019bogoliubov}
and the more recent works
\cite{sun2021averaging,hong2022mckean,cheng2023second,
cheng2023averaging,defeo2023order,MR2023Asymptotic,
cheng2026nonautonomous}, together with the references therein.
In particular, averaging principles for stochastic two-dimensional
Navier--Stokes equations have been studied in
\cite{gao2021averaging,gao2022averaging,gao2023averaging}.

Stochastic Navier--Stokes equations constitute a fundamental class of
stochastic fluid models, and their well-posedness, stationary and ergodic
behavior, and long-time dynamics have been extensively studied; see, for
example,
\cite{bensoussan1995stochastic,flandoli1995martingale,
flandoli1995ergodicity,kuksin2012math,brzezniak2013random,
brzezniak2018random}.
On the other hand, McKean--Vlasov equations, originating from the work of
McKean \cite{mckean1966class}, describe mean-field interactions through the
dependence of the coefficients on the law of the solution and are closely
related to propagation of chaos and mean-field models; see
\cite{sznitman1991topics,buckdahn2017meanfield,lasry2007mean,
carmona2018probabilistic}. Their stochastic infinite-dimensional
counterparts have also attracted increasing attention
\cite{hong2024mckean}. 
Averaging principles for McKean--Vlasov equations
have subsequently been established in both finite- and infinite-dimensional
settings
\cite{rockner2021mckean,xu2021twotime,hong2022mckean,
shen2022averaging,shen2025twotime,cheng2024ddsde,shi2025averaging,
yin2026stability}, while distribution-dependent stochastic fluid models
have recently been considered in \cite{chen2025homogenization}.
Related long-time dynamics have been studied for distribution-dependent
stochastic Navier--Stokes equations in \cite{zhang2026uniform}, and
averaging together with attractor convergence for distribution-dependent
stochastic reaction--diffusion equations was investigated in
\cite{chen2026averaging}. These developments motivate us to study the
averaging behavior and asymptotic dynamics of \eqref{eq:SPDEone}
simultaneously. Our main results are described below.

First, we establish a finite-time weak Bogolyubov averaging principle for the
distribution-dependent stochastic two-dimensional Navier--Stokes equation
\eqref{eq:SPDEone}. More precisely, if \(u_\var(t)\), \(0<\var\le1\), are
variational martingale solutions of \eqref{eq:SPDEone} with
\(\Law{u_\var(0)}=\Law{u_0}\) for some
\(u_0\in L^4(\Omega,\mathcal F_0,\mathbb P;V)\), then, for every \(T>0\),
\[
\{\Law{u_\var|_{[0,T]}}:0<\var\le1\}
\text{ is relatively compact in }\mathcal P(C([0,T];H)),
\]
and every limit point as \(\var\to0\) is the path law of a variational
martingale solution of the averaged equation
\begin{equation}
\label{eq:SPDEtwo}
\begin{aligned}
d\bar u(t)
+\left[
\nu A\bar u(t)+B(\bar u(t),\bar u(t))
\right]dt
=
\bar f\left(\bar u(t),\Law{\bar u(t)}\right)dt
+
\bar g\left(\bar u(t),\Law{\bar u(t)}\right)dW_t,
\end{aligned}
\end{equation}
with \(\Law{\bar u(0)}=\Law{u_0}\).
Here the averaged coefficients \(\bar f\) and \(\bar g\) are specified in
\ref{item:H-a}. Related finite-time averaging results for stochastic
two-dimensional Navier--Stokes equations can be found in
\cite{gao2021averaging,gao2022averaging,gao2023averaging}, while homogenization
of distribution-dependent stochastic fluid models was studied in
\cite{chen2025homogenization}. Averaging in the sense of convergence in
distribution for stochastic evolution equations was established in
\cite{kamenskii2015weak}. The result above establishes the corresponding
weak averaging principle for \eqref{eq:SPDEone}, where both the drift and
diffusion depend on the law of the solution, without assuming uniqueness
in law.

Second, under an additional dissipativity condition, we establish a weak
second Bogolyubov theorem for \eqref{eq:SPDEone}.
Here ``weak'' refers to subsequential weak convergence of complete solution
laws, without requiring uniqueness.
For every \(0<\var\le1\), the original equation admits a bounded complete
variational solution law, and the averaged equation \eqref{eq:SPDEtwo}
also admits such a solution law. These solution laws can be chosen with
uniform second-moment, \(V\)-moment, and higher-moment bounds.
Moreover, for every sequence \(\var_n\to0\),
\[
\{\Law{u_{\var}(\cdot)}\}_{n\ge1}
\text{ is relatively compact in }\mathcal P(C_{\mathrm{loc}}(\R;H)),
\]
and every subsequential limit
is a bounded complete variational solution law of the averaged equation
\eqref{eq:SPDEtwo}. Thus, the second averaging result extends compactness
and limit identification from finite time intervals to complete
variational solution laws on the whole real line.

Second Bogolyubov theorems and whole-line averaging for
distribution-independent stochastic equations have been studied in
\cite{cheban2021averaging,cheng2023averaging}. Under conditions ensuring
uniqueness of whole-line \(L^2\)-bounded solutions,
\cite{cheng2023second} establishes weak convergence of variational
solution laws on continuous path space, while
\cite{liuwang2021averaging} obtains convergence of time-marginal
distributions uniformly on the whole real line.
For distribution-dependent stochastic
reaction--diffusion equations, a stronger whole-line mean-square
convergence can be obtained under an appropriate contraction structure
\cite{chen2026averaging}. In the present Navier--Stokes setting, the
dissipativity condition does not ensure uniqueness of bounded complete
variational solution laws; see Example \ref{no-uniqueness}.
Such a whole-line mean-square conclusion is not asserted here:
instead, we identify all possible limit points of the selected bounded
complete variational solution laws in
\(\mathcal P(C_{\mathrm{loc}}(\R;H))\).

Third, we study the long-time behavior of the probability laws generated by
\eqref{eq:SPDEone} and \eqref{eq:SPDEtwo}. Since the coefficients of
\eqref{eq:SPDEone} depend on time, its dynamics is nonautonomous and is
considered over the hull \(\mathcal H(F_0)\), namely the closure of the time
translates of \(F_0=(f,g)\).
We prove that the original equation admits a weak pullback attractor
\(\{\mathcal A^\var(F)\}_{F\in\mathcal H(F_0)}\), whereas the autonomous
averaged equation admits a weak global attractor
\(\bar{\mathcal A}\subset\mathcal P_2(H)\).
Here attraction is taken over all variational solution laws with initial
laws in the sets \(K\in\cal D_p\), as specified in Definition
\ref{def:weak-pullback-attractor-NS}. Moreover,
\[
\lim_{\var\to0}
\sup_{F\in\mathcal H(F_0)}
\sup_{\mu\in\mathcal A^\var(F)}
\inf_{\nu\in\bar{\mathcal A}}
d_{BL}(\mu,\nu)
=0.
\]
Hence, the weak pullback attractors converge upperm semicontinuously to the
weak global attractor of the averaged equation, uniformly with respect to
\(F\in\mathcal H(F_0)\).

The main difficulties arise from the Navier--Stokes nonlinearity, the
possible nonuniqueness of bounded complete variational solution laws, and
the uniform estimates needed for attractor convergence. The first difficulty
is that the local comparison estimate for \(B(u,u)\) depends on the
\(V\)-norms of the solutions, while stopping the trajectories does not replace
the marginal laws in the coefficients by those of the stopped processes.
Consequently, the usual localization argument does not directly provide a
closed mean-square stability estimate. 
We overcome this difficulty by combining uniform a priori estimates,
path tightness, and a martingale characterization of the limit,
without relying on uniqueness in law. 
A further difficulty
comes from the construction of bounded complete variational solution laws.
Since these laws are obtained through pullback limits and need not be unique,
we establish their averaging behavior through \(W_2\)-compactness of the time
marginals, finite-window averaging, and consistency of the limiting path laws.
Finally, to prove the upper semicontinuity of the weak attractors, we use
pullback absorption together with uniform \(V\)- and \(2p\)-moment estimates
to obtain \(W_2\)-compactness of the initial laws of finite solution segments
whose terminal laws approximate attractor elements. Applying the averaging
principle on these segments and then the attraction property of the averaged
equation yields the desired upper semicontinuity in the \(d_{BL}\)-topology,
uniformly with respect to the coefficient hull.

Our results are closely related to recent work on averaging and long-time
dynamics for stochastic Navier--Stokes and distribution-dependent equations
\cite{gao2021averaging,gao2022averaging,gao2023averaging,
chen2025homogenization,zhang2026uniform,chen2026averaging}. In particular,
\cite{zhang2026uniform} studies uniform measure attractors for
distribution-dependent stochastic two-dimensional Navier--Stokes equations,
whereas we consider the convergence of weak pullback attractors under
averaging. The work \cite{chen2026averaging} is closest to ours in its
overall objective, but concerns stochastic reaction--diffusion equations
and obtains whole-line mean-square averaging under a contraction condition.
In the present Navier--Stokes setting, neither uniqueness in law for
variational martingale solutions nor uniqueness of bounded complete
variational solution laws is assumed, and both the finite-time and
whole-line results are formulated through compactness and identification
of subsequential limits.

The paper is organized as follows. In Section 2, we introduce the
assumptions and main results. In Section 3 and Section 4, we prove the
weak first Bogolyubov theorem on finite time intervals and establish the
weak second Bogolyubov theorem for bounded complete variational solution
laws of \eqref{eq:SPDEone}, respectively. In Section 5, we study the
long-time behavior of all variational solution laws, prove the existence
of the weak pullback attractors and the weak global attractor of the
averaged equation, and establish the upper semicontinuous convergence
of the former to the latter.

\subsection*{Notation}

Throughout this paper, we work on the two-dimensional torus $\T^2$.
Let $\Pi$ be the Leray projection from $L^2(\T^2;\R^2)$ onto the space of divergence-free vector fields. We define
\begin{align}\label{99:1}
H&:=\left\{u\in L^2(\T^2;\R^2):\operatorname{div}u=0,\ 
\int_{\T^2}u(x)\,dx=0\right\},
&V&:=H^1(\T^2;\R^2)\cap H.
\end{align}
Throughout the paper, all drift and diffusion terms are understood as $H$-valued terms. If the original coefficients are not divergence-free, they are replaced by their Leray projections. Namely, we write $f$ and $g$ for $\Pi f$ and $\Pi g$ whenever necessary.
We define
\[
\begin{aligned}
\norm{u}^2&:=\int_{\T^2}|u(x)|^2\,dx,
&\inpro{u,v}&:=\int_{\T^2}u(x)\cdot v(x)\,dx,\\
\norm{u}_V^2&:=\int_{\T^2}|\nabla u(x)|^2\,dx,
&\inpro{u,v}_V&:=\int_{\T^2}\nabla u(x):\nabla v(x)\,dx.
\end{aligned}
\]
Whenever \(\norm{\cdot}_V\) is considered on \(H\), we extend it by
\[
\norm{u}_V:=+\infty,\qquad u\in H\setminus V.
\]
Let $V^*$ be the dual of $V$ with pivot space $H$; the brackets $\inpro{\cdot,\cdot}$ also denote the duality pairing between $V^*$ and $V$. Let
\[
A:=-\Pi\Delta,\qquad D(A):=H^2(\T^2;\R^2)\cap V
\]
be the Stokes operator.
For $u,v\in V$, define
\[
B(u,v):=\Pi((u\cdot\nabla)v).
\]
We also write
\[
b(u,v,w):=\int_{\T^2}(u\cdot\nabla)v\cdot w\,dx,
\qquad u,v,w\in V.
\]
Then $\inpro{B(u,v),w}=b(u,v,w)$.
On the zero-mean subspace, the Poincar\'e inequality holds with a constant $\lambda_1>0$:
\[
\norm{u}^2\le \frac{1}{\lambda_1}\norm{u}_V^2,\qquad u\in H.
\]
Let $\cal P(H)$ be the space of Borel probability measures on $H$, and let $\cal P_2(H)$ consist of those with finite second moments. 
We equip $\cal P_2(H)$ with the Wasserstein distance $W_2$, defined by
\[W_2^2(\mu,\nu):=\inf_{\pi\in\Pi(\mu,\nu)}\int_{H\times H}\|x-y\|^2\,\pi(dx,dy),\]
where $\Pi(\mu,\nu)$ is the set of probability measures on $H\times H$ with marginals $\mu$ and $\nu$.
Equivalently, for any random variables $X,Y$ defined on a common probability space with
$\Law{X}=\mu$ and $\Law{Y}=\nu$,
\[
W_2^2(\mu,\nu)=\inf \E\norm{X-Y}^2,
\]
where the infimum is taken over all such couplings.
Similarly, $W_{2,V}$ denotes the Wasserstein distance on $\cal P_2(V)$ induced by the norm $\norm{\cdot}_V$.
Let $\delta_0$ be the Dirac measure at $0\in H$. Let \((E,d_E)\) be a metric space. For a bounded Lipschitz function
\(\phi:E\to\R\), define
\[
\norm{\phi}_{BL}:=\norm{\phi}_{\infty}+\mathrm{Lip}(\phi),
\]
where
\[
\norm{\phi}_{\infty}:=\sup_{x\in E}|\phi(x)|,
\qquad
\mathrm{Lip}(\phi):=
\sup_{x\neq y}\frac{|\phi(x)-\phi(y)|}{d_E(x,y)}.
\]
For \(\mu,\nu\in\mathcal P(E)\), the bounded-Lipschitz distance is defined by
\[
d_{BL}(\mu,\nu)
:=
\sup_{\norm{\phi}_{BL}\le1}
\left|
\int_E\phi(x)\,\mu(dx)
-
\int_E\phi(x)\,\nu(dx)
\right|.
\]
The notation $\Rightarrow$ denotes weak convergence of probability measures. We equip $C([s,t];H)$ with the supremum norm.
We denote by \(C_{\mathrm{loc}}(\R;H)\) the space \(C(\R;H)\) endowed with the
compact-open topology, namely,
\[
x_n\to x \quad\text{in } C_{\mathrm{loc}}(\R;H)
\]
if and only if, for every \(R\in\mathbb N\),
\[
\sup_{t\in[-R,R]}\norm{x_n(t)-x(t)}\to0 .
\]
Let $\lfloor C\rfloor$ denote the integer part of $C$ for any $C\geq0$.
We use $C$ with or without subscripts to denote some constant, 
which may change from line to line.

\section{Statement of Main Results}
Let $(\Omega,\cal F,\{\cal F_t\}_{t\in\R},\P)$ be a filtered
probability space satisfying the usual conditions, and let
$\{W_t\}_{t\in\R}$ be a two-sided cylindrical Wiener process on
a separable Hilbert space $U$ such that, for every $s\in\R$,
$\{W_{s+t}-W_s\}_{t\ge0}$ is a cylindrical Wiener process with
respect to $\{\cal F_{s+t}\}_{t\ge0}$.
We consider the following distribution-dependent stochastic Navier--Stokes equation on $\T^2$:
\[
\begin{aligned}
du_\var(t)
+\left[
\nu A u_\var(t)+B(u_\var(t),u_\var(t))
\right]dt
=
f\left(\frac{t}{\var},u_\var(t),\Law{u_\var(t)}\right)dt
+
g\left(\frac{t}{\var},u_\var(t),\Law{u_\var(t)}\right)dW_t,
\end{aligned}
\]
where $\nu>0$ is the kinematic viscosity coefficient, $f:\R\times H\times\cal P_2(H)\to H$ and $g:\R\times H\times\cal P_2(H)\to L_2(U,H)$ are measurable mappings.

\subsection{Conditions and Assumptions}\label{sec:assump}
We have the following standard identities:
\begin{align}
\label{bskew}
b(u,v,w)&=-b(u,w,v),\\
\label{bzero}
b(u,v,v)&=0,\\
\label{BzeroH}
\inpro{B(u,u),u}&=0,\\
\label{BzeroV}
\inpro{B(u,u),Au}&=0,\qquad u\in D(A),
\end{align}
where \eqref{BzeroV} is specific to the two-dimensional incompressible periodic case.
Moreover, for $u,v,w\in V$, there exists a constant $C$ such that
\begin{align}
\label{bloc}
|b(w,v,w)|
\le C\norm{v}_V\norm{w}\norm{w}_V.
\end{align}
Assume the following conditions hold.

\begin{enumerate}[label=\textup{(H\arabic*)},leftmargin=2.7em]

\item\label{item:H-f}
There exist constants $K_f,L_f>0$ such that for all $t\in\R$, $x,y\in H$, and $\mu_1,\mu_2\in\cal P_2(H)$,
\[
\norm{f(t,0,\delta_0)}\le K_f,\quad
\norm{f(t,x,\mu_1)-f(t,y,\mu_2)}
\le
L_f\left(\norm{x-y}+W_2(\mu_1,\mu_2)\right).
\]

\item\label{item:H-g}
(i) There exist constants $K_g>0$ and $L_g\ge0$ such that for all $t\in\R$, $x,y\in H$, and $\mu_1,\mu_2\in\cal P_2(H)$,
\[
\norm{g(t,0,\delta_0)}_{L_2(U,H)}\le K_g,
\]
\[
\norm{g(t,x,\mu_1)-g(t,y,\mu_2)}_{L_2(U,H)}
\le
L_g\left(\norm{x-y}+W_2(\mu_1,\mu_2)\right);
\]
(ii) There exist constants $K_g'>0$ and $L_g'\ge0$ such that for all $t\in\R$, $x,y\in V$, and $\mu,\mu_1,\mu_2\in\cal P_2(V)$,
\[
g(t,x,\mu)\in L_2(U,V),
\quad
\norm{g(t,0,\delta_0)}_{L_2(U,V)}\le K_g',
\]
and
\[
\norm{g(t,x,\mu_1)-g(t,y,\mu_2)}_{L_2(U,V)}
\le
L_g'\left(\norm{x-y}_V+W_{2,V}(\mu_1,\mu_2)\right).
\]

\item\label{item:H-a}
There exists a function $\omega^f:\R_+\to\R_+$ satisfying $\omega^f(T)\to0$ as $T\to\infty$, 
such that for all $x\in H$ and $\mu\in\cal P_2(H)$,
\begin{align}
\label{wf}
\sup_{a\in\R}
\norm{
\frac1T\int_a^{a+T}
\left[
f(s,x,\mu)-\bar f(x,\mu)
\right]\,ds
}
\le
\omega^f(T)
\left(
1+\norm{x}+\left(\mu(\norm{\cdot}^2)\right)^{1/2}
\right),
\end{align}
where $\mu(\norm{\cdot}^2):=\int_H\norm{x}^2\,\mu(dx)$.

There exists a function $\omega^g:\R_+\to\R_+$ satisfying
$\omega^g(T)\to0$ as $T\to\infty$,
such that for all $x\in H$ and $\mu\in\cal P_2(H)$,
\begin{align}
\label{wg}
\sup_{a\in\R}
\frac1T\int_a^{a+T}
\norm{
g(s,x,\mu)-\bar g(x,\mu)
}_{L_2(U,H)}^2\,ds
\le
\omega^g(T)
\left(
1+\norm{x}^2+\mu(\norm{\cdot}^2)
\right).
\end{align}

\item\label{item:H-d}
There exist constants \(c_0\ge0\), \(c_1\in\R\), and \(c_2\ge0\) such that for all
\(t\in\R\), \(u\in H\), and \(\mu\in\cal P_2(H)\),
\begin{align}
\label{Hfour}
2\inpro{f(t,u,\mu),u}
+
\norm{g(t,u,\mu)}_{L_2(U,H)}^2
\le
c_0+c_1\norm{u}^2+c_2\mu(\norm{\cdot}^2).
\end{align}
\end{enumerate}

\begin{rem}
\label{rem:averaged-coefficients-NS}
(i) 
By \ref{item:H-f}, \ref{item:H-g} and Young's inequality,
we have for any $\iota\in(0,1)$, all $t \in \R$, $x \in H$ and $\mu \in \cal{P}_2(H)$,
\begin{align}\label{fcon}
\norm{f\left(t,x,\mu\right)}
\leq L_f\|x\|+L_f\mu(\|\cdot\|^2)^{\frac12}+K_f,
\end{align}
\begin{align}\label{gcon}
\norm{g\left(t,x,\mu\right)}_{L_2(U,H)}^2
\leq (L_g^2+\iota)\|x\|^2+C_\iota\left(\mu(\|\cdot\|^2)+1\right).
\end{align}

(ii) Under conditions \ref{item:H-f}, \ref{item:H-g} and \ref{item:H-a},
the averaged coefficients $\bar f$ and $\bar g$ satisfy \ref{item:H-f},
\ref{item:H-g}, \eqref{fcon} and \eqref{gcon}. For $\bar g$, the $V$-valued part of \ref{item:H-g} follows from
\ref{item:H-g}(ii) and \eqref{wg} by weak compactness in $L_2(U,V)$
and weak lower semicontinuity.

(iii) By Lemma \ref{lem:mild-solution}, under
\ref{item:H-f}, \ref{item:H-g} and \ref{item:H-a}, equations \eqref{eq:SPDEone}
and \eqref{eq:SPDEtwo} admit variational martingale solutions
in the sense of Definition \ref{def:variational-martingale-solution-NS}
on $[s,\infty)$ for every \(s\in\R\) and \(0<\var\le1\).
The initial law may be any \(\mu_0\in\cal P_2(H)\) satisfying
\[\int_H\norm{x}^{2p}\,\mu_0(dx)<\infty \text{ for some } p>1.\]
\end{rem}

\begin{rem}\label{rem:bar}
If \ref{item:H-d} holds, then $\bar f$ and $\bar g$
also satisfy the same dissipativity condition, namely,
\[
2\inpro{\bar f(u,\mu),u}
+
\norm{\bar g(u,\mu)}_{L_2(U,H)}^2
\le
c_0+c_1\norm{u}^2+c_2\mu(\norm{\cdot}^2),
\qquad u\in H,\ \mu\in\cal P_2(H).
\]
Indeed, this follows by averaging the inequality in \ref{item:H-d} and
using Jensen's inequality for the term $\|\bar g(u,\mu)\|_{L_2(U,H)}^2$.
\end{rem}

\subsection{Main results}
Our first result is the finite-time weak Bogolyubov theorem.
\begin{thm}
\label{thmT}
Assume that \ref{item:H-f}, \ref{item:H-g} and \ref{item:H-a} hold.
For $0<\var\le1$, let $u_\var(t)$ be any variational martingale solution
of \eqref{eq:SPDEone} in the sense of Definition
\ref{def:variational-martingale-solution-NS}.
Assume that $\Law{u_\var(0)}=\Law{u_0}$ for some
$u_0\in L^4(\Omega,\cal F_0,\P;V)$.
Then, for every $T>0$, the family
\[\{\Law{u_\var|_{[0,T]}}:0<\var\le1\}\text{ is relatively compact in }
\cal P(C([0,T];H)).\]
Every limit point as $\var\to0$ is the path law of a variational
martingale solution $\bar u$ of \eqref{eq:SPDEtwo} on $[0,T]$
with $\Law{\bar u(0)}=\Law{u_0}$.
\end{thm}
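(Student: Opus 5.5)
We argue by uniform a priori estimates, tightness, and identification of the limit through a martingale problem; uniqueness in law is never used.

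\textbf{Step 1: uniform moment bounds.} Apply It\^o's formula to $\norm{u_\var}^2$ and $\norm{u_\var}^4$, using \eqref{BzeroH}, \eqref{fcon}, \eqref{gcon}, the bound $\mu(\norm{\cdot}^2)=\E\norm{u_\var(t)}^2$, and the Burkholder--Davis--Gundy inequality. With Gronwall this gives
\[
\sup_{0<\var\le1}\E\Big[\sup_{t\le T}\norm{u_\var(t)}^4+\Big(\int_0^T\norm{u_\var}_V^2\,dt\Big)^2\Big]\le C_T .
\]
Next apply It\^o's formula to $\norm{u_\var}_V^2$, formally through a Galerkin approximation or the variational formulation. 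By \eqref{BzeroV} the nonlinearity drops out. The noise term is controlled by \ref{item:H-g}(ii), and the drift term by $\langle f,Au\rangle\le \frac{\nu}{2}\norm{Au}^2+C\norm{f}^2$. Using $u_0\in L^4(\Omega;V)$, this yields
\[
\sup_\var\E\Big[\sup_{t\le T}\norm{u_\var(t)}_V^2+\int_0^T\norm{Au_\var}^2\,dt\Big]\le C_T .
\]
Every constant depends on $\var$ only through the uniform bounds in \ref{item:H-f}--\ref{item:H-g}.

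\textbf{Step 2: tightness in $C([0,T];H)$.} Write $u_\var(t)=u_\var(0)-\int_0^t(\nu Au_\var+B(u_\var,u_\var)-f)\,ds+M_\var(t)$. The Step 1 bounds, together with $\norm{B(u,u)}_{V^*}\le C\norm{u}\norm{u}_V$, give a uniform H\"older-type modulus in $V^*$ for the drift part. The BDG inequality with the fourth moments gives Kolmogorov continuity of $M_\var$ in $H$. Combined with the uniform bound on $\sup_t\norm{u_\var}_V$ and the compact embedding $V\hookrightarrow H$, an Aldous/Arzel\`a--Ascoli type criterion gives tightness of $\Law{u_\var|_{[0,T]}}$ in $C([0,T];H)$. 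Prokhorov's theorem then yields relative compactness.

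\textbf{Step 3: identification of the limit.} Take $\var_n\to0$ with $\Law{u_{\var_n}}\Rightarrow P$, and pass to a Skorokhod representation $\tilde u_n\to\tilde u$ a.s. in $C([0,T];H)$. By Fatou's lemma the Step 1 bounds pass to $\tilde u$. The uniform fourth moments give uniform integrability, hence $\Law{\tilde u_n(t)}\to\Law{\tilde u(t)}$ in $W_2$ uniformly in $t\in[0,T]$. For test vectors $e_k\in D(A)$ from the Stokes basis and bounded continuous functionals $\Phi$ of the path up to time $s$, we show that
\[
M^{\phi}(t)=\phi(\langle u(t),e_k\rangle)-\int_0^t\big[\phi'(\cdot)\langle -\nu Au-B(u,u)+\bar f(u,\Law{u}),e_k\rangle+\tfrac12\phi''(\cdot)\norm{\bar g^*(u,\Law{u})e_k}_U^2\big]dr
\]
is a martingale under $P$, and similarly for products over pairs $e_k,e_l$ to recover the full quadratic variation. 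One then reconstructs a Wiener process by the standard martingale representation theorem. Among the terms: the nonlinear term converges because $\langle B(u,u),e_k\rangle=-b(u,e_k,u)$ is continuous on $H$ and has the moment bounds. In the prelimit, $f(r/\var,\cdot)$ appears instead of $\bar f$.

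\textbf{Main obstacle: replacing the oscillating coefficients.} We must show that
\[
\E\Big[\Phi\int_s^t\phi'(\langle \tilde u_n(r),e_k\rangle)\,\big\langle f(r/\var_n,\tilde u_n(r),\mu^n_r)-\bar f(\tilde u_n(r),\mu^n_r),e_k\big\rangle\,dr\Big]\to0,
\]
together with the analogous statement for $\norm{g^*e_k}^2-\norm{\bar g^*e_k}^2$. The plan is a Khasminskii discretization. Partition $[s,t]$ into blocks of length $\delta$ and, on each block, freeze $\tilde u_n(r)$, $\mu^n_r$ and $\phi'$ at the left endpoint. The freezing error is bounded, using \ref{item:H-f} and the Lipschitz dependence, by the $H$-modulus of continuity of the paths and of $r\mapsto\mu^n_r$ in $W_2$. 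By the tightness of Step 2 this error tends to $0$ as $\delta\to0$, uniformly in $n$. On each frozen block the substitution $r=\var_n\sigma$ gives a time average over a window of length $\delta/\var_n\to\infty$. By \eqref{wf}, and by \eqref{wg} combined with Cauchy--Schwarz for the difference of squares, each block contributes at most $\delta\,\omega(\delta/\var_n)(1+\text{moments})$. Summing over the blocks and letting $n\to\infty$ and then $\delta\to0$ completes the argument. The resulting weak solution has $\Law{\bar u(0)}=\Law{u_0}$ and the regularity $L^2(0,T;V)$, so it is a variational martingale solution of \eqref{eq:SPDEtwo}.
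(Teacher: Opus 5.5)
Your proposal is correct. Its skeleton matches the paper's: uniform moment bounds, tightness in $C([0,T];H)$, a Khasminskii partition using \eqref{wf}--\eqref{wg}, and martingale identification of the limit. The two technical cores, however, are handled differently.

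The paper first proves a \emph{quantitative} residual bound on the original probability space. Using the mean-square estimate $\E\norm{u_\var(t_2)-u_\var(t_1)}^2\le C_T(t_2-t_1)$ of Lemma~\ref{Law-con}, it shows that $\E\sup_{t\le T}\norm{\int_0^t(f-\bar f)\,ds}^2+\E\sup_{t\le T}\norm{\int_0^t(g-\bar g)\,dW_s}^2\le C_T[\delta+\omega^f(\delta/\var)^2+\omega^g(\delta/\var)]$ uniformly over all solutions, and then takes $\delta=\sqrt{\var}$. It also carries the driving noises along: joint tightness of $(u_{\var_n},W)$, cross-variations with $\inpro{W,h}_U$, and L\'evy's characterization. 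So the limiting stochastic integral is identified against the limit $W^*$ of the given noises, with no representation theorem and no enlargement of the basis. The price is integrability. Lemma~\ref{Law-con} needs $\E\sup_t\norm{u_\var(t)}_V^4$ (Lemma~\ref{lemtwo} with $p=2$), and tightness comes from the stochastic integral in $C^\gamma([0,T];V)$ via \ref{item:H-g}(ii).

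You differ at each of these points:
\begin{itemize}
\item You control the freezing errors by the modulus of continuity supplied by tightness plus uniform integrability, not by Lemma~\ref{Law-con}.
\item You run the block estimate only inside the martingale problem, letting $n\to\infty$ before $\delta\to0$.
\item You estimate the noise only in $H$ and interpolate between $L^\infty(0,T;V)$ and H\"older continuity in $V^*$.
\item You recover $W$ by martingale representation.
\end{itemize}
This gives no rate and an abstract limiting Wiener process. In exchange it needs only $\E\sup_t\norm{u_\var}_V^2$ and $2p$-moments in $H$ with $p>1$. It is essentially the mechanism the paper itself uses later in Lemma~\ref{lem:finite-window-law-stability-NS} and Corollary~\ref{cor:finite-time-compact-law-averaging-NS}.

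Two smaller remarks. First, for the $g$-terms Cauchy--Schwarz gives a block contribution $\delta\sqrt{\omega^g(\delta/\var_n)}$ rather than $\delta\,\omega^g(\delta/\var_n)$; this is harmless. Second, your ``formal'' $V$-estimate must hold for \emph{every} solution, since uniqueness in law is not available. A Galerkin approximation of the full McKean--Vlasov equation only bounds \emph{some} solution, so it is not enough. The paper's appendix (proof of Lemma~\ref{lemtwo}) instead runs a Galerkin scheme whose forcing $P_nf(t/\var,u_\var,\Law{u_\var})$ and noise $P_ng(t/\var,u_\var,\Law{u_\var})\,dW$ are frozen along the given solution. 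Then $u_n-P_nu_\var$ solves a noise-free equation, and it tends to $0$ pathwise by the two-dimensional comparison estimate.
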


The following result establishes the existence of bounded complete variational solution laws in the sense of Definition \ref{def:complete-variational-solution-law-NS} below and identifies their limit points under the averaging limit.
\begin{thm}
\label{thmR}
Assume that \ref{item:H-f}, \ref{item:H-g}, \ref{item:H-a} and
\ref{item:H-d} hold. Suppose further that
\(2\nu\lambda_1>c_1+c_2\). Then the following conclusions hold.
\begin{enumerate}
\item For every \(0<\var\le1\), equation \eqref{eq:SPDEone} admits a
bounded complete variational solution law in the sense of Definition
\ref{def:complete-variational-solution-law-NS}. For some \(p>1\),
these solution laws can be chosen so that
\begin{equation}
\label{eq:thmR-original-bound-NS}
\sup_{0<\var\le1}\sup_{t\in\R}
\E\norm{u_\var(t)}^2
+
\sup_{0<\var\le1}\sup_{t\in\R}
\E\norm{u_\var(t)}_V^2
+
\sup_{0<\var\le1}\sup_{t\in\R}
\E\norm{u_\var(t)}^{2p}
<\infty.
\end{equation}

\item Equation \eqref{eq:SPDEtwo} admits a bounded complete variational
solution law in the sense of Definition
\ref{def:complete-variational-solution-law-NS}. For the same \(p>1\),
this solution law can be chosen so that
\begin{equation}
\label{eq:thmR-averaged-bound-NS}
\sup_{t\in\R}
\E\norm{\bar u(t)}^2
+
\sup_{t\in\R}
\E\norm{\bar u(t)}_V^2
+
\sup_{t\in\R}
\E\norm{\bar u(t)}^{2p}
<\infty.
\end{equation}

\item 
For every sequence \(\var_n\to0\), the path laws
\(\Law{u_{\var_n}(\cdot)}\) form a relatively compact family in
\(\cal P(C_{\mathrm{loc}}(\R;H))\).
Every subsequential limit is a bounded complete variational solution
law of the averaged equation \eqref{eq:SPDEtwo} in the sense of
Definition \ref{def:complete-variational-solution-law-NS}.
\end{enumerate}
\end{thm}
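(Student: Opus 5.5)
Parts (1) and (2) will come from a pullback construction with uniform moment bounds. Part (3) will come from a diagonal compactness argument, with the limit identified through Theorem \ref{thmT} applied on finite windows.

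\textbf{Step 1: uniform estimates.} Fix $F$ equal to $f(\cdot/\var,\cdot,\cdot)$, $g(\cdot/\var,\cdot,\cdot)$ or the averaged pair. For a variational martingale solution started at time $s$ with law $\delta_0$, apply Itô's formula to $\norm{u}^2$. By \eqref{BzeroH}, \ref{item:H-d} and Poincaré,
\[
\frac{d}{dt}\E\norm{u(t)}^2\le c_0-(2\nu\lambda_1-c_1-c_2)\E\norm{u(t)}^2 .
\]
Gronwall then bounds $\E\norm{u(t)}^2$ by $c_0/(2\nu\lambda_1-c_1-c_2)$, uniformly in $s\le t$ and in $\var$. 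Integrating the dissipation gives $\int_t^{t+1}\E\norm{u}_V^2\,dr\le C$.

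For the $2p$-moment, apply Itô to $\norm{u}^{2p}$ with $p>1$ close to $1$. The extra Itô correction is $p(p-1)\norm{u}^{2p-4}\norm{g^*u}^2\le p(p-1)\norm{u}^{2p-2}\norm{g}^2_{L_2}$. For $p$ near $1$, \eqref{gcon} lets this be absorbed into the strict gap $2\nu\lambda_1-c_1-c_2>0$. Young's inequality handles the $\mu(\norm{\cdot}^2)$ cross terms, using the already-bounded second moment.

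For the $V$-moment, use $\norm{u}_V^2$ and \eqref{BzeroV}, and note $\inpro{f,Au}\le \frac{\nu}{2}\norm{Au}^2+C\norm f^2$. The trace term $\norm{g}^2_{L_2(U,V)}$ is controlled via \ref{item:H-g}(ii) by $C(1+\norm{u}_V^2+\E\norm{u}_V^2)$. This yields a linear differential inequality with possibly positive growth, so I would not close it directly. Instead I would use the uniform time-averaged bound $\int_t^{t+1}\E\norm{u}_V^2\le C$ together with a uniform Gronwall lemma on unit windows. Starting from $\delta_0$ at time $s$ gives $\sup_{t\ge s+1}\E\norm{u(t)}_V^2\le C$, uniformly in $s$ and $\var$.

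\textbf{Step 2: pullback limits, giving (1) and (2).} Take $s_k\to-\infty$ and solutions $u^{k}_\var$ on $[s_k,\infty)$, which exist by Remark \ref{rem:averaged-coefficients-NS}(iii). On each window $[-R,R]$, tightness in $C([-R,R];H)$ follows as in the proof of Theorem \ref{thmT}: uniform $V$-moments give compactness of the marginals in $H$, and Aldous/Kolmogorov-type increments come from the equation and BDG. A diagonal argument then yields a limit law on $C_{\mathrm{loc}}(\R;H)$. Since the $2p$-moments are uniformly bounded, uniform integrability upgrades weak convergence of marginals to $W_2$ convergence. Hence the law-dependent coefficients pass to the limit, and the martingale characterization, exactly as in Theorem \ref{thmT}, shows the limit is a complete variational solution law. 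The bounds \eqref{eq:thmR-original-bound-NS} and \eqref{eq:thmR-averaged-bound-NS} survive by Fatou and lower semicontinuity of $\norm{\cdot}_V$. The same argument applied to $(\bar f,\bar g)$, using Remark \ref{rem:bar}, gives (2).

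\textbf{Step 3: averaging on the whole line, giving (3).} For $\var_n\to0$, the uniform bounds make the laws $\Law{u_{\var_n}}$ tight on each $C([-R,R];H)$, hence relatively compact in $\cal P(C_{\mathrm{loc}}(\R;H))$ by a diagonal argument. To identify a limit $P$, fix $R$ and restart the problem at time $-R$ via time shift. The law of $u_{\var_n}(-R)$ varies with $n$, so Theorem \ref{thmT} does not apply verbatim. I would therefore reprove its identification step with converging initial laws: $\Law{u_{\var_n}(-R)}\to P_{-R}$ in $W_2$ by the $2p$ bound. The time shift $t\mapsto t-R$ alters the fast coefficient only by a translation, and \eqref{wf}, \eqref{wg} are uniform in $a$, so the averaging estimate of Theorem \ref{thmT}'s proof is unaffected. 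This shows that $P$ restricted to $[-R,R]$ is a martingale solution law of \eqref{eq:SPDEtwo}. These restrictions are consistent in $R$, and the moment bounds pass to $P$, so $P$ is a bounded complete solution law.

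The main obstacle is this last identification. Theorem \ref{thmT} assumes a fixed initial law in $L^4(V)$, whereas here only uniform second $V$-moments and $2p$-moments are available at $-R$. The fix I would try first is to show that the proof of Theorem \ref{thmT} only uses uniform $V$-moment bounds along the trajectory plus $W_2$-convergent initial laws. If fourth $V$-moments are truly needed for the tightness or martingale step, I would instead work with time-integrated $V$-norms, which the energy estimate supplies.
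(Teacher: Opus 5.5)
Your overall architecture matches the paper's. You use a pullback from $\delta_0$ and uniform $H$-, $V$- and $2p$-bounds; your uniform Gronwall step for $\E\norm{u}_V^2$ is equivalent to the paper's choice of $s\in[t-1,t]$ with $\E\norm{u(s)}_V^2\le C$ followed by Lemma \ref{lemtwo} on $[s,s+1]$. Then come $W_2$-compactness of the marginals at $-R$, windowwise identification with converging initial laws and shift-uniform averaging, and finally diagonal extraction, consistency and lower semicontinuity.

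The gap is the step you flag yourself, and you locate the difficulty in the wrong place. Fourth $V$-moments are not needed for tightness or for the martingale identification. Tightness on a window follows, as in Lemma \ref{lem:finite-window-law-stability-NS}, from three ingredients:
\begin{itemize}
\item the bound on $\E\sup_t\norm{u(t)}_V^2+\E\int\norm{Au}^2\,dt$ from Lemma \ref{lemtwo} with $p=1$;
\item the estimate $\norm{B(u,u)}\le C\norm{u}_V\norm{Au}$ for the drift increments in $H$;
\item Kolmogorov's criterion for the $H$-valued stochastic integral, using $2p$-moments.
\end{itemize}
The martingale step only needs uniform integrability, which $2p$-moments with $p>1$ provide. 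The same substitution is needed in your Step 2, since the tightness argument of Theorem \ref{thmT} uses $\E\sup_t\norm{u(t)}_V^4$.

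Fourth $V$-moments genuinely enter only in the averaging residual. The bounds \eqref{93:2} on $A_1,A_3$ rest on the mean-square rate \eqref{lemconabt} of Lemma \ref{Law-con}. That proof controls $\E\bigl[\norm{u(t_1)}_V\int_{t_1}^{t_2}\norm{u(r)}_V^2\,dr\bigr]$ via Cauchy--Schwarz and $\E\norm{u}_V^4$. Your fallback to time-integrated $V$-norms does not restore this estimate, because that triple product is not controlled by second $V$-moments and $H$-moments.

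The missing idea is that no rate is needed. The rate in \eqref{93:2} must be replaced by
\[
\lim_{\delta\downarrow0}\sup_{n\ge1}\E\sup_{|t-s|\le\delta}\norm{U_n(t)-U_n(s)}^2=0,
\]
and this follows from what you already have. Tightness in $C([0,T];H)$ makes the moduli of continuity tend to zero in probability uniformly in $n$. The bound on $\E\sup_t\norm{U_n(t)}^{2p}$ with $p>1$ makes their squares uniformly integrable.

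Use this modulus in place of $C_T(s-k\delta)$, including for the measure arguments via $W_2^2(\Law{U_n(s)},\Law{U_n(k\delta)})\le\E\norm{U_n(s)-U_n(k\delta)}^2$. Then take $\delta=\sqrt{\var_n}$ together with the shift-uniform bounds \eqref{wf} and \eqref{wg}, and the drift and diffusion residuals vanish. This is precisely Corollary \ref{cor:finite-time-compact-law-averaging-NS}, which the paper proves first and then applies with $\tau_n=-R$, $T=2R$ to obtain part (3).
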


We next formulate the global averaging principle in the weak sense
for the attractors in Definition \ref{def:weak-pullback-attractor-NS}.
The main result concerns their upper semicontinuity under averaging,
uniformly over the hull.

Write \(F_0=(f,g)\) and let
\[
\sigma_\tau F_0:=(f(\cdot+\tau,\cdot,\cdot),g(\cdot+\tau,\cdot,\cdot)),
\qquad \tau\in\R.
\]
For \(l\in\N\), set
\[
K_l
:=
\left\{
\mu\in\cal P_2(H):
\int_H\norm{z}^2\,\mu(dz)\le l^2
\right\},
\]
and
\begin{align*}
d_l(F_1,F_2)
:=
\sup_{\substack{|s|\le l,\ \norm{x}\le l\\ \mu\in K_l}}
\big(
\norm{f_1(s,x,\mu)-f_2(s,x,\mu)}+
\norm{g_1(s,x,\mu)-g_2(s,x,\mu)}_{L_2(U,H)}
\big).
\end{align*}
We equip the coefficient space with the metric
\[
d(F_1,F_2)
:=
\sum_{l=1}^\infty
2^{-l}
\frac{d_l(F_1,F_2)}{1+d_l(F_1,F_2)}
\]
and define the hull
\[
\cal H(F_0)
:=
\overline{
\{\sigma_\tau F_0:\tau\in\R\}
}^d.
\]
\begin{thm}
\label{thm:global-weak-attractor-averaging-NS}
Assume that \ref{item:H-f}, \ref{item:H-g}, \ref{item:H-a}, and
\ref{item:H-d} hold with $2\nu\lambda_1>c_1+c_2$.
Then there exists $p>1$ such that, with
\[
\cal D_p
:=
\left\{
K\subset\cal P_2(H):
\sup_{\mu\in K}\int_H\norm{x}^{2p}\,\mu(dx)<\infty
\right\},
\]
the following conclusions hold.
\begin{enumerate}
\item For every $0<\var\le1$, equation \eqref{eq:SPDEone}
admits a weak pullback attractor
$\{\cal A^\var(F)\}_{F\in\cal H(F_0)}$
with respect to $\cal D_p$ in the sense of Definition
\ref{def:weak-pullback-attractor-NS}.

\item Equation \eqref{eq:SPDEtwo} admits a weak global attractor
$\bar{\cal A}$ with respect to $\cal D_p$ in the sense of
Definition \ref{def:weak-pullback-attractor-NS}.

\item The attractors converge in the sense that
\[
\lim_{\var\to0}
\sup_{F\in\cal H(F_0)}
\sup_{\mu\in\cal A^\var(F)}
\inf_{\nu\in\bar{\cal A}}
d_{BL}(\mu,\nu)=0.
\]
\end{enumerate}
\end{thm}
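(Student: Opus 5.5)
The proof has three parts: uniform moment estimates, construction of the attractors, and a compactness-plus-averaging argument for upper semicontinuity.

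\textbf{Step 1: uniform dissipative estimates.} Apply Itô's formula to $\norm{u}^2$ and use \eqref{BzeroH}, \ref{item:H-d} and Poincaré. Writing $m(t)=\E\norm{u(t)}^2$ for any variational solution started at time $s$ with coefficients $F\in\cal H(F_0)$ (for any $0<\var\le1$, and also for the averaged coefficients via Remark \ref{rem:bar}), we get
\[
m'(t)\le c_0-(2\nu\lambda_1-c_1-c_2)m(t).
\]
Hence $m(t)\le e^{-\gamma(t-s)}m(s)+c_0/\gamma$ with $\gamma:=2\nu\lambda_1-c_1-c_2>0$. Next choose $p>1$ close to $1$ so that the analogous inequality for $\E\norm{u}^{2p}$ still has a negative rate. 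This uses \eqref{gcon}, with the correction term $2p(p-1)\norm{u}^{2p-4}\norm{g^*u}^2$ absorbed since $p-1$ is small. Finally, Itô for $\norm{u}_V^2$ via \eqref{BzeroV} and \ref{item:H-g}(ii) gives $\int_t^{t+1}\E\norm{u}_V^2$ bounds and, after a short time, pointwise $V$-moment bounds. All these constants are uniform in $F$ and $\var$, since the hypotheses hold with the same constants over the hull; the hull is closed under $d$, and the bounds pass to limits. This gives a uniformly absorbing set $B_0=\{\mu:\mu(\norm{\cdot}^{2p})\le R,\ \mu(\norm{\cdot}_V^2)\le R\}$ for $\cal D_p$, attained after a time $T_K$ depending only on $K\in\cal D_p$. $B_0$ is relatively $W_2$-compact: compact embedding $V\subset H$ gives tightness, and the $2p$-moment gives uniform integrability of $\norm{\cdot}^2$.

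\textbf{Step 2: existence.} For parts (1) and (2), define $\cal A^\var(F)$ as the set of all $\mu$ for which there exist $t_n\to\infty$ and variational solution laws on $[-t_n,0]$ for $\sigma_{\cdot}F$ with initial laws in a fixed $K\in\cal D_p$ whose time-$0$ laws converge to $\mu$ in $d_{BL}$. Equivalently, take the union over $K$ of the $\omega$-limit sets, which are contained in $\overline{B_0}$. Compactness follows from Step 1. Attraction follows by the usual contradiction argument: if some sequence stayed $\delta$-away, a convergent subsequence would produce an element of $\cal A^\var(F)$. Invariance is in the weak (set-valued) sense of Definition \ref{def:weak-pullback-attractor-NS}. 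Its verification uses concatenation of solutions and the closedness of solution laws under weak limits, the latter being the martingale identification of Theorem \ref{thmT} with fixed $\var$. For the averaged autonomous equation the same construction produces $\bar{\cal A}$.

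\textbf{Step 3: upper semicontinuity (main obstacle).} Argue by contradiction. Suppose there are $\delta>0$, $\var_n\to0$, $F_n\in\cal H(F_0)$ and $\mu_n\in\cal A^{\var_n}(F_n)$ with $d_{BL}(\mu_n,\bar{\cal A})\ge\delta$.

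Fix $T$ such that the averaged flow started from $\overline{B_0}$ is $\delta/3$-close to $\bar{\cal A}$ after time $T$; this follows from attraction in (2), since $B_0\in\cal D_p$. By the definition of $\cal A^{\var_n}(F_n)$ and absorption, each $\mu_n$ is $d_{BL}$-approximated within $\delta/3$ by the time-$0$ law of a solution segment on $[-T,0]$ whose initial law $\nu_n$ lies in $B_0$. By $W_2$-compactness, pass to a subsequence with $\nu_n\to\nu$ in $W_2$.

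Now apply a version of Theorem \ref{thmT} with varying initial laws $\nu_n\to\nu$ and shifted coefficients $F_n$. The averaging condition \ref{item:H-a} holds uniformly in the shift $a$ and transfers to the hull, so the proof of Theorem \ref{thmT} goes through with tightness from the uniform $V$ and $2p$ moments. Any limit is then a variational martingale solution law of \eqref{eq:SPDEtwo} started at $\nu$. The initial law in Theorem \ref{thmT} was $L^4(V)$; here I would replace this by the $B_0$ moment bounds after one unit of smoothing time. Hence the time-$0$ marginal is within $\delta/3$ of $\bar{\cal A}$, contradicting $d_{BL}(\mu_n,\bar{\cal A})\ge\delta$.

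The delicate point is exactly this uniform finite-window averaging: varying initial laws, hull-uniform coefficients, and the need to identify the limit without uniqueness. I would handle it by the martingale-problem characterization used in Theorem \ref{thmT}, replacing $f(t/\var,\cdot)$ by its averages over windows of length $\var T_0$ via \eqref{wf} and \eqref{wg}.
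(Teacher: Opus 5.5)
Your architecture is the paper's. It has a $2p$-moment absorbing set that is uniform in $\var$ and over $\cal H(F_0)$; your $B_0$ also records the $V$-bound, which the paper instead recovers on a unit window via Lemma \ref{lemtwo}. It then uses $W_2$-compactness of absorbed laws and defines the attractors as $\omega$-limit sets of the absorbing set. For (3) it argues by contradiction: pull back to a segment $[-T,0]$ with initial laws in the absorbing set, extract a $W_2$-convergent subsequence, average on that window with shifted hull coefficients, and use attraction of $\bar{\cal A}$ at time $T$.

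One correction to Step 2: Definition \ref{def:weak-pullback-attractor-NS} asks for minimality (iii), not invariance. Minimality is immediate from the $\omega$-limit construction, since any closed set satisfying (ii) attracts $B_0\in\cal D_p$ and so contains all its limit points. The concatenation argument you sketch is not needed.

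The genuine gap is your claim that the proof of Theorem \ref{thmT} \emph{goes through} under the $B_0$ bounds. As written it does not. The freezing estimate \eqref{93:2} rests on \eqref{lemconabt}, i.e.\ Lemma \ref{Law-con}, whose proof uses $\E\norm{u(r)}_V^4$ in \eqref{33:4}. The tightness step \eqref{eq:thmT-stochastic-compactness-NS} also uses fourth $V$-moments. A unit of smoothing time cannot be relied on to restore this input: initial laws in $B_0$ need not have fourth moments, the dynamics need not create them, and $\E\norm{u}_V^4\ge\lambda_1^2\E\norm{u}^4$. For instance, take $f=g=0$ and $u_0=\xi e_1$, with $e_1$ as in Example \ref{no-uniqueness} and $Ae_1=\lambda e_1$. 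Then $u(t)=\xi e^{-\nu\lambda t}e_1$, which has no fourth moment when $\E\xi^4=\infty$.

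What replaces it is the argument of Corollary \ref{cor:finite-time-compact-law-averaging-NS}:
\begin{enumerate}
\item Get tightness in $C([0,T];H)$ from $\norm{B(u,u)}\le C\norm{u}_V\norm{Au}$, the bound on $\E\int_0^T\norm{Au}^2\,dt$ from Lemma \ref{lemtwo} with $p=1$, and an $H$-valued Kolmogorov estimate using $2p$-moments.
\item Upgrade tightness to the mean-square modulus \eqref{eq:compact-averaging-modulus-NS} via uniform integrability of $\sup_t\norm{U_n(t)}^2$; this is where $p>1$ is used.
\end{enumerate}
A modulus in probability would not suffice, because the freezing error contains $W_2\bigl(\Law{u(s)},\Law{u(k\delta)}\bigr)$, which is controlled only by $\bigl(\E\norm{u(s)-u(k\delta)}^2\bigr)^{1/2}$. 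With this modulus in place of \eqref{lemconabt}, your window averaging yields \eqref{eq:compact-averaging-residual-limit-NS}. This works whether the windows have length $\sqrt{\var_n}$ or length $\var_nT_0$ with $T_0\to\infty$ afterwards. The martingale identification then runs with $2p$-moments instead of fourth moments.
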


\section{The Weak First Bogolyubov Theorem}
In this section, we establish the weak first Bogolyubov theorem for \eqref{eq:SPDEone} on finite intervals. 
We work with variational martingale solutions, where the stochastic
basis and the driving Wiener process are regarded as part of the solution; 
see \cite{flandoli1995martingale,hong2024mckean}.

\begin{de}\label{def:variational-martingale-solution-NS}
Let \(\mu_0\in\cal P_2(H)\). A \emph{variational martingale solution}
of \eqref{eq:SPDEone} on \([s,t]\) is an adapted process \(u_\var\)
defined on a stochastic basis carrying a cylindrical Wiener process
\(W\) on \(U\), with \(\Law{u_\var(s)}=\mu_0\).
Moreover,
\[
u_\var\in L^2\bigl(\Omega;C([s,t];H)\bigr)
\cap L^2\bigl(\Omega;L^2(s,t;V)\bigr).
\]
For every \(\phi\in V\), almost surely for all \(r\in[s,t]\),
\[
\begin{aligned}
\inpro{u_\var(r),\phi}
&=\inpro{u_\var(s),\phi}
-\int_s^r
\left[
\nu\inpro{u_\var(q),\phi}_V
+b(u_\var(q),u_\var(q),\phi)
\right]dq\\
&\quad+
\int_s^r
\inpro{
f\left(\frac q\var,u_\var(q),\Law{u_\var(q)}\right),
\phi
}\,dq\\
&\quad+
\int_s^r
\inpro{
g\left(\frac q\var,u_\var(q),\Law{u_\var(q)}\right)dW_q,
\phi
}.
\end{aligned}
\]
The path law of \(u_\var\) is called a \emph{variational solution law}.
The same definition applies to \eqref{eq:SPDEtwo}, with \(f\) and \(g\)
replaced by \(\bar f\) and \(\bar g\).
\end{de}
We do not assume uniqueness in law for variational martingale solutions.

\subsection{A priori and continuity estimates}

\begin{lem}
\label{lemone}
Assume that \ref{item:H-f} and \ref{item:H-g}(i) hold.
Let $u_\var(t)$ be any variational martingale solution of
\eqref{eq:SPDEone}, with initial data $u_0$.
For any $p\ge1$ and $T>0$, if
$u_0\in L^{2p}(\Omega,\cal F_0,\P;H)$, then
\begin{equation}
\label{eq:SPDEthree}
\begin{aligned}
&\E\sup_{0\le t\le T}\norm{u_\var(t)}^{2p}
+
\E\int_0^T
\norm{u_\var(t)}^{2p-2}\norm{u_\var(t)}_V^2\,dt\\
&\qquad\le
C_T\left(1+\E\norm{u_0}^{2p}\right),
\end{aligned}
\end{equation}
where $C_T$ is independent of $\var$ and
the choice of solution.

If \ref{item:H-a} holds, the same estimate holds for any variational
martingale solution $\bar u(t)$ of \eqref{eq:SPDEtwo} with initial data
$\bar u_0\in L^{2p}(\Omega,\cal F_0,\P;H)$.
\end{lem}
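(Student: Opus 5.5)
We apply Itô's formula to $\norm{u_\var(t)}^{2}$ and then to $\norm{u_\var(t)}^{2p}=(\norm{u_\var(t)}^2)^p$, using the variational (Gelfand triple) version of the Itô formula for $V\subset H\subset V^*$. This is legitimate because $u_\var\in L^2(\Omega;C([0,T];H))\cap L^2(\Omega;L^2(0,T;V))$. The drift $-\nu Au_\var-B(u_\var,u_\var)+f$ lies in $L^{1}(0,T;V^*)$ pathwise, since $\norm{B(u,u)}_{V^*}\le C\norm{u}\norm{u}_V$ in two dimensions. The stochastic term is $H$-valued with Hilbert--Schmidt integrand.

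The cancellation \eqref{BzeroH} removes the nonlinearity. With $\inpro{Au,u}=\norm{u}_V^2$, we obtain
\[
d\norm{u_\var}^{2p}+2p\nu\norm{u_\var}^{2p-2}\norm{u_\var}_V^2\,dt
\le p\norm{u_\var}^{2p-2}\Bigl(2\inpro{f,u_\var}+(2p-1)\norm{g}_{L_2(U,H)}^2\Bigr)dt+dM_t,
\]
where $M_t=2p\int_0^t\norm{u_\var}^{2p-2}\inpro{u_\var,g\,dW}$. By \eqref{fcon}, \eqref{gcon} and Young's inequality, the drift on the right is bounded by
\[
C\Bigl(1+\norm{u_\var}^{2p}+\bigl(\E\norm{u_\var(t)}^2\bigr)^{p}\Bigr),
\]
with $C$ depending only on $p,L_f,K_f,L_g,K_g$, hence uniform in $\var$ and in the solution. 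Here $\mu(\norm{\cdot}^2)=\E\norm{u_\var(t)}^2$, and Jensen gives $(\E\norm{u_\var(t)}^2)^p\le\E\norm{u_\var(t)}^{2p}$. The law dependence is therefore absorbed into $\E\sup_{s\le t}\norm{u_\var(s)}^{2p}$.

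For the martingale, Burkholder--Davis--Gundy gives
\[
\E\sup_{s\le t}|M_s|\le C\,\E\Bigl(\int_0^t\norm{u_\var}^{4p-2}\norm{g}_{L_2}^2\,dr\Bigr)^{1/2}
\le \tfrac12\E\sup_{s\le t}\norm{u_\var}^{2p}+C\,\E\int_0^t\norm{u_\var}^{2p-2}\norm{g}^2_{L_2}\,dr .
\]
The last integral is again controlled by $C\int_0^t(1+\E\sup_{q\le r}\norm{u_\var(q)}^{2p})\,dr$.

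To make the absorption rigorous, we first localize with the stopping times $\tau_R=\inf\{t:\norm{u_\var(t)}>R\}\wedge T$. Stopping does not change the marginal-law terms, but these are deterministic functions of $t$ bounded via Jensen as above. A priori they are only known to be finite with $p=1$ from the solution class. So we run the argument for $\phi_R(t)=\E\sup_{s\le t\wedge\tau_R}\norm{u_\var(s)}^{2p}$, keeping $\E\norm{u_\var(r)}^{2p}$ inside the bound. Letting $R\to\infty$ by Fatou then yields
\[
\phi(t)\le C(1+\E\norm{u_0}^{2p})+C\int_0^t\phi(r)\,dr,\qquad \phi(t):=\E\sup_{s\le t}\norm{u_\var(s)}^{2p}.
\]
This step needs $\phi<\infty$ a priori. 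I expect it to be the main technical point. One remedy is to truncate the term $\E\norm{u_\var(r)}^{2p}$ by $\E(\norm{u_\var(r)}^{2p}\wedge N)$ and use monotone convergence. Alternatively, we can proceed inductively in $p$, first closing the estimate for $p=1$, where everything is finite, then using the power-$p$ Itô formula on the stopped process, where the law term $(\E\norm{u_\var}^2)^p$ is already bounded by the $p=1$ step. The second route avoids circularity entirely, and I would use it.

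Gronwall then gives the supremum bound. Returning to the integrated inequality, taking expectations (the stopped martingale has zero mean) yields the bound on $\E\int_0^T\norm{u_\var}^{2p-2}\norm{u_\var}_V^2\,dt$. Every constant depends only on $p$, $T$, $\nu$ and the constants in \ref{item:H-f}, \ref{item:H-g}(i), so $C_T$ is independent of $\var$ and of the chosen solution. For $\bar u$, Remark \ref{rem:averaged-coefficients-NS}(ii) shows that $\bar f,\bar g$ satisfy \eqref{fcon} and \eqref{gcon}, and the identical argument applies.
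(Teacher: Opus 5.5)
Your proposal is correct and follows essentially the paper's proof: the variational It\^o formula for $\norm{u_\var}^{2p}$, the cancellation \eqref{BzeroH}, the bounds \eqref{fcon}--\eqref{gcon}, the Burkholder--Davis--Gundy and Young inequalities to absorb half of $\E\sup\norm{u_\var}^{2p}$, localization by exit times, Gronwall, and Fatou. The only variation is how a priori finiteness is secured: the paper keeps $\int_0^T(\E\norm{u_\var(s)}^2)^p\,ds$ as a separate term (finite since $u_\var\in L^2(\Omega;C([0,T];H))$), first gets $\E\sup_{t\le T}\norm{u_\var(t)}^{2p}<\infty$, and then reruns the estimate using Jensen, whereas your bootstrap from the closed $p=1$ bound controls that term uniformly in one pass; one small correction is that the Gelfand-triple It\^o formula needs the drift in $L^2(0,T;V^*)$, not merely $L^1$, which your estimate $\norm{B(u,u)}_{V^*}\le C\norm{u}\norm{u}_V$ does provide.
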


\begin{proof}
We only prove the estimate for $u_\var(t)$. The proof for $\bar u(t)$ is identical.

Applying the variational It\^o formula to $\norm{u_\var(t)}^{2p}$,
we obtain, for $0<t\le T$ and $p\ge1$,
\begin{align}
\label{HitoNS}
\norm{u_\var(t)}^{2p}
&=
\norm{u_0}^{2p}
-2p\nu\int_0^t
\norm{u_\var(s)}^{2p-2}
\inpro{Au_\var(s),u_\var(s)}\,ds
\notag\\
&\quad
-2p\int_0^t
\norm{u_\var(s)}^{2p-2}
\inpro{B(u_\var(s),u_\var(s)),u_\var(s)}\,ds
\notag\\
&\quad
+2p\int_0^t
\norm{u_\var(s)}^{2p-2}
\inpro{f\left(\frac{s}{\var},u_\var(s),\Law{u_\var(s)}\right),u_\var(s)}\,ds
\notag\\
&\quad
+p\int_0^t
\norm{u_\var(s)}^{2p-2}
\norm{g\left(\frac{s}{\var},u_\var(s),\Law{u_\var(s)}\right)}_{L_2(U,H)}^2\,ds
\notag\\
&\quad
+2p(p-1)\int_0^t
\norm{u_\var(s)}^{2p-4}
\norm{g\left(\frac{s}{\var},u_\var(s),\Law{u_\var(s)}\right)^*u_\var(s)}_U^2\,ds
+M_t,
\end{align}
where
\[
M_t=
2p\int_0^t
\norm{u_\var(s)}^{2p-2}
\inpro{g\left(\frac{s}{\var},u_\var(s),\Law{u_\var(s)}\right)dW_s,u_\var(s)},
\]
and $M_t$ is a continuous local martingale.
It follows from \eqref{gcon} that
\begin{align}\label{31:3}
\norm{u_\var(s)}^{2p-4}\norm{g\left(\frac{s}{\var},u_\var(s),\Law{u_\var(s)}\right)^*u_\var(s)}_U^2
&\le\norm{g\left(\frac{s}{\var},u_\var(s),\Law{u_\var(s)}\right)}_{L_2(U,H)}^2\norm{u_\var(s)}^{2p-2}\notag\\
&\le C\norm{u_\var(s)}^{2p-2}(1+\norm{u_\var(s)}^2+\E\norm{u_\var(s)}^2).
\end{align}
Then by \eqref{BzeroH}, \eqref{fcon}, \eqref{HitoNS} and \eqref{31:3}, we have
\begin{align}
\label{HitoNSineq}
\norm{u_\var(t)}^{2p}
&+
2p\nu\int_0^t
\norm{u_\var(s)}^{2p-2}\norm{u_\var(s)}_V^2\,ds
\notag\\
&\le
\norm{u_0}^{2p}
+
C\int_0^t
\norm{u_\var(s)}^{2p-2}\left(
1+\norm{u_\var(s)}^{2}
+\E\norm{u_\var(s)}^{2}
\right)ds
+M_t.
\end{align}
In view of the Burkholder–Davis–Gundy inequality and Young's inequality, we have
\begin{align}
\label{HmartNS}
\E{\sup\limits_{t\in [0,T]}|M_t|}
&\le C\E\left(\int_{0}^{T} \norm{u_\var(s)}^{4p-4}\norm{g\left(\frac{s}{\var},u_\var(s),\Law{u_\var(s)}\right)^*u_\var(s)}_U^2\,ds \right)^{\frac{1}{2}} \notag \\
&\le C\E\left(\int_{0}^{T}\norm{u_\var(s)}^{4p-2}(1+\norm{u_\var(s)}^2+\E\norm{u_\var(s)}^2)\,ds\right)^{\frac{1}{2}}\notag\\
&\le C\E\left[\sup_{0\le t\le T}\norm{u_\var(t)}^{2p-1}\left(\int_{0}^{T}(1+\norm{u_\var(s)}^2+\E\norm{u_\var(s)}^2)\,ds\right)^\frac{1}{2}\right]\notag\\
&\le \frac{1}{2}\E\sup_{0\le t\le T}\norm{u_\var(t)}^{2p}+C\E\left(\int_{0}^{T}(1+\norm{u_\var(s)}^2+\E\norm{u_\var(s)}^2)\,ds\right)^p\notag\\
&\le \frac{1}{2}\E\sup_{0\le t\le T}\norm{u_\var(t)}^{2p}+C_{T,p}\int_{0}^{T}\left[1+\E\left(\sup_{0\le r\le s}\norm{u_\var(r)}^{2p}\right)\right]\,ds.
\end{align}
For \(N\in\mathbb N\), define
\[
\tau_N
:=
\inf\left\{t\in[0,T]:\norm{u_\var(t)}\ge N\right\}\wedge T.
\]
Applying \eqref{HitoNSineq} up to \(\tau_N\) and estimating the stochastic integral as in the derivation of \eqref{HmartNS}, H\"older's and Young's inequalities give
\[
\begin{aligned}
\E\sup_{0\le t\le T}\norm{u_\var(t\wedge\tau_N)}^{2p}
\le C_{T,p}\Bigg[
1+\E\norm{u_0}^{2p}
&+\int_0^T
\E\sup_{0\le r\le s}\norm{u_\var(r\wedge\tau_N)}^{2p}\,ds\\
&\quad\qquad+\int_0^T
\left(\E\norm{u_\var(s)}^2\right)^p\,ds
\Bigg].
\end{aligned}
\]
Hence, by Gronwall's inequality, we have
\[
\E\sup_{0\le t\le T}\norm{u_\var(t\wedge\tau_N)}^{2p}
\le
C_{T,p}\left[
1+\E\norm{u_0}^{2p}
+\int_0^T
\left(\E\norm{u_\var(s)}^2\right)^p\,ds
\right].
\]
Since \(u_\var\in L^2(\Omega;C([0,T];H))\), letting \(N\to\infty\) and using Fatou's lemma yield
\[
\E\sup_{0\le t\le T}\norm{u_\var(t)}^{2p}
\le
C_{T,p}\left[
1+\E\norm{u_0}^{2p}
+\int_0^T
\left(\E\norm{u_\var(s)}^2\right)^p\,ds
\right]
<\infty.
\]
By \eqref{HitoNSineq}, \eqref{HmartNS}, Young's inequality, and H\"older's inequality, 
\[
\E\sup_{0\le t\le T}\norm{u_\var(t)}^{2p}
\le
2\E\norm{u_0}^{2p}
+
C_T\int_0^T
\left(
1+\E\sup_{0\le r\le s}\norm{u_\var(r)}^{2p}
\right)ds.
\]
Gronwall's inequality gives
\[
\E\left(\sup_{0\le t\le T}\norm{u_\var(t)}^{2p}\right)\le
C_T\left(1+\E\norm{u_0}^{2p}\right).
\]
Taking $t=T$ in \eqref{HitoNSineq}, we also obtain
\[
\E\int_0^T
\norm{u_\var(s)}^{2p-2}\norm{u_\var(s)}_V^2\,ds
\le
C_T\left(1+\E\norm{u_0}^{2p}\right).
\]
The proof is complete.
\end{proof}

The following estimate is proved by Galerkin approximation in the appendix.
\begin{lem}
\label{lemtwo}
Assume that \ref{item:H-f} and \ref{item:H-g} hold.
Let $u_\var(t)$ be any variational martingale solution of
\eqref{eq:SPDEone}, with initial data $u_0$.
For any $p\ge1$ and $T>0$, if
$u_0\in L^{2p}(\Omega,\cal F_0,\P;V)$, then
\[
\E\sup_{0\le t\le T}\norm{u_\var(t)}_V^{2p}
+
\E\int_0^T
\norm{u_\var(t)}_V^{2p-2}\norm{Au_\var(t)}^2\,dt
\le
C_T\left(1+\E\norm{u_0}_V^{2p}\right),
\]
where $C_T$ is independent of $\var$ and the choice of solution.

If \ref{item:H-a} holds, the same estimate holds for any variational
martingale solution $\bar u(t)$ of \eqref{eq:SPDEtwo} with initial data
$\bar u_0\in L^{2p}(\Omega,\cal F_0,\P;V)$.
\end{lem}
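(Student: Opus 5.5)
The plan is to derive the estimate at the Galerkin level with constants independent of $N$, $\var$ and the solution, and then pass to the given solution by lower semicontinuity. The variational martingale solution only lives in $L^2(\Omega;L^2(0,T;V))$, so Itô's formula cannot be applied directly to $\norm{u_\var}_V^{2p}$. The key structural input is the enstrophy cancellation \eqref{BzeroV}.

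\textbf{Step 1: finite-dimensional system.} Let $P_N$ be the orthogonal projection onto the span of the first $N$ Stokes eigenfunctions. Freeze the marginal law flow $\mu_t:=\Law{u_\var(t)}$ of the given solution, so that $\mu_t$ is a fixed deterministic input with $\sup_{t\le T}\mu_t(\norm{\cdot}^2)$ controlled by Lemma \ref{lemone}. On the stochastic basis of the solution, consider the linear-in-law Galerkin system
\[
du^N+\bigl[\nu Au^N+P_NB(u^N,u^N)\bigr]dt=P_Nf\bigl(\tfrac t\var,u^N,\mu_t\bigr)dt+P_Ng\bigl(\tfrac t\var,u^N,\mu_t\bigr)dW_t ,
\qquad u^N(0)=P_Nu_0 .
\]
Since $\mu_t$ is frozen, this is a locally Lipschitz SDE in $P_NH$. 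Global existence follows from the $H$-estimate of Lemma \ref{lemone}, which holds verbatim here.

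\textbf{Step 2: $V$-energy estimate.} Apply Itô's formula to $\norm{u^N}_V^{2p}=\inpro{Au^N,u^N}^p$.
\begin{itemize}
\item The viscous term gives $-2p\nu\norm{u^N}_V^{2p-2}\norm{Au^N}^2$.
\item The nonlinear term vanishes by \eqref{BzeroV}, since $P_N$ commutes with $A$ and $u^N\in D(A)$.
\item The drift term is bounded by
\[
2p\norm{u^N}_V^{2p-2}\,\norm{f}\,\norm{Au^N}\le p\nu\norm{u^N}_V^{2p-2}\norm{Au^N}^2+C\norm{u^N}_V^{2p-2}\norm{f}^2 .
\]
Here $\norm{f}^2\le C\bigl(1+\norm{u^N}^2+\mu_t(\norm{\cdot}^2)\bigr)$ by \eqref{fcon}, and $\norm{u^N}\le\lambda_1^{-1/2}\norm{u^N}_V$ by Poincaré.
\item For the Itô correction terms, use \ref{item:H-g}(ii):
\[
\norm{g(t,x,\mu)}_{L_2(U,V)}\le K_g'+L_g'\bigl(\norm{x}_V+W_{2,V}(\mu,\delta_0)\bigr),
\]
which contributes $C\norm{u^N}_V^{2p-2}\bigl(1+\norm{u^N}_V^2+\mu_t(\norm{\cdot}_V^2)\bigr)$.
\end{itemize}
The martingale term is handled by BDG and Young exactly as in \eqref{HmartNS}, absorbing $\tfrac12\E\sup\norm{u^N}_V^{2p}$. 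Localize with stopping times $\tau_R=\inf\{t:\norm{u^N(t)}_V\ge R\}$ and use Gronwall. This yields a bound with $C_T$ independent of $N$ and $\var$, but containing $\int_0^T\bigl(\mu_t(\norm{\cdot}_V^2)\bigr)^p\,dt$.

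\textbf{Step 3: the law term — main obstacle.} The coefficient $\mu_t(\norm{\cdot}_V^2)$ involves the $V$-moment of the original solution, which is not known a priori; it may even be infinite for a.e.\ $t$ beyond the integrability $\E\int_0^T\norm{u_\var}_V^2\,dt<\infty$. The first attempt is to keep the term as is. By Lemma \ref{lemone} with $p=1$ it is at least integrable in time when $p=1$. Once Step 4 identifies the limit with $u_\var$, one has $\mu_t(\norm{\cdot}_V^2)\le\liminf_N\E\norm{u^N(t)}_V^2$ for each $t$. Closing with Gronwall on $\phi(t)=\E\sup_{r\le t}\norm{u_\var(r)}_V^{2p}$, using Jensen $(\E\norm{u}_V^2)^p\le\E\norm{u}_V^{2p}$, gives a self-consistent inequality
\[
\phi(t)\le C\bigl(1+\E\norm{u_0}_V^{2p}\bigr)+C\int_0^t\phi(s)\,ds .
\]
To make this rigorous without presupposing finiteness, I will truncate the law term. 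Replace $\mu_t$ in the $V$-Lipschitz bound by $\min\{\mu_t(\norm{\cdot}_V^2),M\}$ in the bookkeeping; more precisely, run the argument on $[0,t\wedge\sigma_M]$ with $\sigma_M=\inf\{t:\E\norm{u_\var(t)}_V^2>M\}$ (deterministic). Then let $M\to\infty$ using the time-continuity that the bound itself provides.

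\textbf{Step 4: identification of the limit.} Show $u^N\to u_\var$. The difference $w=u^N-P_Nu_\var$ satisfies an equation with the same frozen law, so Lipschitz continuity of $f,g$ in $x$ applies. The bilinear term is controlled by \eqref{bloc}, using the $L^2(0,T;V)$ regularity of $u_\var$ and a stopping time on $\int_0^t\norm{u_\var}_V^2\,ds$; the usual 2D pathwise-uniqueness argument for given drift then gives $u^N\to u_\var$ in $C([0,T];H)$ in probability. The uniform bounds pass to $u_\var$ by Fatou and weak lower semicontinuity of $\norm{\cdot}_V$ (extended by $+\infty$ off $V$). The averaged equation \eqref{eq:SPDEtwo} is treated identically, using Remark \ref{rem:averaged-coefficients-NS}(ii) for $\bar g$.
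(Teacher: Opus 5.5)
Your overall architecture is sound, but the step you flag as the main obstacle is not resolved as written.

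The ``self-consistent'' inequality $\phi(t)\le C(1+\E\norm{u_0}_V^{2p})+C\int_0^t\phi(s)\,ds$ gives nothing unless $\phi$ is already known to be finite. For instance, $\phi\equiv+\infty$ on $(t^*,T]$ satisfies it.

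The truncation via $\sigma_M=\inf\{t:\E\norm{u_\var(t)}_V^2>M\}$ is circular. A priori, $t\mapsto\E\norm{u_\var(t)}_V^2$ is only lower semicontinuous and integrable. So $\sigma_M\uparrow T$ requires $\sup_{s\le t}\E\norm{u_\var(s)}_V^2<\infty$ for $t<T$, which is essentially the $p=1$ case of the claim. Restarting at $\sigma_M$ reintroduces the uncontrolled term $\int_{\sigma_M}^{\sigma_M+\delta}\bigl(\mu_t(\norm{\cdot}_V^2)\bigr)^p\,dt$, so there is no ``time-continuity'' to exploit. Replacing $\mu_t(\norm{\cdot}_V^2)$ by $\min\{\mu_t(\norm{\cdot}_V^2),M\}$ ``in the bookkeeping'' is not legitimate either, since \ref{item:H-g}(ii) involves the true $W_{2,V}(\mu_t,\delta_0)$.

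The missing idea is a two-stage bootstrap, which is exactly what the paper does. For $p=1$ your Galerkin estimate closes with no truncation. The only law term is $\int_0^T\mu_t(\norm{\cdot}_V^2)\,dt=\E\int_0^T\norm{u_\var}_V^2\,dt\le C_T(1+\E\norm{u_0}^2)$, by Lemma \ref{lemone}. Your Step 4 then gives $\sup_{t\le T}\mu_t(\norm{\cdot}_V^2)\le\E\sup_{t\le T}\norm{u_\var(t)}_V^2\le C_T(1+\E\norm{u_0}_V^2)$. Only after that, for $p>1$, is the law term bounded: by $T\,[C_T(1+\E\norm{u_0}_V^2)]^p\le C_T(1+\E\norm{u_0}_V^{2p})$, using Jensen.

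With this repair your route is valid, and it differs from the paper's in the Galerkin scheme. The paper freezes the entire forcing along the given solution, using $P_nf(\frac t\var,u_\var(t),\Law{u_\var(t)})$ and $P_ng(\frac t\var,u_\var(t),\Law{u_\var(t)})$. This has two consequences:
\begin{itemize}
\item The Galerkin $V$-bound needs only $\E\int_0^T\norm{u_\var}_V^2\,dt$.
\item $u_n-P_nu_\var$ solves a random ODE with no stochastic integral, so convergence in $C([0,T];H)$ follows pathwise from a deterministic Gronwall argument.
\end{itemize}
The price is that the $2p$-bound cannot be closed at the Galerkin level, because the forcing carries $\norm{u_\var}_V^{2p}$-moments. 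So the paper first proves \eqref{eq:V-basic-regularity-NS}, then constructs a $C([0,T];V)$ version and applies It\^o's formula to $\norm{u_\var}_V^{2p}$ directly.

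Freezing only the law, as you do, has a cost: a genuine SDE and a stochastic comparison with random Gronwall coefficient $\norm{u_\var}_V^2$, which gives only convergence in probability. In exchange, once the $p=1$ bound controls the law term, you can carry out the $2p$-estimate entirely in finite dimensions. This avoids the infinite-dimensional It\^o formula in $V$.
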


By the Poincar\'e inequality, $\norm{\cdot}_V$ and
$\norm{\cdot}_{H^1}$ are equivalent norms on $V$, and hence
\begin{align}
\label{Hnorm}
\sup_{0\le t\le T}\E \norm{u_\var(t)}^{2p}_{H^1}\le C_T(1+\E{\norm{u_0}_{H^1}^{2p}}).
\end{align}

\begin{lem}
\label{Law-con}
Assume that \ref{item:H-f} and \ref{item:H-g} hold.
Let $u_\var(t)$ be any variational martingale solution of
\eqref{eq:SPDEone} with initial data
$u_0\in L^4(\Omega,\cal F_0,\P;V)$.
Then, for every $T>0$, there exists a constant $C_T>0$,
independent of $\var$ and the choice of solution, such that
for any $0\le t_1\le t_2\le T$,
\begin{align*}
\E\norm{u_\var(t_2)-u_\var(t_1)}^2
\le
C_T(t_2-t_1).
\end{align*}

If \ref{item:H-a} also holds, the same result holds for any
variational martingale solution $\bar u$ of \eqref{eq:SPDEtwo}
with initial data $\bar u_0\in L^4(\Omega,\cal F_0,\P;V)$.
\end{lem}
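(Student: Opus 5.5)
We estimate the increment directly from the weak formulation, tested against the difference itself. Since $u_\var\in L^2(\Omega;L^2(0,T;V))$ and the equation holds in $V^*$, we can write
\[
u_\var(t_2)-u_\var(t_1)
=-\int_{t_1}^{t_2}\bigl[\nu Au_\var(q)+B(u_\var(q),u_\var(q))\bigr]dq
+\int_{t_1}^{t_2}f\Bigl(\frac q\var,u_\var(q),\Law{u_\var(q)}\Bigr)dq
+\int_{t_1}^{t_2}g\Bigl(\frac q\var,u_\var(q),\Law{u_\var(q)}\Bigr)dW_q
\]
as an identity in $V^*$. By Lemma \ref{lemtwo} with $p=1$, the right-hand side actually lies in $H$ almost surely: $Au_\var\in L^2(0,T;H)$. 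So we may simply take $H$-norms of each term and apply $\E\norm{\cdot}^2$, using $(a+b+c+d)^2\le4(a^2+b^2+c^2+d^2)$.

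\textbf{Linear term.} By Cauchy--Schwarz in time,
\[
\E\Bigl\|\int_{t_1}^{t_2}Au_\var\,dq\Bigr\|^2\le(t_2-t_1)\,\E\int_0^T\norm{Au_\var}^2dq .
\]
Lemma \ref{lemtwo} with $p=1$ bounds the last integral by $C_T(1+\E\norm{u_0}_V^2)$.

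\textbf{Drift $f$.} Use \eqref{fcon} together with Lemma \ref{lemone} to get $\E\norm{f}^2\le C_T$ uniformly in $q$. Cauchy--Schwarz then gives a bound $C_T(t_2-t_1)^2\le C_T T(t_2-t_1)$.

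\textbf{Noise.} By the It\^o isometry and \eqref{gcon},
\[
\E\Bigl\|\int_{t_1}^{t_2}g\,dW\Bigr\|^2=\int_{t_1}^{t_2}\E\norm{g}_{L_2(U,H)}^2\,dq\le C_T(t_2-t_1).
\]

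\textbf{Nonlinearity (the main obstacle).} This is the only place where $L^4$ initial data are needed. In two dimensions, Ladyzhenskaya/Agmon-type estimates give
\[
\norm{B(u,u)}\le C\norm{u}_{L^4}\norm{\nabla u}_{L^4}\le C\norm{u}^{1/2}\norm{u}_V\norm{Au}^{1/2}.
\]
Cauchy--Schwarz in time then yields
\[
\E\Bigl\|\int_{t_1}^{t_2}B\,dq\Bigr\|^2\le(t_2-t_1)\,\E\int_0^T\norm{u}\norm{u}_V^2\norm{Au}\,dq .
\]
We bound the integrand by $\norm{u}^2\norm{u}_V^2+\norm{u}_V^2\norm{Au}^2$. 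The second piece is controlled by Lemma \ref{lemtwo} with $p=2$, which requires exactly $u_0\in L^4(\Omega;V)$. The first piece is at most $T\,\E\sup_t\norm{u}_V^4\cdot\lambda_1^{-1}$ by Poincar\'e, again by Lemma \ref{lemtwo} with $p=2$.

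Collecting the four estimates gives $\E\norm{u_\var(t_2)-u_\var(t_1)}^2\le C_T(t_2-t_1)$. The constant is independent of $\var$ and of the solution, because the constants in Lemmas \ref{lemone}--\ref{lemtwo}, \eqref{fcon} and \eqref{gcon} are.

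For $\bar u$ the argument is identical, since $\bar f,\bar g$ satisfy the same bounds by Remark \ref{rem:averaged-coefficients-NS}(ii).

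If one prefers to avoid the strong $H$-valued identity, the same argument can be run on Galerkin projections $P_n$ applied to the difference, followed by letting $n\to\infty$ via Fatou. This is a technical alternative only.
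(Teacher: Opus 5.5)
Your proof is correct, but it takes a different route from the paper's. The paper applies It\^o's formula to $\norm{u_\var(t)-u_\var(t_1)}^2$ on $[t_1,t_2]$ and works entirely in the $V$--$V^*$ duality:
the Stokes term is handled by the polarization identity $-2\inpro{a,a-b}_V\le\norm{b}_V^2$, which leaves only $\nu\norm{u_\var(t_1)}_V^2$;
the nonlinear term is reduced via \eqref{BzeroH} and \eqref{bskew} to $b(u_\var(r),u_\var(t_1),u_\var(r))$ and bounded with \eqref{bloc} by $C\norm{u_\var(t_1)}_V\norm{u_\var(r)}_V^2$;
the drift is treated by Young's inequality, which produces $\int_{t_1}^{t}\E\norm{u_\var(r)-u_\var(t_1)}^2dr$, and this is absorbed by Gronwall.
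You instead use the strong $H$-valued form of the equation, estimate each increment separately by Cauchy--Schwarz in time and the It\^o isometry, and control $B$ through the Ladyzhenskaya bound $\norm{B(u,u)}\le C\norm{u}^{1/2}\norm{u}_V\norm{Au}^{1/2}$.

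The trade-off is as follows. Your version needs the $D(A)$-part of Lemma \ref{lemtwo}, namely $\E\int_0^T\norm{Au_\var}^2dt$ and $\E\int_0^T\norm{u_\var}_V^2\norm{Au_\var}^2dt$, together with the two-dimensional interpolation inequality. The paper needs only moment bounds on $\norm{u_\var(t)}_V$ (essentially $\E\norm{u_\var(r)}_V^4$ and $\E\norm{u_\var(t_1)}_V^2$) plus the algebraic cancellations of $A$ and $B$. In exchange, you avoid both the It\^o formula for the increment and the Gronwall step. You also obtain slightly more: a $W^{1,2}(0,T;H)$-type bound on the drift part of $u_\var$. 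This is essentially the estimate the paper itself uses later for path tightness in Lemma \ref{lem:finite-window-law-stability-NS}.
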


\begin{proof}
Fix $0\le t_1\le t_2\le T$.
Applying It\^o's formula, we obtain
\begin{align}
\label{33:1}
\E\norm{u_\var(t_2)-u_\var(t_1)}^2
&=
-2\nu\E\int_{t_1}^{t_2}
\inpro{Au_\var(r),u_\var(r)-u_\var(t_1)}\,dr
\notag\\
&\quad
-2\E\int_{t_1}^{t_2}
\inpro{B(u_\var(r),u_\var(r)),u_\var(r)-u_\var(t_1)}\,dr
\notag\\
&\quad
+2\E\int_{t_1}^{t_2}
\inpro{
f\left(\frac{r}{\var},u_\var(r),\Law{u_\var(r)}\right),
u_\var(r)-u_\var(t_1)
}\,dr
\notag\\
&\quad
+\E\int_{t_1}^{t_2}
\norm{
g\left(\frac{r}{\var},u_\var(r),\Law{u_\var(r)}\right)
}_{L_2(U,H)}^2\,dr.
\end{align}
For the Stokes term, using
$-2\inpro{a,a-b}=-\norm{a}^2-\norm{a-b}^2+\norm{b}^2$,
with $a=u_\var(r)$ and $b=u_\var(t_1)$ in the $V$-inner product, we have
\begin{align}\label{33:2}
-2\nu\inpro{Au_\var(r),u_\var(r)-u_\var(t_1)}
\le
\nu\norm{u_\var(t_1)}_V^2.
\end{align}
By \eqref{BzeroH}, \eqref{bskew}, \eqref{bloc}, and the
Poincar\'e inequality, one has
\begin{align}\label{33:3}
\left|-\inpro{B(u_\var(r),u_\var(r)),u_\var(r)-u_\var(t_1)}\right|
&=
\left|\inpro{B(u_\var(r),u_\var(r)),u_\var(t_1)}\right| \notag\\
&=
\left|
b(u_\var(r),u_\var(t_1),u_\var(r))
\right| \notag\\
&\le
C\norm{u_\var(t_1)}_V
\norm{u_\var(r)}
\norm{u_\var(r)}_V  \notag\\
&\le
C\norm{u_\var(t_1)}_V
\norm{u_\var(r)}_V^2 .
\end{align}
By \eqref{33:2}, \eqref{33:3}, the Cauchy--Schwarz inequality and Lemma \ref{lemtwo}, 
\begin{align}\label{33:4}
&-2\nu\E\int_{t_1}^{t_2}
\inpro{Au_\var(r),u_\var(r)-u_\var(t_1)}\,dr
-2\E\int_{t_1}^{t_2}
\inpro{B(u_\var(r),u_\var(r)),u_\var(r)-u_\var(t_1)}\,dr\notag\\
&\qquad\le C_T(t_2-t_1)+
C\int_{t_1}^{t_2}
\E\left(
\norm{u_\var(r)}_V^2
\norm{u_\var(t_1)}_V
\right)dr \notag\\
&\qquad\le C_T(t_2-t_1)+
C\int_{t_1}^{t_2}
\left(\E\norm{u_\var(r)}_V^4\right)^{1/2}
\left(\E\norm{u_\var(t_1)}_V^2\right)^{1/2}
dr \notag\\
&\qquad\le C_T(t_2-t_1).
\end{align}
In view of \eqref{fcon}, \eqref{gcon}, and Young's inequality, we obtain
\begin{align}\label{33:5}
&2\inpro{
f\left(\frac{r}{\var},u_\var(r),\Law{u_\var(r)}\right),
u_\var(r)-u_\var(t_1)
}+\norm{g\left(\frac{r}{\var},u_\var(r),\Law{u_\var(r)}\right)
}_{L_2(U,H)}^2\notag\\
&\qquad\le
\norm{u_\var(r)-u_\var(t_1)}^2
+
C\left(
1+\norm{u_\var(r)}^2+\E\norm{u_\var(r)}^2
\right).
\end{align}
Substituting \eqref{33:4}, \eqref{33:5} into \eqref{33:1} and using Lemma \ref{lemone}, we obtain
\[
\E\norm{u_\var(t_2)-u_\var(t_1)}^2
\le
C_T(t_2-t_1)
+
C\int_{t_1}^{t_2}\E\norm{u_\var(r)-u_\var(t_1)}^2\,dr.
\]
By Gronwall's inequality,
\begin{align}\label{lemconabt}
\E\norm{u_\var(t_2)-u_\var(t_1)}^2\le C_{T}(t_2-t_1), \qquad 0\le t_1\le t_2\le T.
\end{align}

By Remark \ref{rem:averaged-coefficients-NS}(ii), 
the same estimate holds for $\bar u$.
\end{proof}

\subsection{Proof of the weak first Bogolyubov theorem}
\begin{proof}[Proof of Theorem \ref{thmT}]
Fix \(T>0\). By Lemma \ref{lemtwo} with \(p=2\), we have
\begin{equation}
\label{eq:thmT-uniform-strong-estimates-NS}
\begin{aligned}
\sup_{0<\var\le1}
\E\sup_{0\le t\le T}\norm{u_\var(t)}_V^4
+
\sup_{0<\var\le1}
\E\int_0^T
\norm{u_\var(t)}_V^2\norm{Au_\var(t)}^2\,dt\le
C_T\left(1+\E\norm{u_0}_V^4\right).
\end{aligned}
\end{equation}
By the Burkholder--Davis--Gundy inequality,
\ref{item:H-g}(ii), and
\eqref{eq:thmT-uniform-strong-estimates-NS}, for
\(0\le s\le t\le T\),
\[
\begin{aligned}
&\E\norm{
\int_s^t
g\left(
\frac{r}{\var},u_\var(r),\Law{u_\var(r)}
\right)dW_r
}_V^4
\\
\le&
C\E\left(
\int_s^t
\norm{
g\left(
\frac{r}{\var},u_\var(r),\Law{u_\var(r)}
\right)
}_{L_2(U,V)}^2\,dr
\right)^2
\\
\le&
C(t-s)
\int_s^t
\left(
1+\E\norm{u_\var(r)}_V^4
\right)dr
\le
C_T(t-s)^2.
\end{aligned}
\]
Hence, for every \(0<\gamma<1/4\), Kolmogorov's continuity criterion
gives
\begin{equation}
\label{eq:thmT-stochastic-compactness-NS}
\sup_{0<\var\le1}
\E
\norm{
\int_0^\cdot
g\left(
\frac{r}{\var},u_\var(r),\Law{u_\var(r)}
\right)dW_r
}_{C^\gamma([0,T];V)}^4
<\infty.
\end{equation}
By \eqref{eq:SPDEone},
\[
\frac{d}{dt}
\left[
u_\var(t)
-
\int_0^t
g\left(
\frac{r}{\var},u_\var(r),\Law{u_\var(r)}
\right)dW_r
\right]
=
-\nu Au_\var(t)
-
B(u_\var(t),u_\var(t))
+
f\left(
\frac{t}{\var},u_\var(t),\Law{u_\var(t)}
\right)
\]
in \(V^*\). Since for $u\in V$,
$\norm{Au}_{V^*}=\norm{u}_V$, $\norm{B(u,u)}_{V^*}\le C\norm{u}_V^2$.
Then, \eqref{fcon},
\eqref{eq:thmT-uniform-strong-estimates-NS}, and Lemma
\ref{lemone} yield
\begin{equation}
\label{eq:thmT-deterministic-compactness-NS}
\sup_{0<\var\le1}
\E
\norm{
u_\var(\cdot)
-
\int_0^\cdot
g\left(
\frac{r}{\var},u_\var(r),\Law{u_\var(r)}
\right)dW_r
}_{W^{1,2}(0,T;V^*)}^2
<\infty.
\end{equation}
By \eqref{eq:thmT-uniform-strong-estimates-NS},
\eqref{eq:thmT-stochastic-compactness-NS}, and
\eqref{eq:thmT-deterministic-compactness-NS},
\begin{equation}
\label{eq:thmT-V-compact-containment-NS}
\begin{aligned}
\lim_{R\to\infty}
\sup_{0<\var\le1}
\P\Bigg(
&
\sup_{0\le t\le T}
\norm{
u_\var(t)
-
\int_0^t
g\left(
\frac{r}{\var},u_\var(r),\Law{u_\var(r)}
\right)dW_r
}_V
\\
&+
\norm{
u_\var(\cdot)
-
\int_0^\cdot
g\left(
\frac{r}{\var},u_\var(r),\Law{u_\var(r)}
\right)dW_r
}_{W^{1,2}(0,T;V^*)}
\\
&+
\norm{
\int_0^\cdot
g\left(
\frac{r}{\var},u_\var(r),\Law{u_\var(r)}
\right)dW_r
}_{C^\gamma([0,T];V)}
>R
\Bigg)
=0.
\end{aligned}
\end{equation}
By the interpolation inequality between \(V\), \(H\), and \(V^*\),
and the Cauchy--Schwarz inequality,
\[
\begin{aligned}
&\norm{
\left(
u_\var(t)
-
\int_0^t
g\left(
\frac{r}{\var},u_\var(r),\Law{u_\var(r)}
\right)dW_r
\right)
-
\left(
u_\var(s)
-
\int_0^s
g\left(
\frac{r}{\var},u_\var(r),\Law{u_\var(r)}
\right)dW_r
\right)
}
\\
&\qquad\le
C
\norm{
u_\var(\cdot)
-
\int_0^\cdot
g\left(
\frac{r}{\var},u_\var(r),\Law{u_\var(r)}
\right)dW_r
}_{L^\infty(0,T;V)}^{1/2}
\\
&\qquad\quad\times
\norm{
u_\var(\cdot)
-
\int_0^\cdot
g\left(
\frac{r}{\var},u_\var(r),\Law{u_\var(r)}
\right)dW_r
}_{W^{1,2}(0,T;V^*)}^{1/2}
|t-s|^{1/4}.
\end{aligned}
\]
Therefore, by the compact embedding \(V\hookrightarrow H\),
the Arzelà--Ascoli theorem and
\eqref{eq:thmT-V-compact-containment-NS} give
\begin{equation}
\label{eq:thmT-path-tightness-NS}
\left\{
\Law{u_\var(\cdot)}:0<\var\le1
\right\}
\quad\text{is tight in }
\cal P(C([0,T];H)).
\end{equation}
Together with Prokhorov's theorem,
\begin{equation}
\label{eq:thmT-relatively-compact-NS}
\left\{
\Law{u_\var(\cdot)}:0<\var\le1
\right\}
\quad\text{is relatively compact in }
\cal P(C([0,T];H)).
\end{equation}

Fix \(0<\delta<1\). For
\(0\le k\le \lfloor T/\delta\rfloor-1\) and
\(s\in[k\delta,(k+1)\delta)\), we decompose
\begin{align}\label{93:1}
f\left(
\frac{s}{\var},u_\var(s),\Law{u_\var(s)}
\right)
-
\bar f\left(
u_\var(s),\Law{u_\var(s)}
\right)
&=
f\left(
\frac{s}{\var},u_\var(s),\Law{u_\var(s)}
\right)
-
f\left(
\frac{s}{\var},u_\var(k\delta),\Law{u_\var(k\delta)}
\right)
\notag\\
&\quad+
f\left(
\frac{s}{\var},u_\var(k\delta),\Law{u_\var(k\delta)}
\right)
-
\bar f\left(
u_\var(k\delta),\Law{u_\var(k\delta)}
\right)\notag\\
&\quad+
\bar f\left(
u_\var(k\delta),\Law{u_\var(k\delta)}
\right)
-
\bar f\left(
u_\var(s),\Law{u_\var(s)}
\right)\notag\\
&=:A_1(s)+A_2(s)+A_3(s).
\end{align}
It follows from \ref{item:H-f}, Remark
\ref{rem:averaged-coefficients-NS}(ii), and \eqref{lemconabt} that
\begin{align}\label{93:2}
\E\left\|
A_1(s)
\right\|^2+
\E\left\|
A_3(s)
\right\|^2
\le
C_T(s-k\delta).
\end{align}
By \eqref{wf}, we have
\begin{align}\label{93:3}
\left\|
\int_{k\delta}^{(k+1)\delta}
\left[
A_2(s)
\right]ds
\right\|
\le
\delta\,
\omega^f\left(\frac{\delta}{\var}\right)
\left(
1+\norm{u_\var(k\delta)}
+
\left(\E\norm{u_\var(k\delta)}^2\right)^{1/2}
\right).
\end{align}
By \eqref{fcon}, Remark \ref{rem:averaged-coefficients-NS}(ii),
and Lemma \ref{lemone},
\begin{align}
\label{eq:thmT-drift-remainder-NS}
&\E\sup_{0\le t\le T}
\left\|
\int_{\lfloor t/\delta\rfloor\delta}^{t}
\left[
f\left(\frac{s}{\var},u_\var(s),\Law{u_\var(s)}\right)
-\bar f\left(u_\var(s),\Law{u_\var(s)}\right)
\right]ds
\right\|^2\notag\\
&\qquad\le
C\delta^2\left(
1+\E\sup_{0\le s\le T}\norm{u_\var(s)}^2
\right)
\le C_T\delta^2.
\end{align}
Combining \eqref{93:1}, \eqref{93:2}, \eqref{93:3}, and
\eqref{eq:thmT-drift-remainder-NS}, we obtain
\begin{equation}
\label{eq:thmT-drift-residual-NS}
\begin{aligned}
&\E\sup_{0\le t\le T}
\left\|
\int_0^t
\left[
f\left(
\frac{s}{\var},u_\var(s),\Law{u_\var(s)}
\right)
-
\bar f\left(
u_\var(s),\Law{u_\var(s)}
\right)
\right]ds
\right\|^2\le
C_T
\left[
\delta+
\left(
\omega^f\left(\frac{\delta}{\var}\right)
\right)^2
\right].
\end{aligned}
\end{equation}
Similarly, for \(s\in[k\delta,(k+1)\delta)\), we write
\[
\begin{aligned}
g\left(
\frac{s}{\var},u_\var(s),\Law{u_\var(s)}
\right)
-
\bar g\left(
u_\var(s),\Law{u_\var(s)}
\right)
&=
g\left(
\frac{s}{\var},u_\var(s),\Law{u_\var(s)}
\right)
-
g\left(
\frac{s}{\var},u_\var(k\delta),\Law{u_\var(k\delta)}
\right)
\\
&\quad+
g\left(
\frac{s}{\var},u_\var(k\delta),\Law{u_\var(k\delta)}
\right)
-
\bar g\left(
u_\var(k\delta),\Law{u_\var(k\delta)}
\right)
\\
&\quad+
\bar g\left(
u_\var(k\delta),\Law{u_\var(k\delta)}
\right)
-
\bar g\left(
u_\var(s),\Law{u_\var(s)}
\right).
\end{aligned}
\]
Using \ref{item:H-g}(i), Remark
\ref{rem:averaged-coefficients-NS}(ii), \eqref{lemconabt}, and
\eqref{wg} on $[k\delta,(k+1)\delta]$, and using \eqref{gcon}
and Lemma \ref{lemone} on
$[\lfloor T/\delta\rfloor\delta,T]$, we get
\[
\begin{aligned}
&\E\int_0^T
\left\|
g\left(
\frac{s}{\var},u_\var(s),\Law{u_\var(s)}
\right)
-
\bar g\left(
u_\var(s),\Law{u_\var(s)}
\right)
\right\|_{L_2(U,H)}^2\,ds
\le
C_T
\left[
\delta+
\omega^g\left(\frac{\delta}{\var}\right)
\right].
\end{aligned}
\]
The Burkholder--Davis--Gundy inequality then gives
\begin{equation}
\label{eq:thmT-diffusion-residual-NS}
\begin{aligned}
&\E\sup_{0\le t\le T}
\left\|
\int_0^t
\left[
g\left(
\frac{s}{\var},u_\var(s),\Law{u_\var(s)}
\right)
-
\bar g\left(
u_\var(s),\Law{u_\var(s)}
\right)
\right]dW_s
\right\|^2
\le
C_T
\left[
\delta+
\omega^g\left(\frac{\delta}{\var}\right)
\right].
\end{aligned}
\end{equation}
Choosing \(\delta=\sqrt{\var}\) in
\eqref{eq:thmT-drift-residual-NS} and
\eqref{eq:thmT-diffusion-residual-NS}, we conclude that
\begin{equation}
\label{eq:thmT-residual-limit-NS}
\begin{aligned}
\lim_{\var\to0}
\E\sup_{0\le t\le T}
\Bigg(
&
\left\|
\int_0^t
\left[
f\left(
\frac{s}{\var},u_\var(s),\Law{u_\var(s)}
\right)
-
\bar f\left(
u_\var(s),\Law{u_\var(s)}
\right)
\right]ds
\right\|^2
\\
&+
\left\|
\int_0^t
\left[
g\left(
\frac{s}{\var},u_\var(s),\Law{u_\var(s)}
\right)
-
\bar g\left(
u_\var(s),\Law{u_\var(s)}
\right)
\right]dW_s
\right\|^2
\Bigg)
=0.
\end{aligned}
\end{equation}

Let $\var_n\to0$. By \eqref{eq:thmT-relatively-compact-NS},
the path laws have a convergent subsequence.
Fix any such subsequence, still denoted by $n$.
There exists a $C([0,T];H)$-valued random variable $u^1$ such that
\begin{equation}
\label{93:11}
\Law{u_{\var_n}}
\Rightarrow
\Law{u^1}
\quad\text{in }\cal P(C([0,T];H)).
\end{equation}
Let $U_0$ be a separable Hilbert space such that
$U\hookrightarrow U_0$ is Hilbert--Schmidt.
For each $n$, write $W$ for the Wiener process driving $u_{\var_n}$.
These Wiener processes have the same law on $C([0,T];U_0)$.
Together with \eqref{eq:thmT-path-tightness-NS}, this implies that
\[\{\Law{(u_{\var_n},W)}\}_{n\ge1} \text{is tight on}
C([0,T];H)\times C([0,T];U_0).\]
Hence, passing to a further subsequence, we obtain
\begin{equation}
\label{eq:thmT-joint-law-limit-NS}
\Law{(u_{\var_n},W)}
\Rightarrow
\Law{(u^1,W^*)}.
\end{equation}
Since the evaluation map at $t=0$ is continuous,
$\Law{u^1(0)}=\Law{u_0}$.
By Lemma \ref{lemone},
\[
\sup_{n\ge1}
\E\sup_{0\le t\le T}\norm{u_{\var_n}(t)}^4<\infty,
\]
and hence
$\left\{
\sup_{0\le t\le T}\norm{u_{\var_n}(t)}^2
:n\ge1
\right\}$
is uniformly integrable. Since
$x\mapsto\sup_{0\le t\le T}\norm{x(t)}^2$
is continuous on \(C([0,T];H)\), \eqref{93:11} and uniform
integrability imply
\[
\E\sup_{0\le t\le T}\norm{u_{\var_n}(t)}^2
\longrightarrow
\E\sup_{0\le t\le T}\norm{u^1(t)}^2.
\]
Therefore, by the characterization of convergence in \(W_2\),
\[
W_2\left(
\Law{u_{\var_n}},
\Law{u^1}
\right)
\longrightarrow0
\quad\text{in }\cal P_2(C([0,T];H)).
\]
Since \(x\mapsto x(t)\) is \(1\)-Lipschitz from
\(C([0,T];H)\) to \(H\), for every \(t\in[0,T]\),
\[
W_2\left(
\Law{u_{\var_n}(t)},
\Law{u^1(t)}
\right)
\le
W_2\left(
\Law{u_{\var_n}},
\Law{u^1}
\right).
\]
Thus
\begin{equation}
\label{eq:thmT-limit-law-flow-NS}
\sup_{0\le t\le T}
W_2\left(
\Law{u_{\var_n}(t)},
\Law{u^1(t)}
\right)
\longrightarrow0.
\end{equation}

By \eqref{eq:thmT-joint-law-limit-NS}, weak lower semicontinuity, and
\eqref{eq:SPDEthree},
\begin{align}
\label{eq:thmT-limit-regularity-NS}
&\E\sup_{0\le t\le T}\norm{u^1(t)}^2
+\E\int_0^T\norm{u^1(t)}_V^2\,dt\notag\\
&\quad\le
\liminf_{n\to\infty}
\left[
\E\sup_{0\le t\le T}\norm{u_{\var_n}(t)}^2
+\E\int_0^T\norm{u_{\var_n}(t)}_V^2\,dt
\right]
\le C_T\left(1+\E\norm{u_0}^2\right).
\end{align}
Let $\{e_j\}_{j\ge1}$ be an orthonormal basis of $H$ consisting
of Stokes eigenfunctions. For $j\ge1$, set
\[
\begin{aligned}
M_{\var_n,j}(t)
&:=
\inpro{u_{\var_n}(t)-u_{\var_n}(0),e_j}
+\nu\int_0^t\inpro{u_{\var_n}(s),Ae_j}\,ds\\
&\quad+
\int_0^t b(u_{\var_n}(s),u_{\var_n}(s),e_j)\,ds
-\int_0^t
\inpro{
f\left(\frac{s}{\var_n},u_{\var_n}(s),\Law{u_{\var_n}(s)}\right),
e_j}\,ds .
\end{aligned}
\]
By \eqref{eq:SPDEone}, $M_{\var_n,j}$ is a continuous
square-integrable martingale satisfying
\[
\begin{aligned}
\left\langle M_{\var_n,i},M_{\var_n,j}\right\rangle_t
&=
\int_0^t
\left\langle
g\left(\frac{s}{\var_n},u_{\var_n}(s),\Law{u_{\var_n}(s)}\right)^*e_i,
g\left(\frac{s}{\var_n},u_{\var_n}(s),\Law{u_{\var_n}(s)}\right)^*e_j
\right\rangle_U\,ds,\\
\left\langle M_{\var_n,j},\inpro{W,h}_U\right\rangle_t
&=
\int_0^t
\left\langle
g\left(\frac{s}{\var_n},u_{\var_n}(s),\Law{u_{\var_n}(s)}\right)^*e_j,
h
\right\rangle_U\,ds,
\qquad h\in U.
\end{aligned}
\]
By \eqref{bskew},
\begin{equation}
\label{eq:thmT-nonlinear-continuity-NS}
|b(u,u,e_j)-b(v,v,e_j)|
\le
C_j\left(\norm{u}+\norm{v}\right)\norm{u-v},
\qquad u,v\in V.
\end{equation}
Hence \eqref{eq:thmT-joint-law-limit-NS} and
\eqref{eq:thmT-nonlinear-continuity-NS} give the convergence of the
nonlinear integrals. By \eqref{eq:thmT-limit-law-flow-NS} and Remark
\ref{rem:averaged-coefficients-NS}(ii), the integrals containing
$\bar f$ and $\bar g$ converge to those containing
$u^1$ and $\Law{u^1}$. Moreover,
\eqref{eq:thmT-residual-limit-NS} gives
\[
\E\sup_{0\le t\le T}
\left|
\int_0^t
\inpro{
f\left(\frac{s}{\var_n},u_{\var_n}(s),\Law{u_{\var_n}(s)}\right)
-\bar f\left(u_{\var_n}(s),\Law{u_{\var_n}(s)}\right),
e_j
}\,ds
\right|^2
\longrightarrow0.
\]
By It\^o's isometry and \eqref{eq:thmT-diffusion-residual-NS},
\[
\E\int_0^T
\norm{
g\left(\frac{s}{\var_n},u_{\var_n}(s),\Law{u_{\var_n}(s)}\right)
-\bar g\left(u_{\var_n}(s),\Law{u_{\var_n}(s)}\right)
}_{L_2(U,H)}^2\,ds
\longrightarrow0,
\]
and therefore the corresponding errors in the quadratic and cross
variations tend to zero in $L^1$ by the Cauchy--Schwarz inequality
and \eqref{gcon}.

By the Burkholder--Davis--Gundy inequality, \eqref{gcon}, and
Lemma \ref{lemone},
\[
\sup_{n\ge1}
\E\sup_{0\le t\le T}|M_{\var_n,j}(t)|^4
\le C_{T,j}\left(1+\E\norm{u_0}^4\right),
\qquad
\E\sup_{0\le t\le T}|\inpro{W_t,h}_U|^4
\le C_T\norm{h}_U^4.
\]
Thus the martingales and their products occurring in the identities
above are uniformly integrable.

Let $h\in U$ be such that
$v\mapsto\inpro{v,h}_U$ extends continuously to $U_0$.
Testing
\[
M_{\var_n,j},\qquad
M_{\var_n,i}M_{\var_n,j}
-\left\langle M_{\var_n,i},M_{\var_n,j}\right\rangle,\qquad
M_{\var_n,j}\inpro{W,h}_U
-\left\langle M_{\var_n,j},\inpro{W,h}_U\right\rangle
\]
against bounded continuous functions of finitely many past values
of $(u_{\var_n},W)$, we may pass to the limit by
\eqref{eq:thmT-joint-law-limit-NS} and the estimates above.
Applying the same passage to
$\inpro{W_t,h}_U$ and
$\inpro{W_t,h}_U^2-t\norm{h}_U^2$, and then using density and
L\'evy's characterization, we obtain that $W^*$ is a cylindrical
Wiener process with respect to the usual augmentation of the
filtration generated by $(u^1,W^*)$. Furthermore,
\[
\begin{aligned}
M_j^1(t)
&:=
\inpro{u^1(t)-u^1(0),e_j}
+\nu\int_0^t\inpro{u^1(s),Ae_j}\,ds\\
&\quad+
\int_0^t b(u^1(s),u^1(s),e_j)\,ds
-\int_0^t
\inpro{
\bar f\left(u^1(s),\Law{u^1(s)}\right),e_j
}\,ds
\end{aligned}
\]
is a continuous square-integrable martingale satisfying
\begin{align}
\label{eq:thmT-limit-covariations-NS}
\left\langle M_i^1,M_j^1\right\rangle_t
&=
\int_0^t
\left\langle
\bar g\left(u^1(s),\Law{u^1(s)}\right)^*e_i,
\bar g\left(u^1(s),\Law{u^1(s)}\right)^*e_j
\right\rangle_U\,ds,\notag\\
\left\langle M_j^1,\inpro{W^*,h}_U\right\rangle_t
&=
\int_0^t
\left\langle
\bar g\left(u^1(s),\Law{u^1(s)}\right)^*e_j,h
\right\rangle_U\,ds.
\end{align}
By density, the second identity holds for every $h\in U$.
It follows from \eqref{eq:thmT-limit-covariations-NS} that
\[
M_j^1(t)
=
\int_0^t
\inpro{
\bar g\left(u^1(s),\Law{u^1(s)}\right)dW_s^*,e_j},
\qquad
0\le t\le T,\quad \P\text{-a.s.}
\]
By \eqref{eq:thmT-limit-regularity-NS} and the density of
$\mathrm{span}\{e_j:j\ge1\}$ in $V$, the variational identity holds
for every $\phi\in V$. Hence $u^1$ is a variational martingale
solution of \eqref{eq:SPDEtwo} with initial law $\Law{u_0}$.
Since the convergent subsequence was arbitrary, every limit point
as $\var\to0$ has this property. Together with
\eqref{eq:thmT-relatively-compact-NS}, the proof is complete.
\end{proof}

\begin{rem}
\label{rem:shift-uniform-first}
(i)
For every $T>0$, the family of path laws in Theorem \ref{thmT}
remains relatively compact in $\cal P(C([0,T];H))$ uniformly 
for initial data satisfying a common fourth-moment bound in \(V\).

(ii)
The conclusion of Theorem \ref{thmT} holds on $[s,s+T]$
for every $s\in\R$ and $T>0$.
The path space is $C([s,s+T];H)$, and the initial law is
$\Law{\xi}$, where
$\xi\in L^4(\Omega,\cal F_s,\P;V)$.

(iii)
Assume that \ref{item:H-d} also holds.
For any $\tilde F\in\cal H(F_0)$, the corresponding coefficients
$\tilde f$ and $\tilde g$ satisfy
\ref{item:H-f}, \ref{item:H-g}, \ref{item:H-a}, and
\ref{item:H-d} with the same constants and the same averaged
coefficients $\bar f$ and $\bar g$.
Indeed, these properties hold for every translate
\(\sigma_\tau F_0\), since the bounds in
\ref{item:H-f}, \ref{item:H-g}, and \ref{item:H-d} are uniform in
time, while the supremum over \(a\in\R\) makes
\ref{item:H-a} translation invariant. For
\(\sigma_{\tau_n}F_0\to\tilde F\) in \(d\), passing to the limit on
bounded time intervals preserves
\ref{item:H-f}, \ref{item:H-g}(i), \ref{item:H-a}, and
\ref{item:H-d}; the \(V\)-valued bounds in
\ref{item:H-g}(ii) follow by weak lower semicontinuity in
\(L_2(U,V)\). Thus every element of the hull has the stated uniform
properties.
\end{rem}

\section{The Weak Second Bogolyubov Theorem}

In this section, we prove Theorem \ref{thmR} by constructing bounded complete variational solution laws through pullback limits and using uniform moment estimates, finite-window averaging, and tightness to identify their limit points.

Here we formulate complete solutions at the level of path laws
without requiring uniqueness.

\begin{de}
\label{def:complete-variational-solution-law-NS}
A probability law
$\Law{u_\var(\cdot)}
\in
\cal P\bigl(C_{\mathrm{loc}}(\R;H)\bigr)$
is called a \emph{complete variational solution law} of \eqref{eq:SPDEone} if, for every finite interval \([s,t]\subset\R\), its restriction \(\Law{u_\var|_{[s,t]}}\) is a variational solution law of \eqref{eq:SPDEone} on \([s,t]\).
It is called \emph{bounded} if
\[
\sup_{t\in\R}\E\norm{u_\var(t)}^2<\infty.
\]
The same definition applies to \eqref{eq:SPDEtwo}.
\end{de}

\subsection{Uniform pullback estimates and tightness}

\begin{lem}
\label{lem:energy-dissipativity-NS}
Assume that \ref{item:H-f}, \ref{item:H-g}(i), and \ref{item:H-d} hold. Suppose further that $2\nu\lambda_1>c_1+c_2$. 
Let $u_\var(t)$ be any variational martingale solution of
\eqref{eq:SPDEone} with initial value
$u_0\in L^2(\Omega,\cal F_s,\P;H)$.
Then, for all $t\ge s$,
\begin{align}
\label{eq:energy-dissipativity-NS}
\E\norm{u_\var(t)}^2
\le
e^{-\rho(t-s)}\E\norm{u_0}^2
+
\frac{c_0}{\rho},
\end{align}
where $\rho:=2\nu\lambda_1-c_1-c_2>0$.

If, in addition, \ref{item:H-a} holds, the same estimate holds
for any variational martingale solution of the averaged equation
\eqref{eq:SPDEtwo}.
\end{lem}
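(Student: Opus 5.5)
We apply the variational It\^o formula to $\norm{u_\var(t)}^2$ on $[s,t]$, as in the proof of Lemma \ref{lemone} with $p=1$. The identity \eqref{BzeroH} removes the nonlinear term, and $\inpro{Au,u}=\norm{u}_V^2$. This gives, almost surely,
\[
\norm{u_\var(t)}^2+2\nu\int_s^t\norm{u_\var(r)}_V^2\,dr
=\norm{u_0}^2+\int_s^t\Big[2\inpro{f(\tfrac r\var,u_\var(r),\Law{u_\var(r)}),u_\var(r)}+\norm{g(\tfrac r\var,u_\var(r),\Law{u_\var(r)})}_{L_2(U,H)}^2\Big]dr+M_t,
\]
with $M_t=2\int_s^t\inpro{g\,dW_r,u_\var(r)}$.

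By Lemma \ref{lemone} with $p=1$ and \eqref{gcon}, $\E\int_s^t\norm{g}_{L_2(U,H)}^2\norm{u_\var}^2\,dr<\infty$ on bounded intervals. Indeed, Lemma \ref{lemone} gives $\E\sup\norm{u_\var}^2<\infty$, and finiteness of the fourth moment can be avoided by localizing first. Concretely, stop at $\tau_N$, take expectations so the martingale term drops, and let $N\to\infty$ using dominated convergence, justified by $\E\sup_{[s,t]}\norm{u_\var}^2<\infty$ together with \eqref{gcon}. This yields the integral identity for $\E\norm{u_\var(t)}^2$, with the martingale term gone.

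Next we use \ref{item:H-d} with $\mu=\Law{u_\var(r)}$, so that $\mu(\norm{\cdot}^2)=\E\norm{u_\var(r)}^2$. Combined with the Poincar\'e inequality $2\nu\norm{u}_V^2\ge 2\nu\lambda_1\norm{u}^2$, this gives for $y(t):=\E\norm{u_\var(t)}^2$
\[
y(t)\le y(s)+\int_s^t\big[c_0-(2\nu\lambda_1-c_1-c_2)\,y(r)\big]dr .
\]
If $c_1<0$, the term $c_1\E\norm{u}^2$ is kept with its sign, so the same inequality holds. Since $y$ is continuous (the path is in $C([s,t];H)$ with uniformly integrable squared sup), the function $y$ is absolutely continuous. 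More precisely, the expected identity shows that $y(t)-y(s)$ is an integral of an $L^1$ function, and we have $y'\le c_0-\rho y$ a.e.

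The comparison argument then gives $\frac{d}{dt}(e^{\rho t}y(t))\le c_0e^{\rho t}$. Integrating from $s$ to $t$ yields $y(t)\le e^{-\rho(t-s)}y(s)+\frac{c_0}{\rho}(1-e^{-\rho(t-s)})$, which implies \eqref{eq:energy-dissipativity-NS}. For the averaged equation, Remark \ref{rem:bar} shows that $\bar f,\bar g$ satisfy the same dissipativity, and Remark \ref{rem:averaged-coefficients-NS}(ii) gives the growth bounds, so the argument is identical.

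The only delicate point is justifying that the stochastic integral has zero expectation and that $y$ satisfies a differential rather than merely integral inequality. To avoid the latter, we would apply It\^o's formula directly to $e^{\rho t}\norm{u_\var(t)}^2$. Its drift is $e^{\rho t}[\rho\norm{u}^2-2\nu\norm{u}_V^2+2\inpro{f,u}+\norm{g}^2]$, and after taking expectations and using \ref{item:H-d} with Poincar\'e, this drift is bounded by $c_0e^{\rho r}$. The estimate then follows immediately with no Gronwall step.
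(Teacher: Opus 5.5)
Your proof is correct and follows the same route as the paper: It\^o's formula for $\norm{u_\var(t)}^2$, cancellation of the nonlinearity by \eqref{BzeroH}, \ref{item:H-d} with $\mu=\Law{u_\var(r)}$, the Poincar\'e inequality, and a Gronwall/comparison step (your $e^{\rho t}$-weighted It\^o variant is an equivalent packaging). Your localization at $\tau_N$ supplies a justification the paper leaves implicit when it writes $\frac{d}{dt}\E\norm{u_\var(t)}^2$ directly; it also rightly replaces your initial claim that Lemma \ref{lemone} with $p=1$ makes $\E\int_s^t\norm{g}_{L_2(U,H)}^2\norm{u_\var}^2\,dr$ finite, which would need fourth moments. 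Your remark that a differential inequality, rather than an integral inequality from the fixed time $s$, is needed is likewise left implicit in the paper.
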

\begin{proof}
Applying It\^o's formula to \(\norm{u_\var(t)}^2\), using
\eqref{BzeroH}, we obtain
\[
\begin{aligned}
\frac{d}{dt}\E\norm{u_\var(t)}^2
&+
2\nu\E\norm{u_\var(t)}_V^2  \notag\\
&=
\E\left[
2\inpro{
f\left(\frac{t}{\var},u_\var(t),\Law{u_\var(t)}\right),
u_\var(t)
}
+
\norm{
g\left(\frac{t}{\var},u_\var(t),\Law{u_\var(t)}\right)
}_{L_2(U,H)}^2
\right].
\end{aligned}
\]
By \ref{item:H-d},
\begin{align}\label{37:2}
\frac{d}{dt}\E\norm{u_\var(t)}^2
+
2\nu\E\norm{u_\var(t)}_V^2
\le
c_0+(c_1+c_2)\E\norm{u_\var(t)}^2.
\end{align}
Using \eqref{37:2} and the Poincar\'e inequality, one has
\[
\begin{aligned}
\frac{d}{dt}\E\norm{u_\var(t)}^2
\le
-\rho\E\norm{u_\var(t)}^2+c_0,
\end{aligned}
\]
where
$\rho=2\nu\lambda_1-c_1-c_2>0$.
Gronwall's inequality gives 
\[
\E\norm{u_\var(t)}^2
\le
e^{-\rho(t-s)}\E\norm{u_0}^2
+
\frac{c_0}{\rho}
\left(1-e^{-\rho(t-s)}\right)
\le
e^{-\rho(t-s)}\E\norm{u_0}^2
+
\frac{c_0}{\rho}.
\]
By Remark \ref{rem:bar}, the argument yielding
\eqref{eq:energy-dissipativity-NS} applies to $\bar u$.
\end{proof}

\begin{lem}
\label{lem:finite-window-law-stability-NS}
Assume that \ref{item:H-f} and \ref{item:H-g} hold. Fix $0<\var\le1$ and
$a<b$. Let $\{\mu_k\}_{k\ge1}\subset\cal P_2(V)$, 
$\mu\in\cal P_2(V)$ satisfy
$W_2(\mu_k,\mu)\longrightarrow0$,
and for some $p>1$,
\begin{align}
\label{eq:initial-law-uniform-moments-NS}
\sup_{k\ge1}
\int_H\left(\norm{x}_V^2+\norm{x}^{2p}\right)\,\mu_k(dx)
<\infty.
\end{align}
Let $U_k$ be any variational martingale solutions of
\eqref{eq:SPDEone} on $[a,b]$ with initial laws $\mu_k$.
Then
\begin{align}
\label{eq:finite-window-law-stability-NS}
\{\Law{U_k}:k\ge1\}
\quad\text{is relatively compact in }
\cal P(C([a,b];H)).
\end{align}
Every limit point is the path law of a variational martingale
solution of \eqref{eq:SPDEone} on $[a,b]$ with initial law $\mu$.

If, in addition, \ref{item:H-a} holds, the same conclusion holds
for the averaged equation \eqref{eq:SPDEtwo}.
\end{lem}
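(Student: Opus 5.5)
The plan is to rerun the compactness and identification argument from the proof of Theorem \ref{thmT}, now with $\var$ fixed and the initial law varying along the sequence. The uniform hypothesis \eqref{eq:initial-law-uniform-moments-NS} replaces the fixed initial datum $u_0$.

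\textbf{Step 1: uniform bounds and tightness.} For each $k$, Lemma \ref{lemone} with exponent $p$ gives $\sup_k\E\sup_{a\le t\le b}\norm{U_k(t)}^{2p}<\infty$ and $\sup_k\E\int_a^b\norm{U_k}_V^2\,dt<\infty$. The $V$-estimate of Lemma \ref{lemtwo} with exponent $2$ needs a fourth $V$-moment, which is not assumed, so I will only use it in its $p=1$ form, $\sup_k\E\sup_t\norm{U_k(t)}_V^2<\infty$. I then split
\[
U_k(t)=\Bigl(U_k(t)-\int_a^t g\,dW\Bigr)+\int_a^t g\,dW .
\]
For the stochastic part, \ref{item:H-g}(ii) and BDG give $\E\norm{\int_s^t g\,dW}_V^{2}\le C(t-s)$. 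For the drift part, $\norm{B(u,u)}_{V^*}\le C\norm{u}_V^2$ together with the $\sup_t\norm{\cdot}_V^2$ bound gives $\norm{\cdot}_{W^{1,1}(a,b;V^*)}$ bounded in $L^1(\Omega)$. Combining this with boundedness in $L^\infty(a,b;V)$ and the Aubin--Lions--Simon compactness of $L^\infty(V)\cap W^{1,1}(V^*)$ in $C([a,b];H)$ gives tightness of the drift part. For the stochastic part I will use an Aldous-type criterion in $H$ together with the bounded $V$-moments and the compact embedding $V\hookrightarrow H$. If that route is awkward, I will instead use Lemma \ref{Law-con}-type increments $\E\norm{U_k(t)-U_k(s)}^2\le C|t-s|$ plus the Aldous condition, which only uses second $V$-moments via \eqref{33:3}. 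Prokhorov's theorem then yields relative compactness.

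\textbf{Step 2: identification of limits.} Take a subsequence with $\Law{(U_k,W)}\Rightarrow\Law{(U,W^*)}$ on $C([a,b];H)\times C([a,b];U_0)$. Because $p>1$, the family $\sup_t\norm{U_k(t)}^2$ is uniformly integrable. As in the proof of Theorem \ref{thmT}, this upgrades weak convergence to $\sup_t W_2(\Law{U_k(t)},\Law{U(t)})\to0$. In particular $\Law{U(a)}=\lim\mu_k=\mu$, using the hypothesis $W_2(\mu_k,\mu)\to0$. Since $f(t/\var,\cdot,\cdot)$ and $g(t/\var,\cdot,\cdot)$ are Lipschitz in $(x,\mu)$ uniformly in $t$, no averaging is needed. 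The drift and quadratic-variation integrands
\[
\inpro{f(\tfrac s\var,U_k(s),\Law{U_k(s)}),e_j}
\quad\text{and}\quad
\inpro{g^*e_i,g^*e_j}_U
\]
converge by the Lipschitz bounds, the $W_2$ convergence of marginals, and uniform integrability. The nonlinear term converges by \eqref{eq:thmT-nonlinear-continuity-NS}. The martingale problem for $M_j$, $M_iM_j-\langle M_i,M_j\rangle$ and the cross variations with $\inpro{W,h}_U$ then passes to the limit exactly as in Theorem \ref{thmT}. Lévy's characterization and the representation of $M_j$ as a stochastic integral show that $U$ is a variational martingale solution on $[a,b]$ with initial law $\mu$. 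The required regularity $U\in L^2(\Omega;L^2(a,b;V))$ follows by lower semicontinuity.

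\textbf{Main obstacle and the averaged case.} I expect the main obstacle to be the tightness step under only second $V$-moments of the initial laws. The proof of Theorem \ref{thmT} relied on fourth moments in \eqref{eq:thmT-uniform-strong-estimates-NS}, so the Hölder exponents must be traded for $L^1$-in-$\omega$ bounds, and tightness must be argued via Markov's inequality on each piece. A secondary point is uniform integrability, which is what the extra $2p$-moment is for. For the averaged equation, Remark \ref{rem:averaged-coefficients-NS}(ii) gives $\bar f,\bar g$ the same hypotheses, so the same argument applies verbatim.
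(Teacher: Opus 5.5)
Your identification step (joint convergence with $W$, uniform integrability of $\sup_t\norm{U_k(t)}^2$ from $p>1$, the upgrade to $\sup_t W_2(\Law{U_k(t)},\Law{U(t)})\to0$, $\Law{U(a)}=\mu$, and the martingale problem with L\'evy's characterization) is the paper's argument and is fine.

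The gap is in the tightness of the drift part $D_k:=U_k-\int_a^\cdot g\,dW$. The space $L^\infty(a,b;V)\cap W^{1,1}(a,b;V^*)$ is \emph{not} compactly embedded in $C([a,b];H)$. Simon's theorem yields compactness in $C([a,b];H)$ only when the time derivatives are bounded in $L^r(a,b;V^*)$ with $r>1$. For $r=1$ it gives compactness only in $L^q(a,b;H)$, $q<\infty$, and the $C$-statement is false. Take a monotone smooth step $\phi$ and $t_0\in(a,b)$, and set $x_n(t)=\phi(n(t-t_0))e_1$. Then $x_n$ is bounded in $L^\infty(a,b;V)$ and $\norm{\partial_tx_n}_{L^1(a,b;V^*)}\le\norm{e_1}_{V^*}$. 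But $x_n$ converges pointwise to a discontinuous function, so no subsequence converges uniformly. Hence an $L^1(\Omega)$ bound on $\norm{D_k}_{W^{1,1}(a,b;V^*)}$ does not give tightness in $C([a,b];H)$.

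Your fallback does not close the gap either. The proof of Lemma \ref{Law-con} uses $\E\norm{u(r)}_V^4$ in \eqref{33:4}, which \eqref{eq:initial-law-uniform-moments-NS} does not provide. Moreover, a fixed-time bound $\E\norm{U_k(t)-U_k(s)}^2\le C|t-s|$ controls neither the path modulus (Kolmogorov needs an exponent $>1$) nor increments at stopping times.

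The repair is cheap, and the ``obstacle'' you anticipate does not arise. Your own bound $\E\sup_t\norm{U_k(t)}_V^2<\infty$, together with $\norm{AU}_{V^*}=\norm{U}_V$, $\norm{B(U,U)}_{V^*}\le C\norm{U}_V^2$ and \eqref{fcon}, already bounds $\partial_tD_k$ in $L^\infty(a,b;V^*)$ with integrable norm. So the $D_k$ are Lipschitz in $V^*$ with $L^1(\Omega)$-bounded constants. The interpolation $\norm{x}^2\le\norm{x}_V\norm{x}_{V^*}$, combined with the $L^\infty(a,b;V)$ bound on $D_k$, then makes them equi-$\frac12$-H\"older in $H$.

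The paper is more direct still. For the drift part it uses the second half of Lemma \ref{lemtwo} with $p=1$, namely $\sup_k\E\int_a^b\norm{AU_k}^2dt<\infty$, together with $\norm{B(u,u)}\le C\norm{u}_V\norm{Au}$. This gives an $H$-valued $\frac12$-H\"older bound for $D_k$ with random constant $(1+\sup_t\norm{U_k(t)}_V)(\int_a^b\norm{AU_k}^2dt)^{1/2}$, which is integrable by Cauchy--Schwarz. For the stochastic integral it uses the $2p$-moments in $H$, not $V$: BDG gives $\E\norm{\int_s^tg\,dW}^{2p}\le C(t-s)^p$, and Kolmogorov gives a $C^\gamma([a,b];H)$ bound for $\gamma<(p-1)/(2p)$. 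Arzel\`a--Ascoli, with $\sup_t\norm{U_k(t)}_V$ bounded in probability and the compact embedding $V\hookrightarrow H$, then yields tightness without losing any H\"older exponent.

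Your Aldous argument for the stochastic integral alone is valid. It needs only $\E\int_\tau^{\tau+\delta}\norm{g}_{L_2(U,H)}^2ds\le C\delta$ plus pointwise tightness from the $V$-bound, and it does not use $p>1$. The extra moment is still needed for the uniform integrability in the identification step. So the only step you must change is the drift part.
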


\begin{proof}
We only prove the assertion for \eqref{eq:SPDEone}. Since
\(W_2(\mu_k,\mu)\to0\), we also have \(\mu_k\Rightarrow\mu\) in \(H\).
By the lower semicontinuity of
\[
x\mapsto \norm{x}^{2p}+\norm{x}_V^2,
\]
where \(\norm{x}_V=+\infty\) for \(x\notin V\),
\[
\int_H
\left(
\norm{x}^{2p}+\norm{x}_V^2
\right)\mu(dx)
\le
\liminf_{k\to\infty}
\int_H
\left(
\norm{x}^{2p}+\norm{x}_V^2
\right)\mu_k(dx)
<\infty.
\]
By Lemma \ref{lemone}, Lemma \ref{lemtwo} with $p=1$, and
\eqref{eq:initial-law-uniform-moments-NS},
\begin{equation}
\label{eq:finite-window-uniform-estimates-NS}
\sup_{k\ge1}
\left[
\E\sup_{a\le t\le b}\norm{U_k(t)}^{2p}
+
\E\sup_{a\le t\le b}\norm{U_k(t)}_V^2
+
\E\int_a^b\norm{AU_k(t)}^2\,dt
\right]
<\infty.
\end{equation}
For \(a\le s\le t\le b\), the Burkholder--Davis--Gundy inequality,
\ref{item:H-g}(i), and
\eqref{eq:finite-window-uniform-estimates-NS} yield
\[
\begin{aligned}
&\sup_{k\ge1}
\E
\norm{
\int_s^t
g\left(
\frac r\var,U_k(r),\Law{U_k(r)}
\right)dW_r
}^{2p}\\
&\qquad\le
C(t-s)^{p-1}
\sup_{k\ge1}
\int_s^t
\left(
1+\E\norm{U_k(r)}^{2p}
\right)dr
\le
C(t-s)^p.
\end{aligned}
\]
Hence, for every
\(0<\gamma<(p-1)/(2p)\), Kolmogorov's continuity criterion gives
\[
\sup_{k\ge1}
\E
\left\|
\int_a^\cdot
g\left(
\frac r\var,U_k(r),\Law{U_k(r)}
\right)dW_r
\right\|_{C^\gamma([a,b];H)}^{2p}
<\infty.
\]
On the other hand, using
\[
\norm{B(u,u)}
\le
C\norm{u}_V\norm{Au},
\qquad u\in D(A),
\]
the Cauchy--Schwarz inequality, and
\ref{item:H-f}, we have
\[
\begin{aligned}
&\norm{
U_k(t)-U_k(s)
-
\int_s^t
g\left(
\frac r\var,U_k(r),\Law{U_k(r)}
\right)dW_r
}\\
&\quad\le
C(t-s)^{1/2}
\left(
1+\sup_{a\le r\le b}\norm{U_k(r)}_V
\right)
\left(
\int_s^t\norm{AU_k(r)}^2\,dr
\right)^{1/2}\\
&\qquad+
C(t-s)
\left[
1+
\sup_{a\le r\le b}\norm{U_k(r)}
+
\sup_{a\le r\le b}
\left(\E\norm{U_k(r)}^2\right)^{1/2}
\right].
\end{aligned}
\]
Together with \eqref{eq:finite-window-uniform-estimates-NS}, the
compact embedding \(V\hookrightarrow H\), and the
Arzel\`a--Ascoli theorem, one has
\begin{equation}
\label{eq:finite-window-path-law-tightness-NS}
\left\{
\Law{U_k}:k\ge1
\right\}
\quad\text{is tight in }\cal P(C([a,b];H)).
\end{equation}
By \eqref{eq:finite-window-path-law-tightness-NS} and
Prokhorov's theorem, the path laws are relatively compact.
Fix any convergent subsequence, still denoted by $\Law{U_k}$.
There exists a $C([a,b];H)$-valued random variable $U_*$ such that
\begin{align}
\label{94:1}
\Law{U_k}
\Rightarrow
\Law{U_*}
\quad\text{in }\cal P(C([a,b];H)).
\end{align}
For each $k$, write $W$ for the driving Wiener process,
normalized by $W_a=0$.
Let $U_0$ be as in the proof of Theorem \ref{thmT}.
By the argument yielding \eqref{eq:thmT-joint-law-limit-NS},
after passing to a further subsequence, we obtain
\[
\Law{(U_k,W)}
\Rightarrow
\Law{(U_*,W^*)}
\quad\text{in }
\cal P\bigl(C([a,b];H)\times C([a,b];U_0)\bigr).
\]
By \eqref{eq:finite-window-uniform-estimates-NS} and \(p>1\),
\[
\sup_{k\ge1}
\E\left(
\sup_{a\le t\le b}\norm{U_k(t)}^2
\right)^p
=
\sup_{k\ge1}
\E\sup_{a\le t\le b}\norm{U_k(t)}^{2p}
<\infty.
\]
Hence
$\left\{
\sup_{a\le t\le b}\norm{U_k(t)}^2
\right\}_{k\ge1}$
is uniformly integrable. Together with
\eqref{94:1},
we obtain
\begin{equation}
\label{eq:finite-window-path-W2-convergence-NS}
W_2\left(
\Law{U_k},\Law{U_*}
\right)\longrightarrow0
\quad\text{in }\cal P_2(C([a,b];H)).
\end{equation}
Since the evaluation map \(x\mapsto x(t)\) is \(1\)-Lipschitz on
\(C([a,b];H)\), \eqref{eq:finite-window-path-W2-convergence-NS} gives
\begin{equation}
\label{eq:finite-window-marginal-W2-convergence-NS}
\sup_{a\le t\le b}
W_2\left(
\Law{U_k(t)},\Law{U_*(t)}
\right)
\longrightarrow0.
\end{equation}
Moreover, by the weak convergence of the path laws and
\(W_2(\mu_k,\mu)\to0\), we get
$\Law{U_*(a)}=\mu$.

Let \(e_j\in D(A)\) be an eigenfunction of the Stokes operator. For each
\(k\),
\[
\begin{aligned}
\mathcal M_{k,j}(t)
&:=
\inpro{U_k(t)-U_k(a),e_j}
+
\nu\int_a^t\inpro{U_k(r),e_j}_V\,dr
+
\int_a^t b(U_k(r),U_k(r),e_j)\,dr\\
&\quad-
\int_a^t
\inpro{
f\left(
\frac r\var,U_k(r),\Law{U_k(r)}
\right),
e_j
}\,dr
\end{aligned}
\]
is a continuous square-integrable martingale with
\[
\left\langle
\mathcal M_{k,i},\mathcal M_{k,j}
\right\rangle_t
=
\int_a^t
\left\langle
g\left(
\frac r\var,U_k(r),\Law{U_k(r)}
\right)^*e_i,
g\left(
\frac r\var,U_k(r),\Law{U_k(r)}
\right)^*e_j
\right\rangle_U\,dr.
\]
By the Burkholder--Davis--Gundy inequality, \eqref{gcon}, and
\eqref{eq:finite-window-uniform-estimates-NS},
\[
\begin{aligned}
\sup_{k\ge1}\E\sup_{a\le t\le b}
|\mathcal M_{k,j}(t)|^{2p}
&\le
C_{p,j}\sup_{k\ge1}\E\left[
\int_a^b
\norm{
g\left(\frac r\var,U_k(r),\Law{U_k(r)}\right)
}_{L_2(U,H)}^2\,dr
\right]^p
<\infty.
\end{aligned}
\]
The Wiener coordinates also have finite $2p$-moments.
Since $p>1$, the martingales, their products, and their quadratic
and cross variations are uniformly integrable.
By \eqref{eq:finite-window-path-W2-convergence-NS},
\eqref{eq:finite-window-marginal-W2-convergence-NS},
\eqref{eq:thmT-nonlinear-continuity-NS},
\ref{item:H-f}, and \ref{item:H-g}(i), the argument yielding
\eqref{eq:thmT-limit-covariations-NS} gives
\[
\begin{aligned}
\mathcal M_{*,j}(t)
&:=
\inpro{U_*(t)-U_*(a),e_j}
+\nu\int_a^t\inpro{U_*(r),e_j}_V\,dr\\
&\quad+
\int_a^t b(U_*(r),U_*(r),e_j)\,dr
-\int_a^t
\inpro{
f\left(\frac r\var,U_*(r),\Law{U_*(r)}\right),e_j
}\,dr
\end{aligned}
\]
as a continuous square-integrable martingale satisfying
\[
\begin{aligned}
\left\langle\mathcal M_{*,i},\mathcal M_{*,j}\right\rangle_t
&=
\int_a^t
\left\langle
g\left(\frac r\var,U_*(r),\Law{U_*(r)}\right)^*e_i,
g\left(\frac r\var,U_*(r),\Law{U_*(r)}\right)^*e_j
\right\rangle_U\,dr,\\
\left\langle\mathcal M_{*,j},\inpro{W^*,h}_U\right\rangle_t
&=
\int_a^t
\left\langle
g\left(\frac r\var,U_*(r),\Law{U_*(r)}\right)^*e_j,h
\right\rangle_U\,dr,
\qquad h\in U.
\end{aligned}
\]
Here $W^*$ is a cylindrical Wiener process with respect to the
usual augmentation of the filtration generated by $(U_*,W^*)$.
By the argument following \eqref{eq:thmT-limit-covariations-NS},
\[
\mathcal M_{*,j}(t)
=
\int_a^t
\inpro{
g\left(\frac r\var,U_*(r),\Law{U_*(r)}\right)dW_r^*,e_j
},
\qquad a\le t\le b,\quad\P\text{-a.s.}
\]
By \eqref{eq:finite-window-uniform-estimates-NS} and
weak lower semicontinuity,
\[
\E\sup_{a\le t\le b}\norm{U_*(t)}^2
+
\E\int_a^b\norm{U_*(t)}_V^2\,dt
<\infty.
\]
Together with $\Law{U_*(a)}=\mu$ and the density of the Stokes
eigenfunctions in $V$, this shows that $U_*$ is a variational
martingale solution of \eqref{eq:SPDEone} with initial law $\mu$.
Since the convergent subsequence was arbitrary, every limit point
has this property.

Under \ref{item:H-a}, Remark
\ref{rem:averaged-coefficients-NS}(ii) gives the same coefficient
bounds for $\bar f,\bar g$.
The argument yielding
\eqref{eq:finite-window-law-stability-NS} and identifying its
limit points therefore applies to \eqref{eq:SPDEtwo}.
\end{proof}

\begin{lem}
\label{lem:complete-law-NS}
Assume that \ref{item:H-f}, \ref{item:H-g}, \ref{item:H-a} and
\ref{item:H-d} hold with
\(2\nu\lambda_1>c_1+c_2\).
Then, for every \(0<\var\le1\), equation \eqref{eq:SPDEone} admits a
bounded complete variational solution law. Moreover, for some
\(p>1\), these solution laws can be chosen such that
\begin{equation}
\label{eq:complete-uniform-bound-NS}
\sup_{0<\var\le1}\sup_{t\in\R}
\E\left(
\norm{u_\var(t)}^2
+
\norm{u_\var(t)}_V^2
+
\norm{u_\var(t)}^{2p}
\right)
<\infty.
\end{equation}
The averaged equation \eqref{eq:SPDEtwo} admits a bounded complete
variational solution law satisfying the same estimate.
\end{lem}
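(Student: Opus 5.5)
The construction is by pullback limits. For each \(n\in\N\), I would start at time \(-n\) from the deterministic initial datum \(u(-n)=0\), that is, initial law \(\delta_0\). Remark \ref{rem:averaged-coefficients-NS}(iii) gives a variational martingale solution \(u_\var^n\) of \eqref{eq:SPDEone} on \([-n,\infty)\), and I would fix one. The goal is bounds on \(\sup_{t\ge -n}\) that are uniform in \(n\) and \(\var\) for three quantities: \(\E\norm{u^n_\var(t)}^2\), \(\E\norm{u^n_\var(t)}_V^2\) and \(\E\norm{u^n_\var(t)}^{2p}\) for some \(p>1\).

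The \(L^2\) bound is immediate from Lemma \ref{lem:energy-dissipativity-NS}, which gives \(\E\norm{u_\var^n(t)}^2\le c_0/\rho\). For the \(2p\)-moment I would redo the Itô computation \eqref{HitoNS} with \(p=1+\eta\), \(\eta>0\) small, and take expectations. Hypothesis \ref{item:H-d} controls the terms \(2p\norm{u}^{2p-2}\inpro{f,u}+p\norm{u}^{2p-2}\norm{g}^2\) by \(p\norm{u}^{2p-2}(c_0+c_1\norm{u}^2+c_2\E\norm{u}^2)\). The extra correction \(2p(p-1)\norm{u}^{2p-4}\norm{g^*u}^2\) is at most \(C\eta\norm{u}^{2p-2}(1+\norm{u}^2+\E\norm{u}^2)\) by \eqref{gcon}. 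Young's inequality then gives \(\norm{u}^{2p-2}\E\norm{u}^2\le \frac{p-1}{p}\norm{u}^{2p}+\frac1p(\E\norm{u}^2)^p\), and the last term is bounded by the \(L^2\) estimate. Combining this with \(2p\nu\norm{u}^{2p-2}\norm{u}_V^2\ge 2p\nu\lambda_1\norm{u}^{2p}\) yields
\[
\frac{d}{dt}\E\norm{u}^{2p}\le -p(\rho-C\eta)\E\norm{u}^{2p}+C.
\]
For \(\eta\) small this gives a uniform bound. The dissipation inequality also gives \(\int_t^{t+1}\E\norm{u}_V^2\,dr\le C\) and \(\int_t^{t+1}\E\norm{u}^{2p-2}\norm{u}_V^2\,dr\le C\) uniformly in \(t\).

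The \(V\)-bound is the main obstacle. Lemma \ref{lemtwo} is only a finite-horizon estimate, with a constant \(C_T\) that could grow in \(T\), so it cannot be applied on \([-n,t]\) directly. The plan is a uniform-Gronwall argument applied to the enstrophy. By \eqref{BzeroV}, Itô for \(\norm{u}_V^2\) (via Galerkin, as in the appendix) gives
\[
\frac{d}{dt}\E\norm{u}_V^2+2\nu\E\norm{Au}^2=2\E\inpro{f,Au}+\E\norm{g}^2_{L_2(U,V)}.
\]
By \eqref{fcon} and \ref{item:H-g}(ii), the right side is at most \(\nu\E\norm{Au}^2+C(1+\E\norm{u}_V^2+\E\norm{u}^2)\), so \(\frac{d}{dt}\E\norm{u}_V^2\le C\E\norm{u}_V^2+C\). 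Since \(\int_t^{t+1}\E\norm{u}_V^2\le C\), the uniform Gronwall lemma gives \(\E\norm{u(t+1)}_V^2\le C\) for \(t\ge -n\). The initial interval \([-n,-n+1]\) is covered by Lemma \ref{lemtwo}, since the initial datum is \(0\in V\). An alternative is to use \(-\nu\norm{Au}^2\le-\nu\lambda_1\norm{u}_V^2\) directly to get exponential decay. All constants depend only on the constants in the hypotheses, so they are uniform in \(\var\).

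Next I would pass to the limit in \(n\), using a diagonal argument over windows \([-m,m]\). For each \(m\), the time-\((-m)\) marginals \(\{\Law{u^n_\var(-m)}\}_{n\ge m}\) have bounded \(V\)-second moments and bounded \(2p\)-moments. Since \(V\hookrightarrow H\) is compact, they are tight, and the \(2p\)-moment bound makes them uniformly square integrable, so they are relatively compact in \((\cal P_2(H),W_2)\). Along a subsequence they converge in \(W_2\). Lemma \ref{lem:finite-window-law-stability-NS}, applied on \([-m,m]\) (the restriction of \(u_\var^n\) to this window is a solution started from its time-\(-m\) law), then gives a further subsequence whose path laws converge in \(\cal P(C([-m,m];H))\) to a variational solution law. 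Taking a diagonal subsequence over \(m\) yields path laws consistent under restriction, hence a law on \(C_{\mathrm{loc}}(\R;H)\); here tightness on every window implies tightness in the compact-open topology. Each restriction to \([s,t]\subset[-m,m]\) is a variational solution law, because restriction preserves the weak formulation and limit laws are identified window-by-window. The uniform moment bounds pass to the limit by lower semicontinuity under weak convergence, as in the proof of Lemma \ref{lem:finite-window-law-stability-NS}, which gives \eqref{eq:complete-uniform-bound-NS}.

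The averaged equation is handled identically. Remarks \ref{rem:averaged-coefficients-NS}(ii) and \ref{rem:bar} ensure that \(\bar f,\bar g\) satisfy the same hypotheses with the same constants, so the same \(p\) works for both equations.
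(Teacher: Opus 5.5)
Your proposal is correct and follows essentially the paper's proof: pullback solutions started from $\delta_0$ at time $-n$, uniform $L^2$, $V$ and $2p$-moment bounds, $W_2$-compactness of the marginals at time $-m$, Lemma \ref{lem:finite-window-law-stability-NS} on windows, and a diagonal consistency construction. Your uniform-Gronwall enstrophy bound is the same time-averaging device as the paper's choice of a good time $s\in[t-1,t]$ followed by Lemma \ref{lemtwo} on $[s,s+1]$, and your Young-plus-$L^2$ closure of the $2p$-moment is a harmless variant of the paper's H\"older step. The only caveat concerns your side remark: $-\nu\norm{Au}^2\le-\nu\lambda_1\norm{u}_V^2$ alone does not yield decay when the constant multiplying $\E\norm{u}_V^2$ (which involves $L_g'$) exceeds $\nu\lambda_1$, so that alternative would need the interpolation $\norm{u}_V^2\le\norm{u}\,\norm{Au}$; your main argument does not rely on it.
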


\begin{proof}
For every $0<\var\le1$ and $n\in\N$, let $u_\var^n$ be a
variational martingale solution of \eqref{eq:SPDEone} on
$[-n,\infty)$ with $u_\var^n(-n)=0$.
By Lemma \ref{lem:energy-dissipativity-NS},
\begin{equation}
\label{eq:pullback-H-bound-NS}
\sup_{0<\var\le1}\sup_{n\in\mathbb N}\sup_{t\ge-n}
\E\norm{u_\var^n(t)}^2
\le
\frac{c_0}{2\nu\lambda_1-c_1-c_2}.
\end{equation}
For \(t\ge-n+1\), using \eqref{37:2} and \eqref{eq:pullback-H-bound-NS}, we obtain
\begin{equation}
\label{eq:pullback-integral-V-bound-NS}
\sup_{0<\var\le1}\sup_{n\in\mathbb N}\sup_{t\ge-n+1}
\int_{t-1}^t\E\norm{u_\var^n(r)}_V^2\,dr
<\infty.
\end{equation}
Hence, for every \(t\ge-n+1\), there exists \(s\in[t-1,t]\) such that
\[
\E\norm{u_\var^n(s)}_V^2
\le
\int_{t-1}^t\E\norm{u_\var^n(r)}_V^2\,dr
\le C.
\]
Applying Lemma \ref{lemtwo} on \([s,s+1]\), we have
\begin{equation}
\label{eq:pullback-V-bound-NS}
\sup_{0<\var\le1}\sup_{n\in\mathbb N}\sup_{t\ge-n+1}
\E\norm{u_\var^n(t)}_V^2
<\infty.
\end{equation}
For $p>1$, applying It\^o's formula to
$\norm{u_\var^n(t)}^{2p}$ and using \eqref{BzeroH},
\eqref{Hfour}, and \eqref{gcon}, we obtain
\[
\begin{aligned}
\frac{d}{dt}\E\norm{u_\var^n(t)}^{2p}
&+
2p\nu
\E\left(
\norm{u_\var^n(t)}^{2p-2}
\norm{u_\var^n(t)}_V^2
\right)\\
&\le
pc_0\E\norm{u_\var^n(t)}^{2p-2}
+
pc_1\E\norm{u_\var^n(t)}^{2p}\\
&\quad+
pc_2
\E\norm{u_\var^n(t)}^2
\E\norm{u_\var^n(t)}^{2p-2}\\
&\quad+
2p(p-1)C
\left[
\E\norm{u_\var^n(t)}^{2p-2}
+
\E\norm{u_\var^n(t)}^{2p}\right.\\
&\hspace{3.5cm}\left.
+
\E\norm{u_\var^n(t)}^2
\E\norm{u_\var^n(t)}^{2p-2}
\right].
\end{aligned}
\]
By H\"older's inequality,
\[
\E\norm{u_\var^n(t)}^2
\E\norm{u_\var^n(t)}^{2p-2}
\le
\left(
\E\norm{u_\var^n(t)}^{2p}
\right)^{1/p}
\left(
\E\norm{u_\var^n(t)}^{2p}
\right)^{(p-1)/p}
=
\E\norm{u_\var^n(t)}^{2p}.
\]
Thus,
\[
\begin{aligned}
\frac{d}{dt}\E\norm{u_\var^n(t)}^{2p}
&+
2p\nu
\E\left(
\norm{u_\var^n(t)}^{2p-2}
\norm{u_\var^n(t)}_V^2
\right)\\
&\le
p\left[
c_1+c_2+4(p-1)C
\right]
\E\norm{u_\var^n(t)}^{2p}\\
&\quad+
p\left[
c_0+2(p-1)C
\right]
\E\norm{u_\var^n(t)}^{2p-2}.
\end{aligned}
\]
Young's inequality and the Poincar\'e inequality imply for every $\eta>0$,
\[
\begin{aligned}
&2p\nu
\E\left[
\norm{u_\var^n(t)}^{2p-2}
\norm{u_\var^n(t)}_V^2
\right]
-
p\left[
c_1+c_2+4(p-1)C
\right]
\E\norm{u_\var^n(t)}^{2p}\\
&\qquad
-
p\left[
c_0+2(p-1)C
\right]
\E\norm{u_\var^n(t)}^{2p-2}\\
&\ge
p\left[
2\nu\lambda_1-c_1-c_2-4(p-1)C
\right]
\E\norm{u_\var^n(t)}^{2p}\\
&\qquad
-
p\left[
c_0+2(p-1)C
\right]
\left(
\eta\E\norm{u_\var^n(t)}^{2p}
+
C_{p,\eta}
\right)\\
&=
p\Big[
2\nu\lambda_1-c_1-c_2
-4(p-1)C
-\eta\big(
c_0+2(p-1)C
\big)
\Big]
\E\norm{u_\var^n(t)}^{2p}
-
C_{p,\eta}.
\end{aligned}
\]
Then we have
\begin{equation}
\label{eq:pullback-higher-moment-differential-NS}
\begin{aligned}
\frac{d}{dt}\E\norm{u_\var^n(t)}^{2p}
&+
p\Big[
2\nu\lambda_1-c_1-c_2
-4(p-1)C\\
&\qquad
-\eta\big(
c_0+2(p-1)C
\big)
\Big]
\E\norm{u_\var^n(t)}^{2p}
\le
C_{p,\eta}.
\end{aligned}
\end{equation}
Since \(2\nu\lambda_1>c_1+c_2\), we can choose \(p>1\) sufficiently close
to \(1\) and then \(\eta>0\) sufficiently small such that
\[
2\nu\lambda_1-c_1-c_2
-4(p-1)C
-\eta\bigl(c_0+2(p-1)C\bigr)>0.
\]
Then \eqref{eq:pullback-higher-moment-differential-NS} and
\(u_\var^n(-n)=0\) imply
\begin{equation}
\label{eq:pullback-higher-moment-NS}
\sup_{0<\var\le1}
\sup_{n\in\mathbb N}
\sup_{t\ge-n}
\E\norm{u_\var^n(t)}^{2p}
<\infty.
\end{equation}

Fix \(0<\var\le1\) and \(R\in\mathbb N\). By
\eqref{eq:pullback-V-bound-NS}, the compact embedding
\(V\hookrightarrow H\), and Chebyshev's inequality,
$\left\{
\Law{u_\var^n(-R)}:n\ge R+1
\right\}$
is tight in \(\cal P(H)\). Moreover, by
\eqref{eq:pullback-higher-moment-NS},
\[
\sup_{n\ge R+1}
\int_{\{\norm{x}>M\}}
\norm{x}^2\,\Law{u_\var^n(-R)}(dx)
\le
CM^{-2(p-1)}
\longrightarrow0,
\qquad M\to\infty.
\]
By Prokhorov's theorem and
\cite[Theorem 6.9]{villani2009optimal}, this family is relatively
compact in \((\cal P_2(H),W_2)\).
A diagonal argument gives a sequence
\(n_j\to\infty\) and \(\mu_\var^{-R}\in\cal P_2(H)\),
\(R\in\mathbb N\), such that
\begin{equation}
\label{eq:initial-W2-limit-NS}
W_2\left(
\Law{u_\var^{n_j}(-R)},\mu_\var^{-R}
\right)
\longrightarrow0,
\qquad j\to\infty,
\end{equation}
for every fixed \(R\). By lower semicontinuity,
\[
\sup_{R\in\mathbb N}
\int_H
\left(
\norm{x}_V^2+\norm{x}^{2p}
\right)\mu_\var^{-R}(dx)
<\infty.
\]
By Lemma \ref{lem:finite-window-law-stability-NS},
\eqref{eq:initial-W2-limit-NS}, \eqref{eq:pullback-V-bound-NS},
and \eqref{eq:pullback-higher-moment-NS}, the laws
$\Law{u_\var^{n_j}|_{[-R,R]}}$ are relatively compact for every
$R\in\N$.
A further diagonal extraction gives, along the same subsequence
$\{n_j\}$,
\begin{equation}
\label{eq:window-path-law-limit-NS}
\Law{u_\var^{n_j}|_{[-R,R]}}
\Rightarrow
\Law{u_{\var,R}}
\quad\text{in }\cal P(C([-R,R];H)),
\qquad R\in\N,
\end{equation}
where $u_{\var,R}$ is a variational martingale solution of
\eqref{eq:SPDEone} on $[-R,R]$ with initial law
$\mu_\var^{-R}$ at time $-R$.
For \(R_2>R_1\), using the continuity of the restriction map and
\eqref{eq:window-path-law-limit-NS}, we obtain
\begin{equation}
\label{eq:consistent-window-laws-NS}
\Law{
u_{\var,R_2}|_{[-R_1,R_1]}
}
=
\Law{u_{\var,R_1}}.
\end{equation}
Thus the consistent laws
\(\{\Law{u_{\var,R}}\}_{R\in\mathbb N}\) determine a law on
\(C_{\mathrm{loc}}(\R;H)\). Denote its coordinate process by
\(u_\var\). By \eqref{eq:consistent-window-laws-NS}, for every
\(R\in\mathbb N\),
\[\Law{u_\var|_{[-R,R]}}
=
\Law{u_{\var,R}}.\]
Let \([s,t]\subset\R\) be an arbitrary finite interval and choose
\(R\in\mathbb N\) sufficiently large such that
$[s,t]\subset[-R,R]$.
Then
\[\Law{u_\var|_{[s,t]}}
=
\Law{u_{\var,R}|_{[s,t]}}.\]
Since $u_{\var,R}$ is a variational martingale solution of
\eqref{eq:SPDEone} on \([-R,R]\), its restriction to \([s,t]\) is a
variational martingale solution of \eqref{eq:SPDEone} on $[s,t]$ with initial
law $\Law{u_{\var,R}(s)}=
\Law{u_\var(s)}$.
Therefore,
\(\Law{u_\var(\cdot)}\) is a complete variational solution law of
\eqref{eq:SPDEone}.

Fix \(t\in\R\) and choose \(R\in\mathbb N\) with \(R>|t|\). Using
\eqref{eq:window-path-law-limit-NS}, the Portmanteau theorem, and the
lower semicontinuity on \(C([-R,R];H)\) of
\[
x\mapsto
\norm{x(t)}^2+\norm{x(t)}_V^2+\norm{x(t)}^{2p},
\]
we obtain
\[
\begin{aligned}
&\E\norm{u_\var(t)}^2
+
\E\norm{u_\var(t)}_V^2
+
\E\norm{u_\var(t)}^{2p}\\
&\qquad\le
\liminf_{j\to\infty}
\left[
\E\norm{u_\var^{n_j}(t)}^2
+
\E\norm{u_\var^{n_j}(t)}_V^2
+
\E\norm{u_\var^{n_j}(t)}^{2p}
\right].
\end{aligned}
\]
Using \eqref{eq:pullback-H-bound-NS},
\eqref{eq:pullback-V-bound-NS}, and
\eqref{eq:pullback-higher-moment-NS}, we further obtain
\[
\sup_{0<\var\le1}\sup_{t\in\R}
\E\left(
\norm{u_\var(t)}^2
+
\norm{u_\var(t)}_V^2
+
\norm{u_\var(t)}^{2p}
\right)
<\infty.
\]

By Remarks \ref{rem:averaged-coefficients-NS}(ii) and
\ref{rem:bar}, the argument yielding
\eqref{eq:complete-uniform-bound-NS} applies to
\eqref{eq:SPDEtwo} and gives a bounded complete variational
solution law satisfying
\[
\sup_{t\in\R}
\E\left(
\norm{\bar u(t)}^2
+
\norm{\bar u(t)}_V^2
+
\norm{\bar u(t)}^{2p}
\right)
<\infty.
\]
\end{proof}

\begin{cor}
\label{cor:finite-time-compact-law-averaging-NS}
Assume that \ref{item:H-f}, \ref{item:H-g}, and \ref{item:H-a} hold.
Let $T>0$, and let $K\subset\cal P_2(H)$ be compact in
$(\cal P_2(H),W_2)$.
Assume that, for some $p>1$,
\[
\sup_{\mu\in K}
\int_H\left(\norm{x}_V^2+\norm{x}^{2p}\right)\mu(dx)<\infty.
\]
Let $\var_n\to0$, $\tau_n\in\R$, and $\mu_n\in K$.
For each $n$, let $u_{\var_n}$ be any variational martingale
solution of \eqref{eq:SPDEone} on $[\tau_n,\tau_n+T]$
with initial law $\mu_n$.
Then $\{\Law{u_{\var_n}(\tau_n+\cdot)}:n\ge1\}$ is relatively
compact in $\cal P(C([0,T];H))$.
Every limit point is the path law of a variational martingale
solution of \eqref{eq:SPDEtwo} on $[0,T]$.
Its initial law is the $W_2$-limit of $\mu_n$ along the same
subsequence.
\end{cor}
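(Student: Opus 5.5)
The plan is to reduce the corollary to a shifted version of the argument in Theorem \ref{thmT} combined with Lemma \ref{lem:finite-window-law-stability-NS}. Set $v_n(t):=u_{\var_n}(\tau_n+t)$ for $t\in[0,T]$. Then $v_n$ is a variational martingale solution on $[0,T]$ of the equation with coefficients $f_n(t,x,\mu):=f(\tau_n/\var_n+t/\var_n,x,\mu)$ and $g_n$ defined analogously, driven by the shifted Wiener process $W_{\tau_n+\cdot}-W_{\tau_n}$, with initial law $\mu_n$. The key observation is that $(f(\cdot+\tau_n/\var_n),g(\cdot+\tau_n/\var_n))$ satisfies \ref{item:H-f}, \ref{item:H-g}, and \ref{item:H-a} with the same constants, the same $\omega^f,\omega^g$, and the same $\bar f,\bar g$. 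The bounds are uniform in time, and the supremum over $a\in\R$ in \eqref{wf}--\eqref{wg} makes \ref{item:H-a} translation invariant (see Remark \ref{rem:shift-uniform-first}(iii)). Consequently every constant produced below is independent of $n$.

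First, by compactness of $K$ in $W_2$, I pass to a subsequence along which $W_2(\mu_n,\mu)\to0$ for some $\mu\in K$. This also gives $\sup_n\int(\norm{x}_V^2+\norm{x}^{2p})\,d\mu_n<\infty$. Next I derive uniform estimates. Lemma \ref{lemone} with exponent $p$ and Lemma \ref{lemtwo} with exponent $1$ are applied to $v_n$. These lemmas are stated for a fixed $f$, but their proofs use only the uniform bounds \eqref{fcon}, \eqref{gcon}, and \ref{item:H-g}(ii). The resulting bound is
\[
\sup_n\Big[\E\sup_{[0,T]}\norm{v_n}^{2p}+\E\sup_{[0,T]}\norm{v_n}_V^2+\E\int_0^T\norm{Av_n}^2dt\Big]<\infty.
\]
Tightness in $\cal P(C([0,T];H))$ then follows verbatim from the proof of Lemma \ref{lem:finite-window-law-stability-NS}. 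That proof has three ingredients: Kolmogorov's criterion for the stochastic integral in $C^\gamma([0,T];H)$ with $\gamma<(p-1)/(2p)$, the bound $\norm{B(u,u)}\le C\norm{u}_V\norm{Au}$ for the remaining drift increment, and Arzel\`a--Ascoli with $V\hookrightarrow H$ compact. None of these used fixed coefficients. Together with Prokhorov's theorem, this yields the relative compactness. Using the uniform $2p$-moment bound with $p>1$, any weak limit $v$ also satisfies $W_2(\Law{v_n},\Law{v})\to0$ on path space, hence $\sup_t W_2(\Law{v_n(t)},\Law{v(t)})\to0$. In addition, $\Law{v(0)}=\mu$.

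To identify the limit, I replace $f_n,g_n$ by $\bar f,\bar g$ along $v_n$, repeating the residual estimate \eqref{eq:thmT-residual-limit-NS}. Split $[0,T]$ into blocks of length $\delta$ and freeze $(v_n,\Law{v_n})$ at $k\delta$. The continuity estimate of Lemma \ref{Law-con} holds for $v_n$ uniformly in $n$, because its proof needs only $\sup_n\E\norm{v_n(0)}_V^4<\infty$. This is the step I expect to be the main obstacle: here we only have $V$-second moments, not fourth moments, so the bound on the $B$-term in \eqref{33:3} must be redone. I would instead write $v_n(t_2)-v_n(t_1)$ via the equation and use the $L^2(0,T;D(A))$ bound from Lemma \ref{lemtwo} with $p=1$. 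This gives $\E\norm{v_n(t_2)-v_n(t_1)}\le C_T(t_2-t_1)^{1/2}$, or a H\"older-type modulus with an $L^1$ rather than $L^2$ bound. That modulus should suffice, because the Lipschitz bounds in \ref{item:H-f}, \ref{item:H-g}(i) together with uniform integrability from the $2p$-moments upgrade convergence in probability of $A_1,A_3$ to $L^1$ and $L^2$ convergence. On each block, \eqref{wf} and \eqref{wg} applied with window length $\delta/\var_n$ and shift $a=(\tau_n+k\delta)/\var_n$ control $A_2$ by $\omega^f(\delta/\var_n)$ and $\omega^g(\delta/\var_n)$. Choosing $\delta=\sqrt{\var_n}$ makes the drift and diffusion residuals tend to $0$ in $L^1(\Omega;C([0,T];H))$.

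Finally, I identify the limit through the martingale problem, exactly as in the last part of the proof of Theorem \ref{thmT}. I tighten jointly with the normalized Wiener processes in $C([0,T];U_0)$, pass to the limit in $M_{n,j}$, in $M_{n,i}M_{n,j}-\langle M_{n,i},M_{n,j}\rangle$, and in the cross variations with $W$, using the residual limit, the marginal $W_2$ convergence, \eqref{eq:thmT-nonlinear-continuity-NS}, and uniform integrability from $p>1$. L\'evy's characterization and the martingale representation then show that $v$ is a variational martingale solution of \eqref{eq:SPDEtwo} on $[0,T]$ with initial law $\mu=\lim_n\mu_n$ in $W_2$.
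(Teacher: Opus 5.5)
Your proposal is correct, and its skeleton matches the paper's proof. That skeleton is: shift to $[0,T]$ using the translation invariance of \ref{item:H-f}--\ref{item:H-a}; uniform bounds from Lemma \ref{lemone} (exponent $p$) and Lemma \ref{lemtwo} (exponent $1$); tightness via the increment estimates of Lemma \ref{lem:finite-window-law-stability-NS}; path-space $W_2$ convergence from the $2p$-moments; block freezing with $\delta=\sqrt{\var_n}$ and shifted windows in \eqref{wf}--\eqref{wg}; and the martingale identification from Theorem \ref{thmT}.

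You genuinely diverge only in the time regularity used for the freezing step. This is exactly where Lemma \ref{Law-con} is unavailable, so your sentence asserting that it holds for $v_n$ should be deleted.

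The paper argues softly. Tightness gives $\lim_{\delta\downarrow0}\sup_n\P(w(U_n,\delta)>\eta)=0$ for the path modulus $w$. Uniform integrability of $\sup_t\norm{U_n(t)}^2$ then upgrades this to $\lim_{\delta\downarrow0}\sup_n\E\sup_{|t-s|\le\delta}\norm{U_n(t)-U_n(s)}^2=0$, without a rate.

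You instead take expectations in the pathwise increment bound, using $\E\sup_t\norm{v_n(t)}_V^2$, $\E\int_0^T\norm{Av_n}^2dt$ and $\norm{B(u,u)}\le C\norm{u}_V\norm{Au}$. This gives $\E\norm{v_n(t)-v_n(s)}\le C_T|t-s|^{1/2}$. You then interpolate against the uniform $2p$-moment bound; H\"older gives $\E\norm{v_n(t)-v_n(s)}^2\le C|t-s|^{(p-1)/(2p-1)}$.

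This $L^2$ upgrade is not optional, and you should state it explicitly. The measure part of $A_1,A_3$ is controlled only through $W_2(\Law{v_n(s)},\Law{v_n(k\delta)})^2\le\E\norm{v_n(s)-v_n(k\delta)}^2$, and the diffusion freezing also needs squared increments.

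With this step, your route gives an explicit residual rate of order $\var_n^{(p-1)/(4p-2)}+\omega^f(\var_n^{-1/2})^2+\omega^g(\var_n^{-1/2})$, and it uses only pointwise increments, which is all the freezing requires. The paper's route is shorter because it recycles the tightness already proved, but it yields no rate.
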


\begin{proof}
For $0\le t\le T$, set $U_n(t):=u_{\var_n}(\tau_n+t)$.
By Lemma \ref{lemone}, Lemma \ref{lemtwo} with $p=1$, and
the argument yielding
\eqref{eq:finite-window-uniform-estimates-NS}, we obtain
\begin{equation}
\label{eq:compact-averaging-uniform-estimates-NS}
\sup_{n\ge1}
\left[
\E\sup_{0\le t\le T}\norm{U_n(t)}^{2p}
+
\E\sup_{0\le t\le T}\norm{U_n(t)}_V^2
+
\E\int_0^T\norm{AU_n(t)}^2\,dt
\right]
<\infty.
\end{equation}

Repeating the stochastic and deterministic increment estimates used
in the proof of \eqref{eq:finite-window-path-law-tightness-NS} on \([\tau_n,\tau_n+T]\), and using
\eqref{eq:compact-averaging-uniform-estimates-NS}, we can conclude that
\[\{\Law{U_n}:n\ge1\} \text{ is tight in } \cal P(C([0,T];H)).\]
Since tightness gives uniform convergence in
probability of the path moduli of continuity, while
\eqref{eq:compact-averaging-uniform-estimates-NS} and \(p>1\) give the
required uniform integrability, we further obtain
\begin{equation}
\label{eq:compact-averaging-modulus-NS}
\lim_{\delta\downarrow0}
\sup_{n\ge1}
\E
\sup_{\substack{0\le s,t\le T\\|t-s|\le\delta}}
\norm{U_n(t)-U_n(s)}^2
=0.
\end{equation}
By the same partition estimates used to derive
\eqref{eq:thmT-residual-limit-NS}, applied on
\([\tau_n,\tau_n+T]\), and using \ref{item:H-a},
\eqref{eq:compact-averaging-uniform-estimates-NS}, and
\eqref{eq:compact-averaging-modulus-NS}, we obtain
\[
\begin{aligned}
&\E\sup_{0\le t\le T}
\left\|
\int_0^t
\left[
f\left(
\frac{\tau_n+s}{\var_n},
U_n(s),
\Law{U_n(s)}
\right)
-
\bar f\left(
U_n(s),
\Law{U_n(s)}
\right)
\right]ds
\right\|^2\\
&\qquad\le
C_T\left[
\E\sup_{\substack{0\le r,q\le T\\
|r-q|\le\sqrt{\var_n}}}
\norm{U_n(r)-U_n(q)}^2
+
\sqrt{\var_n}
+
\left(
\omega^f\left(\frac{1}{\sqrt{\var_n}}\right)
\right)^2
\right]
\longrightarrow0,
\qquad n\to\infty.
\end{aligned}
\]
Similarly, by \eqref{wg},
\[
\begin{aligned}
&\E\int_0^T
\left\|
g\left(
\frac{\tau_n+s}{\var_n},
U_n(s),
\Law{U_n(s)}
\right)
-
\bar g\left(
U_n(s),
\Law{U_n(s)}
\right)
\right\|_{L_2(U,H)}^2\,ds\\
&\qquad\le
C_T\left[
\E\sup_{\substack{0\le r,q\le T\\
|r-q|\le\sqrt{\var_n}}}
\norm{U_n(r)-U_n(q)}^2
+
\sqrt{\var_n}
+
\omega^g\left(\frac{1}{\sqrt{\var_n}}\right)
\right]
\longrightarrow0,
\qquad n\to\infty.
\end{aligned}
\]
Hence
\begin{equation}
\label{eq:compact-averaging-residual-limit-NS}
\begin{aligned}
&\E\sup_{0\le t\le T}
\left\|
\int_0^t
\left[
f\left(
\frac{\tau_n+s}{\var_n},
U_n(s),
\Law{U_n(s)}
\right)
-
\bar f\left(
U_n(s),
\Law{U_n(s)}
\right)
\right]ds
\right\|^2\\
&\quad+
\E\int_0^T
\left\|
g\left(
\frac{\tau_n+s}{\var_n},
U_n(s),
\Law{U_n(s)}
\right)
-
\bar g\left(
U_n(s),
\Law{U_n(s)}
\right)
\right\|_{L_2(U,H)}^2\,ds
\longrightarrow0,
\qquad n\to\infty.
\end{aligned}
\end{equation}
By Prokhorov's theorem, 
\[\{\Law{U_n}:n\ge1\} \text{ is relatively compact in } \cal P(C([0,T];H)).\]
Fix any convergent subsequence, still denoted by $\{\Law{U_n}\}$, and a
$C([0,T];H)$-valued random variable $U_*$ such that
$\Law{U_n}\Rightarrow\Law{U_*}$.
By the compactness of $K$ in $(\cal P_2(H),W_2)$ and continuity
of evaluation at zero, we obtain
\[W_2(\mu_n,\mu)\to0,\quad\text{where } \mu=\Law{U_*(0)}\in K.\]
Since \eqref{eq:compact-averaging-uniform-estimates-NS} implies
\[
\sup_{n\ge1}
\E\left(
\sup_{0\le t\le T}\norm{U_n(t)}^2
\right)^p
<\infty,
\]
the squared supremum norms are uniformly integrable. Therefore,
\begin{equation}
\label{eq:compact-averaging-path-W2-limit-NS}
\begin{aligned}
W_2\left(
\Law{U_n},\Law{U_*}
\right)&\longrightarrow0
\quad\text{in }\cal P_2(C([0,T];H)),\\
\sup_{0\le t\le T}
W_2\left(
\Law{U_n(t)},\Law{U_*(t)}
\right)&\longrightarrow0,
\qquad
\Law{U_*(0)}=\mu .
\end{aligned}
\end{equation}
Write \(W\) for the shifted driving Wiener process of \(U_n\), with
\(W_0=0\). By the argument yielding
\eqref{eq:thmT-joint-law-limit-NS}, after passing to a further
subsequence,
\[
\Law{(U_n,W)}
\Rightarrow
\Law{(U_*,W^*)}.
\]

By \ref{item:H-f}, \ref{item:H-g}(i),
\eqref{eq:compact-averaging-uniform-estimates-NS},
\eqref{eq:compact-averaging-residual-limit-NS},
\eqref{eq:compact-averaging-path-W2-limit-NS}, and
\eqref{eq:thmT-nonlinear-continuity-NS}, the argument yielding
\eqref{eq:thmT-limit-covariations-NS} applies with \(2p\)-moments
in place of fourth moments,
\eqref{eq:compact-averaging-uniform-estimates-NS} gives the required
uniform integrability. Applying the argument deriving the variational
identity from \eqref{eq:thmT-limit-covariations-NS}, together with
weak lower semicontinuity and
\eqref{eq:compact-averaging-uniform-estimates-NS}, we obtain that
\(U_*\) is a variational martingale solution of
\eqref{eq:SPDEtwo} with initial law \(\mu\). Since the convergent
subsequence was arbitrary, every limit point has this property,
which proves the assertion.
\end{proof}

\subsection{Proof of the weak second Bogolyubov theorem}
With the above estimates at hand, we can now prove the weak second Bogolyubov theorem.

\begin{proof}[Proof of Theorem \ref{thmR}]
Assertions (1) and (2) follow from Lemma
\ref{lem:complete-law-NS}. We prove assertion (3).

Let \(\var_n\to0\), and let
\(\Law{u_{\var_n}(\cdot)}\) be the bounded complete variational
solution law of \eqref{eq:SPDEone} given by
Lemma \ref{lem:complete-law-NS}. For \(R\in\N\), set
\(\mu_n^R:=\Law{u_{\var_n}(-R)}\). By
\eqref{eq:complete-uniform-bound-NS},
\begin{equation}
\label{eq:thmR-initial-law-moment-NS}
\sup_{R\in\N}\sup_{n\ge1}
\int_H
\left(
\norm{x}_V^2+\norm{x}^{2p}
\right)\mu_n^R(dx)
<\infty.
\end{equation}
For each fixed \(R\in\N\), repeating the \(W_2\)-compactness argument
leading to \eqref{eq:initial-W2-limit-NS}, and then using a diagonal
argument in \(R\), we can find a subsequence, still denoted by \(n\),
and \(\mu^{-R}\in\cal P_2(H)\), \(R\in\N\), such that
\begin{equation}
\label{eq:thmR-initial-law-W2-limit-NS}
W_2(\mu_n^R,\mu^{-R})\longrightarrow0,
\qquad R\in\N.
\end{equation}
Moreover, by \eqref{eq:thmR-initial-law-moment-NS} and lower
semicontinuity,
\begin{equation}
\label{eq:thmR-limit-initial-law-moment-NS}
\sup_{R\in\N}
\int_H
\left(
\norm{x}_V^2+\norm{x}^{2p}
\right)\mu^{-R}(dx)
\le C.
\end{equation}
For every fixed \(R\), \eqref{eq:thmR-initial-law-W2-limit-NS} implies
that
\(\{\mu_n^R:n\ge1\}\cup\{\mu^{-R}\}\) is compact in
\((\cal P_2(H),W_2)\), while
\eqref{eq:thmR-initial-law-moment-NS} and
\eqref{eq:thmR-limit-initial-law-moment-NS} give the uniform moment
bound required in Corollary
\ref{cor:finite-time-compact-law-averaging-NS}.

By Definition \ref{def:complete-variational-solution-law-NS},
$\Law{u_{\var_n}|_{[-R,R]}}$ is the path law of a variational
martingale solution of \eqref{eq:SPDEone} on $[-R,R]$
with initial law $\mu_n^R$ at time $-R$.
Applying Corollary \ref{cor:finite-time-compact-law-averaging-NS}
with $\tau_n=-R$ and $T=2R$, 
we obtain 
\[\{\Law{u_{\var_n}|_{[-R,R]}}\} \quad\text{is relatively compact in }
\cal P(C([-R,R];H)).\]
By \eqref{eq:thmR-initial-law-W2-limit-NS}, every limit point
is the path law of a variational martingale solution of
\eqref{eq:SPDEtwo} on $[-R,R]$ with initial law $\mu^{-R}$.
By a diagonal argument and \eqref{eq:consistent-window-laws-NS},
the limiting path laws are consistent.
The construction following \eqref{eq:consistent-window-laws-NS}
then gives a complete variational solution law $\Law{u_*(\cdot)}$
of \eqref{eq:SPDEtwo} such that
\begin{equation}
\label{eq:thmR-window-law-convergence-NS}
\Law{u_{\var_n}|_{[-R,R]}}
\Rightarrow
\Law{u_*|_{[-R,R]}}
\quad\text{in }\cal P(C([-R,R];H)),
\qquad R\in\N.
\end{equation}

Moreover, since \eqref{eq:thmR-window-law-convergence-NS} holds for
every \(R\in\N\) along the same subsequence, we obtain
\begin{equation}
\label{eq:thmR-complete-path-law-convergence-NS}
\Law{u_{\var_n}}
\Rightarrow
\Law{u_*}
\quad\text{in }\cal P(C_{\mathrm{loc}}(\R;H)).
\end{equation}
Fix \(t\in\R\) and choose \(R>|t|\). By
\eqref{eq:thmR-window-law-convergence-NS}, the Portmanteau theorem,
the lower semicontinuity of
\[
x\mapsto
\norm{x(t)}^2+\norm{x(t)}_V^2+\norm{x(t)}^{2p},
\]
and \eqref{eq:complete-uniform-bound-NS},
\[
\begin{aligned}
\E\left(
\norm{u_*(t)}^2
+
\norm{u_*(t)}_V^2
+
\norm{u_*(t)}^{2p}
\right)
&\le
\liminf_{n\to\infty}
\E\left(
\norm{u_{\var_n}(t)}^2
+
\norm{u_{\var_n}(t)}_V^2
+
\norm{u_{\var_n}(t)}^{2p}
\right)\\
&\le C.
\end{aligned}
\]
Hence
\[
\sup_{t\in\R}
\E\left(
\norm{u_*(t)}^2
+
\norm{u_*(t)}_V^2
+
\norm{u_*(t)}^{2p}
\right)
<\infty.
\]

As \(\var_n\to0\) was arbitrary, every sequence admits a subsequence
satisfying \eqref{eq:thmR-complete-path-law-convergence-NS}.
The further extractions leading to
\eqref{eq:thmR-window-law-convergence-NS} preserve the same limit,
so every limit point is a bounded complete variational solution law
of \eqref{eq:SPDEtwo}.
\end{proof}

\begin{rem}
\label{cor:weak-second-unique-NS}
In Theorem \ref{thmR}, if the bounded complete variational solution
law $\Law{\bar u}$ of \eqref{eq:SPDEtwo} satisfying
\eqref{eq:thmR-averaged-bound-NS} is unique, then
\[
\Law{u_\var}\Rightarrow\Law{\bar u}
\quad\text{in }\cal P(C_{\mathrm{loc}}(\R;H)),
\qquad \var\to0.
\]
However, this uniqueness is not implied by the assumptions of
Theorem \ref{thmR}; a counterexample is given in Example \ref{no-uniqueness}.
\end{rem}

\iffalse
\begin{cor}
\label{cor:weak-second-unique-NS}
Assume that \ref{item:H-f}, \ref{item:H-g}, \ref{item:H-a}, and
\ref{item:H-d} hold, with \(2\nu\lambda_1>c_1+c_2\).
If \eqref{eq:SPDEtwo} admits a unique bounded complete variational 
solution law \(\Law{\bar u(\cdot)}\) satisfying, for some \(p>1\),
\[
\sup_{t\in\R}
\E\left(
\norm{\bar u(t)}^2
+
\norm{\bar u(t)}_V^2
+
\norm{\bar u(t)}^{2p}
\right)
<\infty,
\]
then
\[
\lim_{\var\to0}
d_{BL}
\left(
\Law{u_\var},
\Law{\bar u}
\right)
=0
\]
in \(\cal P(C_{\mathrm{loc}}(\R;H))\).
\end{cor}

\begin{proof}
Suppose that the conclusion is false. Then there exist
\(\eta_0>0\) and \(\var_n\to0\) such that
\[
d_{BL}
\left(
\Law{u_{\var_n}},
\Law{\bar u}
\right)
\ge\eta_0,
\qquad n\ge1.
\]
By Theorem \ref{thmR} (3), we can find a subsequence, still denoted by
\(\var_n\), and a bounded complete variational solution law $\Law{u_*(\cdot)}$ of
\eqref{eq:SPDEtwo} such that
\[
\Law{u_{\var_n}}
\Rightarrow
\Law{u_*}
\quad\text{in }\cal P(C_{\mathrm{loc}}(\R;H)).
\]
By uniqueness, \(\Law{u_*}=\Law{\bar u}\). Since \(d_{BL}\) metrizes
weak convergence on \(\cal P(C_{\mathrm{loc}}(\R;H))\),
\[
d_{BL}
\left(
\Law{u_{\var_n}},
\Law{\bar u}
\right)
\longrightarrow0,
\]
which contradicts the choice of \(\var_n\).
\end{proof}
\fi

\section{Global Averaging Principle in the Weak Sense}

In this section, we construct weak pullback attractors for all
variational solution laws and establish their upper semicontinuity
under averaging. For the general theory of pullback attractors for
nonautonomous dynamical systems, we refer to \cite{CLR2006}.

\subsection{Absorbing sets and weak attractors}

Solutions are understood in the sense of Definition
\ref{def:variational-martingale-solution-NS}.
For $F\in\cal H(F_0)$, equation \eqref{eq:SPDEone} with
coefficients $F$ is obtained by replacing $f$ and $g$
with the corresponding components of $F$.

Let $p>1$ be fixed as in
\eqref{eq:pullback-higher-moment-differential-NS}.
We denote by $\cal D_p$ the collection of all subsets
$K\subset\cal P_2(H)$ satisfying
\[
\sup_{\mu\in K}\int_H\norm{x}^{2p}\,\mu(dx)<\infty.
\]

\begin{de}
\label{def:pullback-absorbing-set-Dp-NS}
Fix \(0<\var\le1\).
A set \(B\subset\cal P_2(H)\) is called a \emph{pullback absorbing set}
for \eqref{eq:SPDEone} with respect to \(\cal D_p\) if, for every
\(K\in\cal D_p\) and \(F\in\cal H(F_0)\), there exists \(T_K>0\)
such that, for every \(t\ge T_K\), \(\mu\in K\), and every solution
\(u_\var\) on \([-t,0]\) with coefficients \(F\) and
\(\Law{u_\var(-t)}=\mu\),
\[\Law{u_\var(0)}\in B.\]

For \eqref{eq:SPDEtwo}, \(B\) is called an \emph{absorbing set}
with respect to \(\cal D_p\) if, for every \(K\in\cal D_p\), there
exists \(T_K>0\) such that, for every \(t\ge T_K\), \(\mu\in K\),
and every solution \(\bar u\) on \([0,t]\) with
\(\Law{\bar u(0)}=\mu\),
\[\Law{\bar u(t)}\in B.\]
\end{de}

\begin{de}
\label{def:weak-pullback-attractor-NS}
Fix $0<\var\le1$.
A family $\{\cal A^\var(F)\}_{F\in\cal H(F_0)}$ is called a
\emph{weak pullback attractor} for \eqref{eq:SPDEone}
with respect to $\cal D_p$ if:
\begin{itemize}
\item[(i)] For every $F\in\cal H(F_0)$,
$\cal A^\var(F)\subset\cal P_2(H)$ is nonempty and compact
in the $d_{BL}$-topology.
\item[(ii)] For every $F\in\cal H(F_0)$ and every nonempty
$K\in\cal D_p$,
\[
\lim_{t\to\infty}
\sup_{u_\var}
\inf_{\nu\in\cal A^\var(F)}
d_{BL}\bigl(\Law{u_\var(0)},\nu\bigr)=0,
\]
where the supremum is over all solutions $u_\var$ on $[-t,0]$
with coefficients $F$ and $\Law{u_\var(-t)}\in K$.
\item[(iii)] For every $F\in\cal H(F_0)$,
$\cal A^\var(F)$ is contained in every $d_{BL}$-closed subset
of $\cal P_2(H)$ satisfying \textup{(ii)} for this $F$.
\end{itemize}
For \eqref{eq:SPDEtwo}, a nonempty $d_{BL}$-compact set
$\bar{\cal A}\subset\cal P_2(H)$ is called a
\emph{weak global attractor} with respect to $\cal D_p$
if it is the smallest $d_{BL}$-closed set satisfying
\[
\lim_{t\to\infty}
\sup_{\bar u}
\inf_{\nu\in\bar{\cal A}}
d_{BL}\bigl(\Law{\bar u(t)},\nu\bigr)=0
\]
for every nonempty $K\in\cal D_p$.
Here the supremum is over all solutions $\bar u$ on $[0,t]$
with $\Law{\bar u(0)}\in K$.
\end{de}

\begin{lem}
\label{lem:pullback-absorbing-set-NS}
Assume that \ref{item:H-f}, \ref{item:H-g}(i), and
\ref{item:H-d} hold with $2\nu\lambda_1>c_1+c_2$.
Then, for every $0<\var\le1$, equation \eqref{eq:SPDEone}
admits a pullback absorbing set $B_{R,p}$ with respect to
$\cal D_p$.

If \ref{item:H-a} also holds, the same set is absorbing
for \eqref{eq:SPDEtwo} with respect to $\cal D_p$.
\end{lem}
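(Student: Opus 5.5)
The plan is to take the absorbing set to be a ball in the second and $2p$-th moments,
\[
B_{R,p}:=\left\{\mu\in\cal P_2(H):\int_H\norm{x}^2\,\mu(dx)\le R_1,\ \int_H\norm{x}^{2p}\,\mu(dx)\le R_p\right\},
\]
where $p>1$ is the exponent fixed in \eqref{eq:pullback-higher-moment-differential-NS}. The radii will be $R_1:=1+c_0/\rho$ with $\rho=2\nu\lambda_1-c_1-c_2$, and $R_p:=1+2C_{p,\eta}/\kappa$, where $\kappa>0$ is the bracketed coefficient in \eqref{eq:pullback-higher-moment-differential-NS}. The set $B_{R,p}$ does not depend on $\var$ or on $F$, and it lies in $\cal D_p$.

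The estimates must hold uniformly over the hull. By Remark \ref{rem:shift-uniform-first}(iii), every $F\in\cal H(F_0)$ satisfies \ref{item:H-f}, \ref{item:H-g}(i) and \ref{item:H-d} with the same constants. Hence Lemma \ref{lem:energy-dissipativity-NS} and the differential inequality \eqref{eq:pullback-higher-moment-differential-NS} hold with the same $\rho,\kappa,C_{p,\eta}$ for every solution on $[-t,0]$ with coefficients $F$. The derivation of \eqref{eq:pullback-higher-moment-differential-NS} only used It\^o's formula, \eqref{BzeroH}, \eqref{Hfour}, \eqref{gcon} and H\"older's inequality. It never used the zero initial datum, so it applies to an arbitrary initial law in $\cal D_p$. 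Finiteness of $\E\norm{u_\var(r)}^{2p}$ on $[-t,0]$ follows from Lemma \ref{lemone}, since $\int\norm{x}^{2p}d\mu<\infty$, and this justifies the differential form.

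Let $K\in\cal D_p$ and set $M_K:=\sup_{\mu\in K}\int\norm{x}^{2p}d\mu$. By Jensen's inequality, $\int\norm{x}^2d\mu\le M_K^{1/p}$ for every $\mu\in K$. Lemma \ref{lem:energy-dissipativity-NS} on $[-t,0]$ gives
\[
\E\norm{u_\var(0)}^2\le e^{-\rho t}M_K^{1/p}+c_0/\rho,
\]
and Gronwall's inequality applied to \eqref{eq:pullback-higher-moment-differential-NS} gives
\[
\E\norm{u_\var(0)}^{2p}\le e^{-\kappa t}M_K+C_{p,\eta}/\kappa.
\]
Choose $T_K$ so large that $e^{-\rho T_K}M_K^{1/p}\le1$ and $e^{-\kappa T_K}M_K\le C_{p,\eta}/\kappa+1$. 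Then for every $t\ge T_K$, every $\mu\in K$, every $F$ and every solution, we get $\Law{u_\var(0)}\in B_{R,p}$. Here $T_K$ depends only on $M_K$, not on $\var$ or $F$.

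For \eqref{eq:SPDEtwo}, Remarks \ref{rem:averaged-coefficients-NS}(ii) and \ref{rem:bar} show that $\bar f,\bar g$ satisfy the same bounds with the same constants. The identical estimates on $[0,t]$ then yield $\Law{\bar u(t)}\in B_{R,p}$ for $t\ge T_K$.

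The main point needing care is the $2p$-moment inequality for general initial data. The constant $C$ from \eqref{gcon} must be independent of $F$, and the choice of $p,\eta$ made in Lemma \ref{lem:complete-law-NS} must be reused verbatim so that the same $p$ defines $\cal D_p$. Rigorously, It\^o's formula for $\norm{\cdot}^{2p}$ should be applied with stopping times and the expectation taken afterwards. The resulting bounds, uniform in the stopping level, pass to the limit by Fatou's lemma, as in Lemma \ref{lemone}.
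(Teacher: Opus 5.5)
Your proposal is correct and is essentially the paper's argument: Gronwall's inequality applied to the $2p$-moment differential inequality \eqref{eq:pullback-higher-moment-differential-NS} gives absorption into a $2p$-moment ball, with constants and absorbing time $T_K$ independent of $\var$ and of $F\in\cal H(F_0)$ (they depend only on $M_K$). The only difference is that you also impose a second-moment bound in $B_{R,p}$; this is harmless but redundant, since $\int_H\norm{x}^2\,\mu(dx)\le\bigl(\int_H\norm{x}^{2p}\,\mu(dx)\bigr)^{1/p}$ by Jensen, so the paper takes just the ball $\{\mu:\int_H\norm{x}^{2p}\,\mu(dx)\le R_p\}$ and does not need Lemma \ref{lem:energy-dissipativity-NS} here.
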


\begin{proof}
By \eqref{eq:pullback-higher-moment-differential-NS}, set
\[
\alpha_p
:=
p\left[
2\nu\lambda_1-c_1-c_2-4(p-1)C
-\eta\left(c_0+2(p-1)C\right)
\right]>0,
\qquad
C_p:=C_{p,\eta}.
\]
Choose \(R_p>C_p/\alpha_p\) and set
\[
B_{R,p}
:=
\left\{
\mu\in\cal P_2(H):
\int_H\norm{x}^{2p}\,\mu(dx)\le R_p
\right\}.
\]
For \(K\in\cal D_p\), set
\[
M_K
:=
\sup_{\mu\in K}
\int_H\norm{x}^{2p}\,\mu(dx)<\infty.
\]
Fix \(0<\var\le1\), \(F\in\cal H(F_0)\), \(t>0\), and
\(\mu\in K\), and let \(u_\var\) be any solution of
\eqref{eq:SPDEone} on \([-t,0]\) with coefficients \(F\) and
\(\Law{u_\var(-t)}=\mu\). 
The bounds in \ref{item:H-f}, \ref{item:H-g}(i), and
\ref{item:H-d} hold uniformly for every \(F\in\cal H(F_0)\).
Thus, applying the argument yielding
\eqref{eq:pullback-higher-moment-differential-NS} on \([-t,0]\)
and Gronwall's inequality, we obtain
\begin{align}
\label{97:11}
\E\norm{u_\var(0)}^{2p}
&\le
e^{-\alpha_p t}\E\norm{u_\var(-t)}^{2p}
+\frac{C_p}{\alpha_p}\notag\\
&=
e^{-\alpha_p t}
\int_H\norm{x}^{2p}\,\mu(dx)
+\frac{C_p}{\alpha_p}
\le
e^{-\alpha_p t}M_K+\frac{C_p}{\alpha_p}.
\end{align}
Choose \(T_K>0\) such that
\[e^{-\alpha_p T_K}M_K+C_p/\alpha_p\le R_p.\]
Then \eqref{97:11} implies
\(\Law{u_\var(0)}\in B_{R,p}\) for every \(t\ge T_K\).
Since \(\alpha_p\) and \(C_p\) are independent of \(\var\) and \(F\),
so is \(T_K\). Hence \(B_{R,p}\) is pullback absorbing in the sense of
Definition \ref{def:pullback-absorbing-set-Dp-NS}.

Under \ref{item:H-a}, Remarks
\ref{rem:averaged-coefficients-NS}(ii) and \ref{rem:bar}
give the same bounds for \(\bar f\) and \(\bar g\).
Applying the argument yielding \eqref{97:11} on \([0,t]\), we obtain
\[
\E\norm{\bar u(t)}^{2p}
\le
e^{-\alpha_p t}M_K+\frac{C_p}{\alpha_p}
\]
whenever \(\Law{\bar u(0)}\in K\). Thus
\(\Law{\bar u(t)}\in B_{R,p}\) for every \(t\ge T_K\), and
\(B_{R,p}\) is also absorbing for \eqref{eq:SPDEtwo}.
\end{proof}

\begin{lem}
\label{lem:pullback-asymptotic-compactness-NS}
Assume that \ref{item:H-f}, \ref{item:H-g}, and
\ref{item:H-d} hold with $2\nu\lambda_1>c_1+c_2$.
Fix $0<\var\le1$, $F\in\cal H(F_0)$, and $K\in\cal D_p$.
Let $t_n>0$ satisfy $t_n\to\infty$.
For each $n$, let $u_n$ be any solution of \eqref{eq:SPDEone}
on $[0,t_n]$ with coefficients $\sigma_{-t_n/\var}F$ and
$\Law{u_n(0)}=\mu_n\in K$.
Then $\{\Law{u_n(t_n)}:n\ge1\}$ is relatively compact in
$(\cal P_2(H),W_2)$.

If \ref{item:H-a} also holds, the same conclusion holds
for the averaged equation \eqref{eq:SPDEtwo}.
\end{lem}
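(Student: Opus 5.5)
The plan is to obtain, uniformly in $n$ (for $n$ large), a bound on the $V$-moment and on the $2p$-moment of $\Law{u_n(t_n)}$, and then to conclude as in the argument leading to \eqref{eq:initial-W2-limit-NS}. Tightness in $H$ will come from the compact embedding $V\hookrightarrow H$ and Chebyshev's inequality, uniform integrability of $\norm{x}^2$ from the $2p$-bound with $p>1$, and relative compactness in $(\cal P_2(H),W_2)$ from Prokhorov's theorem together with \cite[Theorem 6.9]{villani2009optimal}. All constants in the estimates below depend only on the structural constants in \ref{item:H-f}, \ref{item:H-g}, \ref{item:H-d}. By Remark \ref{rem:shift-uniform-first}(iii) these constants are the same for every shifted coefficient $\sigma_{-t_n/\var}F\in\cal H(F_0)$, so the time shift plays no role.

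\emph{Step 1 ($2p$-moment).} Since $K\in\cal D_p$, set $M_K:=\sup_{\mu\in K}\int\norm{x}^{2p}d\mu<\infty$. The differential inequality \eqref{eq:pullback-higher-moment-differential-NS} holds for $u_n$ with the same $\alpha_p,C_p$, exactly as in the proof of Lemma \ref{lem:pullback-absorbing-set-NS}. It gives
\[
\sup_{0\le t\le t_n}\E\norm{u_n(t)}^{2p}\le M_K+C_p/\alpha_p .
\]
Together with Lemma \ref{lem:energy-dissipativity-NS} (and $\E\norm{u_n(0)}^2\le 1+M_K$), this also yields a uniform bound on $\sup_t\E\norm{u_n(t)}^2$.

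\emph{Step 2 ($V$-moment at time $t_n$).} This is the main point, since the initial laws in $K$ need not have finite $V$-moments, so Lemma \ref{lemtwo} cannot be applied from time $0$. Take $n$ with $t_n\ge 2$. Integrating \eqref{37:2} over $[t_n-1,t_n]$ and using the uniform second-moment bound from Step 1 gives
\[
\int_{t_n-1}^{t_n}\E\norm{u_n(r)}_V^2\,dr\le C
\]
uniformly in $n$. Hence there is $s_n\in[t_n-1,t_n]$ with $\E\norm{u_n(s_n)}_V^2\le C$. We then apply Lemma \ref{lemtwo} with exponent $1$ on $[s_n,s_n+1]$ (shifted as in Remark \ref{rem:shift-uniform-first}(ii)); its constant is independent of the coefficient within the hull and of the solution. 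This gives $\E\norm{u_n(t_n)}_V^2\le C(1+C)$. This is precisely the smoothing argument behind \eqref{eq:pullback-V-bound-NS}. The delicate points are that the restriction of $u_n$ to $[s_n,t_n]$ is itself a variational martingale solution starting from $u_n(s_n)\in L^2(\Omega,\cal F_{s_n};V)$, and that the constant in Lemma \ref{lemtwo} is uniform in the time shift.

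\emph{Step 3 (conclusion).} With $\sup_n\E(\norm{u_n(t_n)}_V^2+\norm{u_n(t_n)}^{2p})<\infty$, the balls $\{\norm{x}_V\le M\}$ are compact in $H$, and Chebyshev's inequality gives tightness of $\{\Law{u_n(t_n)}\}$ in $\cal P(H)$. Moreover
\[
\sup_n\int_{\{\norm{x}>M\}}\norm{x}^2\,d\Law{u_n(t_n)}\le CM^{-2(p-1)}\to0 .
\]
Hence every weakly convergent subsequence converges in $W_2$, and the family is relatively compact in $(\cal P_2(H),W_2)$.

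For the averaged equation, Remarks \ref{rem:averaged-coefficients-NS}(ii) and \ref{rem:bar} provide the same structural bounds for $\bar f,\bar g$, together with Lemmas \ref{lemtwo} and \ref{lem:energy-dissipativity-NS} for $\bar u$, so the identical argument applies.
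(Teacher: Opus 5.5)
Your proposal is correct and follows the paper's proof essentially step by step: a uniform $2p$-moment bound from \eqref{eq:pullback-higher-moment-differential-NS}; a $V$-moment bound at $t_n$, obtained by integrating \eqref{37:2} to find a good time $s_n\in[t_n-1,t_n]$ and then applying Lemma \ref{lemtwo}; and finally tightness plus uniform integrability as in \eqref{eq:initial-W2-limit-NS} (the paper simply proves the $V$-bound before the $2p$-bound). One small slip: $u_n$ is only defined on $[0,t_n]$, so Lemma \ref{lemtwo} should be applied on $[s_n,t_n]$, as the paper does, rather than on $[s_n,s_n+1]$; this interval has length at most $1$, so its constant is bounded by the one for $T=1$.
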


\begin{proof}
Removing finitely many terms and relabeling if necessary,
we may assume $t_n\ge1$ for every $n\ge1$.
Set 
\[M_K:=\sup_{\mu\in K}\int_H\norm{x}^{2p}\,\mu(dx)<\infty.\]
By Lemma \ref{lem:energy-dissipativity-NS},
\[
\sup_{n\ge1}\sup_{0\le r\le t_n}\E\norm{u_n(r)}^2\le C_K.
\]
Integrating \eqref{37:2} over \([t_n-1,t_n]\) gives
\[
\sup_{n\ge1}\int_{t_n-1}^{t_n}\E\norm{u_n(r)}_V^2\,dr\le C_K.
\]
Hence, there exists \(s_n\in[t_n-1,t_n]\) such that
\[\E\norm{u_n(s_n)}_V^2\le C_K.\] 
Applying Lemma \ref{lemtwo} on \([s_n,t_n]\), we obtain
\begin{equation}
\label{eq:pullback-compactness-terminal-V-bound-NS}
\sup_{n\ge1}\E\norm{u_n(t_n)}_V^2<\infty.
\end{equation}
Applying the argument yielding \eqref{97:11} on $[0,t_n]$
with coefficients $\sigma_{-t_n/\var}F$, we have
\[
\E\norm{u_n(t_n)}^{2p}
\le
e^{-\alpha_p t_n}
\int_H\norm{x}^{2p}\,\mu_n(dx)
+\frac{C_p}{\alpha_p}
\le
e^{-\alpha_p t_n}M_K+\frac{C_p}{\alpha_p}.
\]
Thus,
\begin{equation}
\label{eq:pullback-compactness-terminal-H2p-bound-NS}
\sup_{n\ge1}\E\norm{u_n(t_n)}^{2p}<\infty.
\end{equation}
By \eqref{eq:pullback-compactness-terminal-V-bound-NS},
\eqref{eq:pullback-compactness-terminal-H2p-bound-NS}, and
the compactness argument yielding \eqref{eq:initial-W2-limit-NS},
$\{\Law{u_n(t_n)}:n\ge1\}$ is relatively compact in
$(\cal P_2(H),W_2)$.

Under \ref{item:H-a}, Remarks
\ref{rem:averaged-coefficients-NS}(ii) and \ref{rem:bar}
give the same coefficient bounds for $\bar f,\bar g$.
The arguments yielding
\eqref{eq:pullback-compactness-terminal-V-bound-NS},
\eqref{eq:pullback-compactness-terminal-H2p-bound-NS}, and
\eqref{eq:initial-W2-limit-NS} therefore apply to
\eqref{eq:SPDEtwo}.
\end{proof}

\begin{prop}
\label{thm:weak-pullback-attractor-existence-NS}
Assume that \ref{item:H-f}, \ref{item:H-g}, and
\ref{item:H-d} hold with $2\nu\lambda_1>c_1+c_2$.
For every $0<\var\le1$, equation \eqref{eq:SPDEone} admits a
weak pullback attractor
$\{\cal A^\var(F)\}_{F\in\cal H(F_0)}$
with respect to $\cal D_p$.
More precisely,
\begin{equation}
\label{eq:NS-weak-pullback-attractor-definition}
\cal A^\var(F)
:=
\bigcap_{\tau>0}
\overline{
\bigcup_{t\ge\tau}
\left\{
\Law{u_\var(0)}:
\Law{u_\var(-t)}\in B_{R,p}
\right\}
}^{\,d_{BL}},
\qquad F\in\cal H(F_0),
\end{equation}
where, for each \(t\ge\tau\), the set is taken over all
solutions \(u_\var\) of \eqref{eq:SPDEone} on \([-t,0]\) with
coefficients \(F\), and \(B_{R,p}\) is defined in
Lemma \ref{lem:pullback-absorbing-set-NS}.
\end{prop}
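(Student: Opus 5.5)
The plan is to verify the three properties of Definition \ref{def:weak-pullback-attractor-NS} directly for the omega-limit set defined in \eqref{eq:NS-weak-pullback-attractor-definition}, following the standard scheme for pullback attractors: absorption plus asymptotic compactness. Fix \(0<\var\le1\) and \(F\in\cal H(F_0)\). As a preliminary, I would note that a solution on \([-t,0]\) with coefficients \(F\) corresponds, after the shift \(r\mapsto r+t\), to a solution on \([0,t]\) with coefficients \(\sigma_{-t/\var}F\). Lemma \ref{lem:pullback-asymptotic-compactness-NS} then applies with \(K=B_{R,p}\in\cal D_p\). This gives the key fact: for any \(t_n\to\infty\) and any solutions \(u_n\) on \([-t_n,0]\) with \(\Law{u_n(-t_n)}\in B_{R,p}\), the sequence \(\{\Law{u_n(0)}\}\) is relatively compact in \((\cal P_2(H),W_2)\), hence also in \(d_{BL}\), since \(W_2\)-convergence implies weak convergence.

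\emph{Step 1 (nonemptiness and compactness).} I would show that \(\cal A^\var(F)\) equals the set of all \(d_{BL}\)-limits of sequences \(\Law{u_n(0)}\) with \(t_n\to\infty\) and \(\Law{u_n(-t_n)}\in B_{R,p}\). Nonemptiness follows by taking \(t_n=n\) with zero initial data, which exist by Remark \ref{rem:averaged-coefficients-NS}(iii) and have \(\delta_0\in B_{R,p}\), and then applying the key fact. The set is closed as an intersection of closed sets. For compactness, take \(\nu_k\in\cal A^\var(F)\). For each \(k\), pick \(\mu_k=\Law{u_k(0)}\) with \(t_k\ge k\) and \(d_{BL}(\mu_k,\nu_k)<1/k\). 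The key fact gives a convergent subsequence of \(\mu_k\), hence of \(\nu_k\), and the limit lies in \(\cal A^\var(F)\).

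\emph{Step 2 (attraction).} Fix \(K\in\cal D_p\) and suppose attraction fails. Then there are \(\delta>0\), \(t_n\to\infty\), and solutions \(u_n\) on \([-t_n,0]\) with \(\Law{u_n(-t_n)}\in K\) and \(\inf_{\nu\in\cal A^\var(F)}d_{BL}(\Law{u_n(0)},\nu)\ge\delta\). I would use \(T_K\) from Lemma \ref{lem:pullback-absorbing-set-NS}, but apply the absorption estimate \eqref{97:11} at the intermediate time \(-t_n+T_K\), with the shifted coefficients. This gives \(\Law{u_n(-t_n+T_K)}\in B_{R,p}\). The restriction of \(u_n\) to \([-t_n+T_K,0]\) is again a variational martingale solution, with initial time \(-(t_n-T_K)\to-\infty\). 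So \(\Law{u_n(0)}\) belongs to the union in \eqref{eq:NS-weak-pullback-attractor-definition} for every \(\tau\le t_n-T_K\). The key fact yields a subsequential limit, which by Step 1 lies in \(\cal A^\var(F)\). This contradicts the choice of \(\delta\).

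The step I expect to be the main obstacle is this restriction-and-absorption argument. The absorbing estimate in Lemma \ref{lem:pullback-absorbing-set-NS} is stated for pullback at time \(0\), so I must check that it is uniform over shifted coefficients in \(\cal H(F_0)\). This holds because the constants \(\alpha_p,C_p\) depend only on the uniform bounds, as Remark \ref{rem:shift-uniform-first}(iii) guarantees.

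\emph{Step 3 (minimality).} Let \(\cal C\) be \(d_{BL}\)-closed and satisfy (ii) for \(F\). Take \(K=B_{R,p}\) in (ii) and let \(\nu\in\cal A^\var(F)\) be the limit of \(\Law{u_n(0)}\) with \(t_n\to\infty\). Then \(\inf_{c\in\cal C}d_{BL}(\Law{u_n(0)},c)\to0\), so \(\mathrm{dist}(\nu,\cal C)=0\), and \(\nu\in\cal C\) by closedness.
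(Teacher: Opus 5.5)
Your proposal is correct and follows essentially the same route as the paper. Both arguments use the sequential characterization of $\cal A^\var(F)$, nonemptiness and $d_{BL}$-compactness via Lemma~\ref{lem:pullback-asymptotic-compactness-NS}, attraction by contradiction using absorption into $B_{R,p}$ at time $-t_n+T_K$ (with $T_K$ uniform over the hull), and minimality from the attraction of $B_{R,p}\in\cal D_p$; the only cosmetic difference is that you apply the compactness lemma to the restricted solutions started in $B_{R,p}$, whereas the paper applies it to the original solutions started in $K$.
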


\begin{proof}
Fix \(0<\var\le1\) and \(F\in\cal H(F_0)\). By
\eqref{eq:NS-weak-pullback-attractor-definition},
\(\nu\in\cal A^\var(F)\) if and only if there exist \(t_n\to\infty\)
and solutions \(u_n\) on \([-t_n,0]\) with coefficients \(F\) and
\(\Law{u_n(-t_n)}\in B_{R,p}\) such that
\begin{equation}
\label{eq:NS-weak-attractor-characterization}
d_{BL}\bigl(\Law{u_n(0)},\nu\bigr)\longrightarrow0.
\end{equation}
Indeed, if \(\nu\in\cal A^\var(F)\), then for every \(n\) we may choose
\(t_n\ge n\) and such a solution satisfying
\[d_{BL}(\Law{u_n(0)},\nu)<\frac{1}{n};\]
the converse follows from
\eqref{eq:NS-weak-pullback-attractor-definition}.

For each \(n\in\N\), choose a solution \(u_n\) on \([-n,0]\) with
coefficients \(F\) and \(\Law{u_n(-n)}=\delta_0\). By Lemma
\ref{lem:pullback-asymptotic-compactness-NS}, the sequence
\(\{\Law{u_n(0)}\}\) has a subsequence converging in \(W_2\).
Since \(\delta_0\in B_{R,p}\), \eqref{eq:NS-weak-attractor-characterization}
implies \(\cal A^\var(F)\ne\emptyset\). 

Let \(\nu_n\in\cal A^\var(F)\). By
\eqref{eq:NS-weak-attractor-characterization}, for every \(n\) there
exist \(t_n\ge n\) and a solution \(u_n\) on \([-t_n,0]\) with
coefficients \(F\) and \(\Law{u_n(-t_n)}\in B_{R,p}\) such that
\[d_{BL}(\Law{u_n(0)},\nu_n)\le\frac{1}{n}.\]
Lemma \ref{lem:pullback-asymptotic-compactness-NS} gives, after passing to a
subsequence,
\[
W_2\bigl(\Law{u_n(0)},\nu\bigr)\longrightarrow0
\]
for some \(\nu\in\cal P_2(H)\). Hence
\[
d_{BL}(\nu_n,\nu)
\le
\frac1n+d_{BL}\bigl(\Law{u_n(0)},\nu\bigr)
\longrightarrow0.
\]
Since \(t_n\to\infty\) and
\(\Law{u_n(-t_n)}\in B_{R,p}\),
\eqref{eq:NS-weak-attractor-characterization} gives
\(\nu\in\cal A^\var(F)\). Thus \(\cal A^\var(F)\) is compact in the
\(d_{BL}\)-topology.

Let \(K\in\cal D_p\) be nonempty. If pullback attraction fails, there
exist \(\eta_0>0\), \(t_n\to\infty\), and solutions \(u_n\) on
\([-t_n,0]\) with coefficients \(F\) and
\(\Law{u_n(-t_n)}\in K\) such that
\begin{equation}
\label{eq:NS-weak-attraction-contradiction}
\inf_{\nu\in\cal A^\var(F)}
d_{BL}\bigl(\Law{u_n(0)},\nu\bigr)\ge\eta_0.
\end{equation}
By Lemma \ref{lem:pullback-asymptotic-compactness-NS}, after passing
to a subsequence,
\begin{equation}
\label{98:1}
W_2\bigl(\Law{u_n(0)},\nu_0\bigr)\longrightarrow0
\end{equation}
for some \(\nu_0\in\cal P_2(H)\). Let \(T_K\) be given by Lemma
\ref{lem:pullback-absorbing-set-NS}. 
For sufficiently large \(n\), \(t_n>T_K\). By Lemma
\ref{lem:pullback-absorbing-set-NS},
\[
\Law{u_n(-t_n+T_K)}\in B_{R,p},
\]
since the translated coefficients still belong to
\(\cal H(F_0)\) and \(T_K\) is independent of \(F\).
As \(t_n-T_K\to\infty\), by \eqref{eq:NS-weak-attractor-characterization}
and \eqref{98:1}, we have 
\(\nu_0\in\cal A^\var(F)\), contradicting
\eqref{eq:NS-weak-attraction-contradiction}.
Therefore,
\(\cal A^\var(F)\) pullback attracts every \(K\in\cal D_p\).

Finally, every \(d_{BL}\)-closed set satisfying Definition
\ref{def:weak-pullback-attractor-NS}\textup{(ii)} for \(F\) attracts
\(B_{R,p}\), since \(B_{R,p}\in\cal D_p\). For any
\(\nu\in\cal A^\var(F)\), choose \(t_n\to\infty\) and \(u_n\) as in
\eqref{eq:NS-weak-attractor-characterization}. By the attraction of
\(B_{R,p}\), 
\(d_{BL}(\Law{u_n(0)},\nu)\to0\) and the \(d_{BL}\)-closedness,
we obtain that this set contains \(\nu\). Thus it contains
\(\cal A^\var(F)\), which proves the minimality.
\end{proof}

\begin{rem}
\label{A-aver}
Assume that \ref{item:H-f}--\ref{item:H-d} hold with
\(2\nu\lambda_1>c_1+c_2\).
By Remarks \ref{rem:averaged-coefficients-NS}(ii) and
\ref{rem:bar}, Proposition
\ref{thm:weak-pullback-attractor-existence-NS} applies to
\eqref{eq:SPDEtwo}. Hence \eqref{eq:SPDEtwo} admits a weak global
attractor with respect to \(\cal D_p\), given by
\[
\bar{\cal A}
:=
\bigcap_{\tau>0}
\overline{
\bigcup_{t\ge\tau}
\left\{
\Law{\bar u(t)}:
\Law{\bar u(0)}\in B_{R,p}
\right\}
}^{\,d_{BL}},
\]
where, for each \(t\ge\tau\), the set is taken over all solutions
\(\bar u\) of \eqref{eq:SPDEtwo} on \([0,t]\), and the closure is
taken in \(\cal P_2(H)\). Moreover, \(\bar{\cal A}\) attracts every
\(K\in\cal D_p\). In particular,
\[
\lim_{t\to\infty}
\sup_{\bar u}
\inf_{\nu\in\bar{\cal A}}
d_{BL}\bigl(\Law{\bar u(t)},\nu\bigr)=0,
\]
where the supremum is taken over all solutions \(\bar u\) on
\([0,t]\) with \(\Law{\bar u(0)}\in B_{R,p}\).
\end{rem}

\begin{proof}[Proof of Theorem
\ref{thm:global-weak-attractor-averaging-NS}]
Assertions \textup{(1)} and \textup{(2)} follow from Proposition
\ref{thm:weak-pullback-attractor-existence-NS} and Remark
\ref{A-aver}.

Suppose that assertion \textup{(3)} is false. Then there exist
\(\eta_0>0\), \(\var_n\to0\), \(F_n\in\cal H(F_0)\), and
\(\nu_n\in\cal A^{\var_n}(F_n)\) such that
\[
\inf_{\nu\in\bar{\cal A}}d_{BL}(\nu_n,\nu)\ge\eta_0,
\qquad n\ge1.
\]
By Remark \ref{A-aver}, we choose \(T>0\) such that
\[
\sup_{\bar u}\inf_{\nu\in\bar{\cal A}}
d_{BL}\bigl(\Law{\bar u(T)},\nu\bigr)<\frac{\eta_0}{2},
\]
where the supremum is taken over all solutions of
\eqref{eq:SPDEtwo} on \([0,T]\) with
\(\Law{\bar u(0)}\in B_{R,p}\).

Let \(T_K\) be the absorbing time for \(K=B_{R,p}\) given by
Lemma \ref{lem:pullback-absorbing-set-NS}. From
\eqref{eq:NS-weak-attractor-characterization}, for every \(n\) we
choose \(t_n\ge T+T_K+n\) and a solution \(u_n\) of
\eqref{eq:SPDEone} on \([-t_n,0]\) with coefficients \(F_n\) such that
\begin{align}\label{914:1}
\Law{u_n(-t_n)}\in B_{R,p},
\qquad
d_{BL}\bigl(\Law{u_n(0)},\nu_n\bigr)\le\frac1n.
\end{align}
As \(t_n-T\ge T_K\) and
\(\sigma_{-T/\var_n}F_n\in\cal H(F_0)\), Lemma
\ref{lem:pullback-absorbing-set-NS} yields
\begin{align}\label{914:3}
\Law{u_n(-T)}\in B_{R,p}.
\end{align}
Applying Lemma \ref{lem:energy-dissipativity-NS} and the argument
yielding \eqref{eq:pullback-compactness-terminal-V-bound-NS}, we obtain
\[
\sup_{n\ge1}\E\norm{u_n(-T)}_V^2<\infty.
\]
Together with \eqref{914:3}, the argument yielding
\eqref{eq:initial-W2-limit-NS} shows that 
\(\overline{\{\Law{u_n(-T)}:n\ge1\}}^{W_2}\) is compact. By weak lower semicontinuity,
\[
\int_H\norm{x}^{2p}\,\mu(dx)\le R_p,\quad\text{for every } \mu\in\overline{\{\Law{u_n(-T)}:n\ge1\}}^{W_2}.
\]

By Remark \ref{rem:shift-uniform-first}(iii),
\(\sigma_{-T/\var_n}F_n\) satisfies
\ref{item:H-f}--\ref{item:H-d}. We may therefore
apply the arguments yielding
\eqref{eq:compact-averaging-uniform-estimates-NS} and
\eqref{eq:compact-averaging-residual-limit-NS}, together with the
limit identification leading to
\eqref{eq:thmT-limit-covariations-NS}, to obtain, after passing to a
subsequence,
\[
\Law{u_n(-T+\cdot)}
\Rightarrow
\Law{\bar u}
\quad\text{in }\cal P(C([0,T];H)),
\]
where \(\bar u\) is a variational martingale solution of
\eqref{eq:SPDEtwo}. Since \(\Law{u_n(-T)}\in B_{R,p}\), by continuity
of evaluation at \(0\) and weak lower semicontinuity, we have
\[\Law{\bar u(0)}\in B_{R,p}.\]

Evaluating at \(T\), we obtain
\begin{align}\label{914:2}
d_{BL}\bigl(\Law{u_n(0)},\Law{\bar u(T)}\bigr)\longrightarrow0.
\end{align}
By \eqref{914:1} and \eqref{914:2}, we get
\[
\eta_0
\le
\limsup_{n\to\infty}
\inf_{\nu\in\bar{\cal A}}d_{BL}(\nu_n,\nu)
\le
\inf_{\nu\in\bar{\cal A}}
d_{BL}\bigl(\Law{\bar u(T)},\nu\bigr)
<
\frac{\eta_0}{2},
\]
a contradiction. Thus assertion \textup{(3)} holds.
\end{proof}

\appendix
\section{}

\begin{lem}
\label{lem:mild-solution}
Assume that \ref{item:H-f} and \ref{item:H-g} hold.
Let $p>1$ and $\mu_0\in\cal P_2(H)$ satisfy
\[\int_H\norm{x}^{2p}\,\mu_0(dx)<\infty.\]
For every $s\in\R$ and $0<\var\le1$, equation
\eqref{eq:SPDEone} admits a variational martingale solution
on $[s,\infty)$ with initial law $\mu_0$ at time $s$.

If \ref{item:H-a} also holds, the same conclusion holds for
\eqref{eq:SPDEtwo}.
\end{lem}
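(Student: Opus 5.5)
We construct solutions by a Galerkin scheme combined with a freezing of the law, and then pass to the limit by the tightness and martingale identification machinery already used in Section 3. Fix $s\in\R$, $0<\var\le1$, and a horizon $T>0$; a solution on $[s,\infty)$ will then be obtained by consistency over $T\in\N$. We treat only \eqref{eq:SPDEone}, since for \eqref{eq:SPDEtwo} Remark \ref{rem:averaged-coefficients-NS}(ii) supplies the same coefficient bounds.

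\emph{Step 1: Galerkin approximation.} Let $P_m$ be the orthogonal projection onto $\mathrm{span}\{e_1,\dots,e_m\}$ of Stokes eigenfunctions. Take $\xi$ with law $\mu_0$ and consider the finite-dimensional McKean--Vlasov SDE
\[
du_m+\left[\nu Au_m+P_mB(u_m,u_m)\right]dt
=P_mf\Bigl(\tfrac t\var,u_m,\Law{u_m(t)}\Bigr)dt+P_mg\Bigl(\tfrac t\var,u_m,\Law{u_m(t)}\Bigr)dW_t
\]
with $u_m(s)=P_m\xi$. The drift is only locally Lipschitz because of $B$, but it is globally Lipschitz in the measure argument in $W_2$ by \ref{item:H-f}--\ref{item:H-g}(i). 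We obtain a unique strong solution in two stages. First, for a fixed flow $\mu_\cdot\in C([s,T];\cal P_2)$, the truncated SDE has a unique strong solution, and it does not explode because of \eqref{BzeroH} and \eqref{gcon}. Second, we apply a fixed point argument for the map $\mu_\cdot\mapsto\Law{u^\mu_\cdot}$. The contraction estimate is the delicate point, since the nonlinearity prevents global Lipschitz bounds. We plan to handle it by localizing with stopping times $\tau_N$ and controlling the probability of the complement through the moment bound. Alternatively, we can use the weighted energy $e^{-C\int_s^t\norm{u_m}_V^2}\norm{u^\mu-u^{\tilde\mu}}^2$, which in finite dimensions satisfies $\norm{u}_V\le C_m\norm{u}$; the weight absorbs the term $b(w,u,w)$ by \eqref{bloc}. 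This gives a contraction on short intervals, which we iterate.

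\emph{Step 2: Uniform bounds and tightness.} The It\^o computation of Lemma \ref{lemone} applies verbatim to $u_m$, since $P_m$ is an orthogonal projection and $\inpro{P_mB(u_m,u_m),u_m}=0$. It gives
\[
\sup_m\Bigl[\E\sup_{[s,T]}\norm{u_m}^{2p}+\E\int_s^T\norm{u_m}^{2p-2}\norm{u_m}_V^2\,dt\Bigr]\le C_T\Bigl(1+\int\norm{x}^{2p}\,\mu_0(dx)\Bigr).
\]
Because we only have $H$-moments of order $2p$, the $V$-regularity route of Theorem \ref{thmT} is not available. Instead we follow Flandoli--Gatarek and prove tightness in $L^2(s,T;H)\cap C([s,T];V^*)$, and in $C([s,T];H_w)$ via Aldous' criterion. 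For this we use the bound $\norm{B(u,u)}_{D(A)^*}\le C\norm{u}^2$ in 2D, together with a fractional Sobolev estimate of the stochastic integral in $W^{\alpha,2p}(s,T;H)$. We then apply Skorokhod's theorem to $(u_m,W)$.

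\emph{Step 3: Identification of the limit.} Since $p>1$, the $2p$-moment bound gives uniform integrability of $\sup\norm{u_m}^2$ and of $\int\norm{u_m}^2$. Combined with strong $L^2(s,T;H)$ convergence, this yields $W_2(\Law{u_m(t)},\Law{u(t)})\to0$ for a.e.\ $t$. That is enough to pass to the limit in the integrated coefficients, by the Lipschitz bounds in \ref{item:H-f}--\ref{item:H-g}(i). The nonlinear term converges by \eqref{eq:thmT-nonlinear-continuity-NS}. The martingale characterization of Theorem \ref{thmT}, tested against $e_j$ and with quadratic and cross variations, then produces a Wiener process $W^*$ and the variational identity.

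\emph{Step 4: Regularity of the limit.} Continuity in $H$ and the membership $u\in L^2(\Omega;C([s,T];H))$ follow from the It\^o formula in the Gelfand triple $V\subset H\subset V^*$, because $B(u,u)\in L^2(s,T;V^*)$ in 2D by Ladyzhenskaya's inequality. Finally we extend to $[s,\infty)$ through a consistent family over $T\in\N$, using a diagonal argument exactly as in Lemma \ref{lem:complete-law-NS}.

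The main obstacle is Step 1, the well-posedness of the Galerkin McKean--Vlasov system under only local Lipschitz drift. If the weighted-energy contraction fails, we will instead keep the measure argument frozen at the previous Picard iterate and obtain the fixed point by compactness, via Schauder's theorem on $C([s,T];\cal P_2)$ using the uniform moment bounds.
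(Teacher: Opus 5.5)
Your Steps 2--4 are essentially the paper's argument:
\begin{itemize}
\item uniform $2p$-moments from the It\^o computation of Lemma \ref{lemone};
\item tightness in $L^2(s,T;H)\cap C([s,T];V^*)$ and a joint Skorokhod representation with $W$;
\item $\int_s^T W_2^2(\Law{u_m(t)},\Law{u(t)})\,dt\to0$ by uniform integrability;
\item the martingale identification of Theorem \ref{thmT}, the variational It\^o formula for $H$-continuity, and a diagonal extraction in $T$.
\end{itemize}
The genuine difference is where the approximations come from. The paper does not construct them. It applies \cite[Proposition 3.6]{hong2024mckean} to the bounded data $nu_0/(n+\norm{u_0})$ on $[0,n]$. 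Its own work is removing the boundedness of the initial datum via the $2p$-moment bounds, then extending to $[0,\infty)$. You instead build Galerkin McKean--Vlasov solutions from $P_m\xi$ directly. That is self-contained and avoids truncating the initial law. But it moves all the difficulty into a finite-dimensional McKean--Vlasov SDE with only locally Lipschitz drift, which is exactly what the cited proposition handles.

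On that step, neither contraction argument you propose closes.
\begin{itemize}
\item Localization only gives $\E\norm{w(t)}^2\le C_Ne^{C_Nt}\int_s^tW_2^2(\mu_r,\tilde\mu_r)\,dr+\E\bigl[\norm{w(t)}^2\mathbf 1_{\{\tau_N<t\}}\bigr]$. The last term is an additive error, not proportional to $\sup_rW_2^2(\mu_r,\tilde\mu_r)$, so there is no Lipschitz constant below $1$.
\item The weighted energy controls only $\E\bigl[e^{-C\int_s^t\norm{u_m}_V^2dr}\norm{w(t)}^2\bigr]$. This does not dominate $W_2^2(\Law{u^\mu(t)},\Law{u^{\tilde\mu}(t)})\le\E\norm{w(t)}^2$ without exponential moments of $\int_s^t\norm{u_m}^2dr$, which $2p$-moments do not give.
\end{itemize}
So you genuinely need your fallback. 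The same localization does prove continuity of $\mu_\cdot\mapsto\Law{u^\mu}$ on $C([s,T];\cal P_2(P_mH))$. Uniform $2p$-moments plus a uniform time modulus give compactness, so Schauder--Tychonoff applies.

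A simpler route is to replace $P_mB(u,u)$ by $P_mB(\pi_mu,\pi_mu)$, where $\pi_m$ is the radial retraction onto the ball of radius $m$. This is globally Lipschitz on $P_mH$ and still orthogonal to $u$. Banach's fixed point then applies, and Lemma \ref{lemone} holds uniformly in $m$. The truncation disappears in the limit by uniform integrability of $\sup_t\norm{u_m(t)}^2$.

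Either way you get existence without uniqueness of the approximations. Your ``consistency over $T\in\N$'' is then no longer automatic. Build each approximation as a single process on $[s,\infty)$, e.g.\ by concatenation on the fixed basis, just as the paper's $u_n$ live on $[0,n]$.

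Finally, $\norm{B(u,u)}_{D(A)^*}\le C\norm{u}^2$ is false in 2D, because $H^2\not\hookrightarrow W^{1,\infty}$. Use $D(A^\beta)^*$ with $\beta>1$, or $\norm{B(u,u)}_{V^*}\le C\norm{u}\norm{u}_V$ as the paper does.
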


\begin{proof}
It suffices to consider $s=0$.
By \eqref{BzeroH}, \eqref{bloc}, \eqref{fcon}, \eqref{gcon},
and 
\[\norm{B(u,u)}_{V^*}\le C\norm{u}\norm{u}_V,\]
the coefficient hypotheses of
\cite[Proposition 3.6]{hong2024mckean} hold.
Choose $u_0$ with $\Law{u_0}=\mu_0$.
For each $n\in\N$, that proposition gives a solution $u_n$
on $[0,n]$ with initial law
$\Law{nu_0/(n+\norm{u_0})}$.
By Lemma \ref{lemone}, for every $T>0$,
\begin{equation}
\label{eq:existence-uniform-energy-NS}
\sup_{n\ge T}
\left[
\E\sup_{0\le t\le T}\norm{u_n(t)}^{2p}
+\E\int_0^T\norm{u_n(t)}_V^2\,dt
\right]
\le C_{T,p}\left(1+\E\norm{u_0}^{2p}\right).
\end{equation}
The bound for $B$ and \eqref{fcon} imply that the drift
parts are bounded in probability in $W^{1,2}(0,T;V^*)$.
By the Burkholder--Davis--Gundy inequality and \eqref{gcon},
the stochastic integral parts are bounded in probability in
$C^\gamma([0,T];H)$ for $\gamma\in(0,(p-1)/(2p))$.
Using the compactness argument in
\cite[Lemma 3.5]{hong2024mckean}, we can obtain tightness in
$L^2(0,T;H)\cap C([0,T];V^*)$.

After a joint Skorokhod representation with the Wiener processes,
a subsequence converges almost surely to $u_\var$ in this space.
By \eqref{eq:existence-uniform-energy-NS} and uniform integrability, we have
\begin{equation}
\label{eq:existence-state-law-convergence-NS}
\int_0^T
W_2^2\left(\Law{u_n(t)},\Law{u_\var(t)}\right)dt
\le
\E\int_0^T\norm{u_n(t)-u_\var(t)}^2\,dt
\longrightarrow0.
\end{equation}
By \ref{item:H-f}, \ref{item:H-g}(i),
\eqref{eq:thmT-nonlinear-continuity-NS}, and
\eqref{eq:existence-state-law-convergence-NS},
the argument used to get \eqref{eq:thmT-limit-covariations-NS}
identifies the limit, using $2p$-moments of the coordinate
martingales.
Weak lower semicontinuity and the localized variational
It\^o formula
\cite[Theorem 4.2.5]{liu2015stochastic}
give an $H$-continuous version satisfying
Definition \ref{def:variational-martingale-solution-NS},
with initial law $\mu_0$.
By a diagonal extraction over $T\in\N$, we get a solution on
$[0,\infty)$. Moreover, time translation gives the assertion for every $s\in\R$.
By \ref{item:H-a} and Remark
\ref{rem:averaged-coefficients-NS}(ii), we conclude that 
the lemma also holds for \eqref{eq:SPDEtwo}.
\end{proof}

We now give the proof of Lemma~\ref{lemtwo}. 

\begin{proof}[Proof of Lemma \ref{lemtwo}]
Let $\{e_k\}_{k\ge1}\subset D(A)$ be an orthonormal basis of $H$
consisting of eigenfunctions of the Stokes operator. Set
$H_n:=\mathrm{span}\{e_1,\dots,e_n\}$, and let $P_n:H\to H_n$
be the orthogonal projection. On the stochastic basis of $u_\var$,
consider
\[
\begin{cases}
\begin{aligned}
du_n(t)
&+\left[\nu Au_n(t)+P_nB(u_n(t),u_n(t))\right]dt\\
&=P_nf\left(\frac t\var,u_\var(t),\Law{u_\var(t)}\right)dt
+P_ng\left(\frac t\var,u_\var(t),\Law{u_\var(t)}\right)dW_t,
\end{aligned}\\
u_n(0)=P_nu_0.
\end{cases}
\]
By \eqref{eq:SPDEthree},
\[\Law{u_\var(t)}\in\cal P_2(V) \quad\text{for almost every } t\in[0,T].\]
Applying It\^o's formula to $\norm{u_n(t)}_V^2$, using
\eqref{BzeroV}, Young's inequality, and the same localization and
martingale estimates as in the derivation of \eqref{eq:SPDEthree}, we obtain
\begin{align}
\label{eq:V-Galerkin-regularity-NS}
&\E\sup_{0\le t\le T}\norm{u_n(t)}_V^2
+\E\int_0^T\norm{Au_n(t)}^2\,dt\notag\\
&\quad\le C\left[
\E\norm{u_0}_V^2+
\E\int_0^T\left(
\norm{f\left(\frac t\var,u_\var(t),\Law{u_\var(t)}\right)}^2
+\norm{g\left(\frac t\var,u_\var(t),\Law{u_\var(t)}\right)}_{L_2(U,V)}^2
\right)dt\right]\notag\\
&\quad\le C_T\left[
1+\E\norm{u_0}_V^2+\E\int_0^T\norm{u_\var(t)}_V^2\,dt
\right]
\le C_T\left(1+\E\norm{u_0}_V^2\right),
\end{align}
where the last two inequalities follow from \ref{item:H-f},
\ref{item:H-g}(ii), the Poincar\'e inequality, and
\eqref{eq:SPDEthree}. The constants in \eqref{eq:V-Galerkin-regularity-NS} are independent of $n$, $\var$,
and the choice of solution.

Subtracting the projection of \eqref{eq:SPDEone} from the equation
for $u_n$, then using
\eqref{bzero}, \eqref{bloc}, Young's inequality, and
$\norm{P_nu_\var}_V\le\norm{u_\var}_V$, we obtain
\begin{align}
\label{eq:V-Galerkin-comparison-NS}
&\frac{d}{dt}\norm{u_n(t)-P_nu_\var(t)}^2
+\nu\norm{u_n(t)-P_nu_\var(t)}_V^2\notag\\
&\quad\le C\norm{u_\var(t)}_V^2
\norm{u_n(t)-P_nu_\var(t)}^2
+C\norm{
B(P_nu_\var(t),P_nu_\var(t))-B(u_\var(t),u_\var(t))
}_{V^*}^2.
\end{align}
Since $u_\var\in C([0,T];H)$ almost surely,
$P_nu_\var\to u_\var$ in $C([0,T];H)$ almost surely. By
\eqref{bskew}, the two-dimensional interpolation inequality, and
\eqref{eq:SPDEthree},
\begin{align}
\label{912:1}
&\int_0^T\norm{
B(P_nu_\var(t),P_nu_\var(t))-B(u_\var(t),u_\var(t))
}_{V^*}^2\,dt\notag\\
&\quad\le C\sup_{0\le t\le T}\norm{P_nu_\var(t)-u_\var(t)}
\sup_{0\le t\le T}\norm{u_\var(t)}
\int_0^T\norm{u_\var(t)}_V^2\,dt
\longrightarrow0
\quad\text{almost surely}.
\end{align}
By $u_n(0)=P_nu_0$, \eqref{eq:V-Galerkin-comparison-NS},
\eqref{912:1}, and Gronwall's inequality,
\[u_n\to u_\var \quad\text{in } C([0,T];H) \quad\text{almost surely}.\]
Thus, \eqref{eq:V-Galerkin-regularity-NS}, weak lower
semicontinuity, and Fatou's lemma give
\begin{equation}
\label{eq:V-basic-regularity-NS}
\E\sup_{0\le t\le T}\norm{u_\var(t)}_V^2
+\E\int_0^T\norm{Au_\var(t)}^2\,dt
\le C_T\left(1+\E\norm{u_0}_V^2\right).
\end{equation}
By \eqref{eq:V-basic-regularity-NS}, \ref{item:H-f}, and
\[\norm{B(u_\var,u_\var)}^2
\le C\norm{u_\var}_V^3\norm{Au_\var},\]
the drift of \eqref{eq:SPDEone} belongs to $L^2(0,T;H)$ almost surely.
Using \ref{item:H-g}(ii) and applying the localized variational
It\^o formula to $A^{1/2}u_\var$, we get a version of
$u_\var$ in $C([0,T];V)$.

By Young's inequality, \ref{item:H-f}, \ref{item:H-g}(ii), and
the Poincar\'e inequality,
\begin{align}
2\inpro{
f\left(\frac t\var,u_\var(t),\Law{u_\var(t)}\right),Au_\var(t)}
&\le\frac{\nu}{2}\norm{Au_\var(t)}^2
+C\left(1+\norm{u_\var(t)}^2+\E\norm{u_\var(t)}^2\right)\notag\\
&\le\frac{\nu}{2}\norm{Au_\var(t)}^2
+C\left(1+\norm{u_\var(t)}_V^2+\E\norm{u_\var(t)}_V^2\right),
\label{32:2}\\
\norm{g\left(\frac t\var,u_\var(t),\Law{u_\var(t)}\right)}_{L_2(U,V)}^2
&\le C\left(1+\norm{u_\var(t)}_V^2+\E\norm{u_\var(t)}_V^2\right).
\label{32:3}
\end{align}
Applying It\^o's formula to $\norm{u_\var(t)}_V^{2p}$ and using
\eqref{BzeroV}, \eqref{32:2}, and \eqref{32:3}, we obtain
\begin{align}
\label{eq:V-energy-differential-NS}
&d\norm{u_\var(t)}_V^{2p}
+p\nu\norm{u_\var(t)}_V^{2p-2}\norm{Au_\var(t)}^2\,dt\notag\\
&\quad\le C_p\left(
1+\norm{u_\var(t)}_V^2+\E\norm{u_\var(t)}_V^2
\right)\norm{u_\var(t)}_V^{2p-2}\,dt\notag\\
&\qquad+
2p\norm{u_\var(t)}_V^{2p-2}
\inpro{
g\left(\frac t\var,u_\var(t),\Law{u_\var(t)}\right)dW_t,
u_\var(t)}_V.
\end{align}
Repeating the localization, martingale, and Gronwall estimates used
to derive \eqref{eq:SPDEthree} from \eqref{HitoNSineq}, and using
\eqref{eq:V-basic-regularity-NS} and Jensen's inequality, we obtain
\begin{align}
\label{eq:V-solution-estimate-NS}
&\E\sup_{0\le t\le T}\norm{u_\var(t)}_V^{2p}
+\E\int_0^T
\norm{u_\var(t)}_V^{2p-2}\norm{Au_\var(t)}^2\,dt\notag\\
&\quad\le C_{T,p}\left[
1+\E\norm{u_0}_V^{2p}
+\int_0^T\left(\E\norm{u_\var(t)}_V^2\right)^p\,dt
\right]
\le C_{T,p}\left(1+\E\norm{u_0}_V^{2p}\right).
\end{align}
Under \ref{item:H-a}, Remark
\ref{rem:averaged-coefficients-NS}(ii) gives the same coefficient
bounds for $\bar f$ and $\bar g$, so the derivation of
\eqref{eq:V-solution-estimate-NS} also applies to $\bar u$.
\end{proof}

\begin{exam}
\label{no-uniqueness}
Let \(e_1\in D(A)\) be a normalized Fourier eigenfunction of the
Stokes operator whose second component vanishes and which depends
only on \(x_2\).
Then
\[
\norm{e_1}=1,
\qquad
(e_1\cdot\nabla)e_1=0,
\qquad
B(e_1,e_1)=0.
\]
Define
\[
f(t,u,\mu)=\bar f(u,\mu)
:=
\nu\sin\left(\frac{\pi}{2}\inpro{u,e_1}\right)Ae_1,
\qquad
g(t,u,\mu)=\bar g(u,\mu):=0.
\]
Since \(f(t,0,\delta_0)=0\) and
\[
\begin{aligned}
\norm{f(t,u,\mu_1)-f(t,v,\mu_2)}
&\le
\frac{\pi\nu}{2}\norm{Ae_1}
\left|\inpro{u-v,e_1}\right|\\
&\le
\frac{\pi\nu}{2}\norm{Ae_1}\norm{u-v},
\end{aligned}
\]
condition \ref{item:H-f} is satisfied. Moreover,
\ref{item:H-g} holds with \(L_g=L_g'=0\), while the time independence
of \(f\) and \(g\) gives \ref{item:H-a} with
\(\omega^f=\omega^g=0\). Finally,
\[
2\inpro{f(t,u,\mu),u}
\le
2\nu\norm{Ae_1}\norm{u}
\le
\nu\lambda_1\norm{u}^2
+\frac{\nu}{\lambda_1}\norm{Ae_1}^2.
\]
Hence \ref{item:H-d} holds with
\[
c_0=\frac{\nu}{\lambda_1}\norm{Ae_1}^2,
\qquad
c_1=\nu\lambda_1,
\qquad
c_2=0,
\]
and therefore \(2\nu\lambda_1>c_1+c_2\).

Since
\[
\bar f(0,\mu)=0,
\qquad
\bar f(e_1,\mu)=\nu Ae_1,
\qquad
\bar f(-e_1,\mu)=-\nu Ae_1,
\]
and \(B(e_1,e_1)=B(-e_1,-e_1)=0\), the three constant processes
\[
\bar u(t)\equiv0,
\qquad
\bar u(t)\equiv e_1,
\qquad
\bar u(t)\equiv-e_1,
\qquad t\in\R,
\]
satisfy the variational identity for \eqref{eq:SPDEtwo}. Since
\(\bar g=0\), each of them is a complete variational martingale
solution of \eqref{eq:SPDEtwo} on any stochastic basis carrying a
cylindrical Wiener process on \(U\). For every \(p>1\),
\[
\sup_{t\in\R}
\E\left(
\norm{\bar u(t)}^2+
\norm{\bar u(t)}_V^2+
\norm{\bar u(t)}^{2p}
\right)
\le
2+\norm{e_1}_V^2<\infty.
\]
At \(t=0\), the three solutions have laws
\(\delta_0\), \(\delta_{e_1}\), and \(\delta_{-e_1}\), respectively.
Hence their complete solution laws are distinct, and
\eqref{eq:SPDEtwo} admits at least three bounded complete variational
solution laws satisfying \eqref{eq:thmR-averaged-bound-NS}.
\end{exam}

\section*{Acknowledgements}
This work is supported by National Key R\&D Program of China (No. 2023YFA1009200) and 
NSFC (Grants 12531009 and 11925102).

\end{document}